\documentclass{amsart}
\usepackage{amsrefs}

%Some standard packages.
\usepackage{graphicx}
\usepackage{epsfig}
\usepackage{mathptmx}
\usepackage{amssymb,amsfonts,mathrsfs,latexsym,textcomp}
\usepackage{amsmath}
\usepackage{dcolumn}

%A package to display eps images.
\usepackage{epstopdf}

\usepackage[english]{babel}
\usepackage{amssymb,amsmath,hyperref}
\usepackage{bbold}
\usepackage{stackrel}
\usepackage[foot]{amsaddr}
\usepackage{cleveref, enumerate}

\usepackage{transfer}

\newtheorem{theorem}{Theorem}
\newtheorem{cor}[theorem]{Corollary}
\newtheorem{definition}[theorem]{Definition}
\newtheorem{rem}[theorem]{Remark}
\newtheorem{nota}[theorem]{Notation}

\newtheorem{princ}[theorem]{Principle}

\newcommand\be{\begin{equation}}
\newcommand\ee{\end{equation}}

\newbox\gnBoxA
\newdimen\gnCornerHgt
\setbox\gnBoxA=\hbox{$\ulcorner$}
\global\gnCornerHgt=\ht\gnBoxA
\newdimen\gnArgHgt

\def\Godelnum #1{%
	\setbox\gnBoxA=\hbox{$#1$}%
	\gnArgHgt=\ht\gnBoxA%
	\ifnum \gnArgHgt<\gnCornerHgt
		\gnArgHgt=0pt%
	\else
		\advance \gnArgHgt by -\gnCornerHgt%
	\fi
	\raise\gnArgHgt\hbox{$\ulcorner$} \box\gnBoxA %
		\raise\gnArgHgt\hbox{$\urcorner$}}
\def\bnota{\begin{nota}\rm}
\def\enota{\end{nota}}
\def\brem{\begin{rem}\rm}
\def\erem{\end{rem}}

\def\ATR{\textup{\textsf{ATR}}}

\def\STP{\textup{\textsf{STP}}}

\def\ns{\textup{\textsf{ns}}}

\def\WKL{\textup{\textsf{WKL}}}

\def\T{\mathsf{T}}

\def\bye{\end{document}}

\def\P{\textup{\textsf{P}}}

\def\N{{\mathbb  N}}

\def\R{{\mathbb  R}}

\def\a{\mathbb{a}}

\def\GHU{\textup{\textsf{GHU}}}
\def\PCM{\textup{\textsf{PCM}}}
\def\NSC{\textup{\textsf{NPC}}}
\def\NPC{\textup{\textsf{NPC}}}
\def\MPC{\textup{\textsf{MPC}}}
\def\SCF{\textup{\textsf{SCF}}}
\def\GH{\textup{\textsf{GH}}}
\def\GHS{\textup{\textsf{GHS}}}

\def\MUC{\textup{\textsf{MUC}}}
\def\NUC{\textup{\textsf{NUC}}}

\def\EXT{\textup{\textsf{EXT}}}

\def\R{{\mathbb{R}}}
\def\({\textup{(}}
\def\){\textup{)}}

\def\st{\textup{st}}

\def\asa{\leftrightarrow}

\def\di{\rightarrow}

\def\eps{\varepsilon}

\def\paai{\Pi_{1}^{0}\textup{-\textsf{TRANS}}}

\def\sigtoe{\Sigma_{2}^{0}\textup{\textsf{-TRANS}}}

\def\QFAC{\textup{\textsf{QF-AC}}}

\def\HER{\textup{\textsf{HER}}}
\def\ST{\textup{\textsf{ST}}}

\def\GH{\textup{\textsf{GH}}}
\def\OCA{\Omega\textup{\textsf{-CA}}}

\def\MU{\textup{\textsf{MU}}}

\def\HAC{\textup{\textsf{HAC}}}

\def\INT{\textup{\textsf{int}}}

\setcounter{tocdepth}{3}
\numberwithin{equation}{section}
\numberwithin{theorem}{section}

\usepackage{comment}

\setcounter{tocdepth}{3}
\numberwithin{equation}{section}

\usepackage{comment, enumerate}

\def\BI{\textup{\textsf{BI}}}
\def\EBI{\textup{\textsf{EBI}}}

\begin{document}

\title[The Gandy-Hyland functional and Nonstandard Analysis]{The Gandy-Hyland functional and a computational aspect of Nonstandard Analysis}
%\author{Dag Normann}
%\address{Department of Mathematics, The University 
%of Oslo, P.O. Box 1053, Blindern N-0316 Oslo, Norway}
%\email{dnormann@math.uio.no}
\author{Sam Sanders}
\address{Munich Center for Mathematical Philosophy, LMU Munich, Germany \& Department of Mathematics, Ghent University, Belgium}
\email{sasander@me.com}

%\rhauthor{S.\ Sanders}
%\rhtitle{The Gandy-Hyland functional and Nonstandard Analysis}
%
%
%
%\title{The Gandy-Hyland functional and a computational aspect of Nonstandard Analysis}
%
%\author {Sam Sanders\\
%\affiliation{Department of Mathematics, Ghent University, Belgium}\\
% \affiliation{Munich Center for Mathematical Philosophy, LMU Munich, Germany\\
%{\tt sasanders@me.com}\\
%{\tt }}}

%\setcounter{page}{0}
%\tableofcontents
%\thispagestyle{empty}
%\newpage

\maketitle

%\begin{keywords}
%Gandy-Hyland functional, Nonstandard Analysis, higher-order computability theory
%\end{keywords}
%

\begin{abstract}
In this paper, we highlight a new \emph{computational} aspect of Nonstandard Analysis relating to higher-order computability theory. 
In particular, we prove that the \emph{Gandy-Hyland functional} equals a primitive recursive functional \emph{involving nonstandard numbers} inside Nelson's \emph{internal set theory}.     
From this classical and ineffective proof in Nonstandard Analysis, a term from G\"odel's system \textsf{T} can be extracted which computes the Gandy-Hyland functional in terms of a \emph{modulus-of-continuity functional} and a special case of the \emph{fan functional}.  
We obtain several similar relative computability results \emph{not involving Nonstandard Analysis} from their associated nonstandard theorems.  
By way of reversal, we show that certain relative computability results, called \emph{Herbrandisations}, also imply the nonstandard theorem from whence they were obtained.  
Thus, we establish a direct two-way connection between the field \emph{Computability} (in particular theoretical computer science) and the field \emph{Nonstandard Analysis}.      
\end{abstract}

\maketitle

\section{Introduction}\label{intro}
Our aim is to highlight a new \emph{computational} aspect of Nonstandard Analysis relating to (higher-order) computability theory. 
We study the \emph{Gandy-Hyland functional}, which was introduced in \cite{gandymahat} as an example of a higher-type functional 
not computable, in the sense of Kleene's S1-S9 (See \cite{noortje}*{1.10} or \cite{longmann}*{5.1.1}), in the \emph{fan functional} over the total continuous functionals (See \cite{noortje}*{4.61} or \cite{longmann}*{8.3.3}).   
The {Gandy-Hyland functional} $\Gamma$ is defined as follows:  
\begin{equation}\label{GH}\tag{\textsf{GH}}
(\exists \Gamma^{3})(\forall Y^{2}\in C, s^{0})\big[\Gamma(Y^{2},s^{0})= Y\big(s*0* (\lambda n^{0})\Gamma(Y, s*(n+1))\big)\big], 
\end{equation}
where `$Y^{2}\in C$' is the definition of continuity on Baire space as in \eqref{CB2};  All notations are introduced in Section \ref{TITI}.
\be\label{CB2}
(\forall f^{1})(\exists N^{0})(\forall g^{1})(\overline{f}N=_{0}\overline{g}N\di Y(f)=_{0}Y(g)).  
\ee
The functional $\Gamma$ from \eqref{GH} apparently exhibits non-well-founded self-reference:  Indeed, in order to compute $\Gamma$ at $s^{0}$, one needs the values of $\Gamma$ at all child nodes of $s^{0}$, as is clear from the right-hand side of \eqref{GH}.  
In turn, to compute the value of $\Gamma$ at the child nodes of $s$, one needs the value of $\Gamma$ at all grand-child nodes of $s$, and so on.  Hence, repeatedly applying the definition of $\Gamma$  seems to result in a non-terminating recursion.       
By contrast, \emph{primitive recursion} is well-founded as it reduces the case for $n+1$ to the case for $n$, and the case for $n=0$ is given.  

\medskip

In Section \ref{deballen}, we show that in Nelson's \emph{internal set theory} (See Section~\ref{P}), the \emph{primitive recursive}\footnote{The functional $G$ is primitive recursive \emph{in the sense of G\"odel's system} $\T$;  See Section \ref{TITI}.} functional 
\be\label{small}
G(Y,s,M)=
\begin{cases}
Y(s*00\dots) & |s|\geq M \\
Y(s*0* (\lambda n^{0})G(Y, s*(n+1),M)) & \textup{otherwise}   
\end{cases}
\ee
equals the $\Gamma$-functional from \eqref{GH} for standard input and any nonstandard number $M^{0}$.  
Note that one need only apply the definition of $G$ at most $M$ times to terminate in the first case of \eqref{small}.  In other words, the extra case `$|s|\geq M$' 
provides a nonstandard stopping condition which `unwinds' the non-terminating recursion in $\Gamma$ to the terminating one in $G$.  Or: one can trade in self-reference for nonstandard numbers.  
Thus, we shall refer to $G$ as the \emph{canonical approximation} of $\Gamma$.  

\medskip

We work in $\P$, a fragment of Nelson's \emph{internal set theory} based on G\"odel's $\textsf{T}$, both introduced in Section \ref{T}.  
The proof in Section \ref{drei} that $G(\cdot, M)$ and $\Gamma(\cdot)$ are equal for standard inputs and nonstandard $M^{0}$, 
takes place in \textsf{P} augmented with a nonstandard continuity axiom \ref{druk2} and a nonstandard bar induction axiom \ref{STP}.   
This is a natural setting for $\Gamma$, as it is modified bar recursion in disguise (\cite{bergolijf}*{\S4}).  

\medskip

From the aforementioned proof in $\P$ regarding $\Gamma$ and $G$, we show how to extract a term $t$ from G\"odel's $\textsf{T}$ and a proof in higher-order Peano arithmetic that $t$ 
computes the Gandy-Hyland functional in terms of a special case of the fan functional and %(originating from the bar induction axiom) and 
a modulus-of-continuity functional.  % (originating from the nonstandard continuity axiom).     
Conceptually, it is important to note that this final proof, as well as the term $t$, \emph{does not involve Nonstandard Analysis}, and that the extraction of the term $t$ from the proof proceeds via an algorithm.    
In Sections~\ref{laiba} to~\ref{quatre}, we obtain further nonstandard results from which we extract related relative computability results.  
In particular, the relative computability results in Section \ref{laiba} and \ref{ballen3} are `pointwise' versions of previous results in this paper, and these results witness the robustness of our approach.  Furthermore, in Section \ref{ballen3} and \ref{quatre}, we prove some \emph{Reverse Mathematics}$^{\ref{lotp}}$ results (both nonstandard and effective).  In particular, we work with \emph{associates} of continuous functionals in Section \ref{quatre}, leading to particularly elegant results.      

\medskip

Finally, it is a natural `Reverse Mathematics\footnote{For Friedman's foundational program \emph{Reverse Mathematics}, we refer the reader to Simpson's monograph \cite{simpson2}, which provides an excellent overview.\label{lotp}} style' question if from a relative computability result, obtained via Nonstandard Analysis, the `original' nonstandard theorem can be re-obtained.  
In answer to this question, we show in Section~\ref{relletje} that (a proof of) the original nonstandard theorem 
(that the Gandy-Hyland functional $\Gamma(\cdot)$ equals $G(\cdot, M)$ for all standard inputs and nonstandard $M^{0}$) follows from (a proof of) a certain natural relative computability result, called the \emph{Herbrandisation} 
of the original nonstandard theorem.  %In this way, the latter is seen to have the same computational content as its Herbrandisation.     

\medskip

In conclusion, while these relative computability results are not necessarily deep or surprising in and of themselves, 
\emph{the methodology by which we arrive at them} constitutes the real surprise of this paper, namely \emph{a new computational aspect of Nonstandard Analysis}:  
From a classical-logic proof in which no attention to computability is given at all, and in which Nonstandard Analysis is freely used, we obtain a relative computability result in a straightforward way.   
With some attention to detail, a natural relative computability result, called the Herbrandisation, allows us to re-obtain the original nonstandard theorem. 
%(from whence the Herbrandisation was derived).   
In this way, we establish a direct two-way connection between the field Computability (in particular theoretical computer science) and Nonstandard Analysis.      
As such, our results differ significantly from known applications of Nonstandard Analysis in Computability, as discussed in Section \ref{XYZ}.  

\section{About and around internal set theory}\label{T}
In this section, we introduce the system $\P$ in which we will work.  
In a nutshell, $\P$ is a conservative extension of G\"odel's system $\textsf{T}$ with certain axioms from Nelson's \emph{Internal Set Theory} $\IST$ (\cite{wownelly}) based on the approach from \cite{brie,bennosam}.      

\subsection{Nelson's internal set theory $\IST$}\label{P}
In this section, we discuss Nelson's \emph{internal set theory} $\IST$, first introduced in \cite{wownelly}.  %  and its fragments from \cite{brie}.  
In Nelson's \emph{syntactic} approach to Nonstandard Analysis, as opposed to Robinson's semantic one (\cite{robinson1}), a new unary predicate `st($x$)', read as `$x$ is standard', is added to the language of \textsf{ZFC}, the usual foundation of mathematics.  In other words, $L_{\IST}=L_{\ZFC}\cup\{\st\}$ is the language of $\IST$ if $L_{\ZFC}$ is the language of $\ZFC$.  The notations $(\forall^{\st}x)(\dots)$ and $(\exists^{\st}y)(\dots)$ are short for $(\forall x)(\st(x)\di \dots)$ and $(\exists y)(\st(y)\wedge \dots)$.  
A formula of $\IST$ is called \emph{internal} if it does not involve `st', and \emph{external} otherwise.   
The external axioms \emph{Idealisation}, \emph{Standard Part}, and \emph{Transfer} govern `st', and are defined\footnote{The superscript `fin' in \textsf{(I)} means that $x$ is `finite', i.e.\ the set is in one-to-one correspondence with some subset $\{0, 1, \dots, N\}$ of the natural numbers.  A finite set $x$ is standard in $\IST$ if (and only if) the associated number $N$ is, in light of \cite{wownelly}*{Theorem 1.1}.} as:
\begin{definition}[Axioms of $\IST$]~
\begin{enumerate}
\item[\textsf{\textup{(I)}}] $(\forall^{\st~\textup{fin}}x)(\exists y)(\forall z\in x)\varphi(z,y)\di (\exists y)(\forall^{\st}x)\varphi(x,y)$, for internal $\varphi$. 
%with any (possibly nonstandard) parameters.  
\item[\textsf{\textup{(S)}}] $(\forall x^{\st})(\exists^{\st}y)(\forall^{\st}z)\big((z\in x\wedge \varphi(z))\asa z\in y\big)$, for any formula $\varphi$.
\item[\textsf{\textup{(T)}}] $(\forall^{\st}t)\big[(\forall^{\st}x)\varphi(x, t)\di (\forall x)\varphi(x, t)\big]$, where $\varphi(x, t)$ is internal and only has free variables $x,t$.  
\end{enumerate}
\end{definition}
The system \textsf{IST} consist of the aforementioned three external axioms, plus the internal system \textsf{ZFC}, i.e. the latter does not involve `st'.
% extended with the aforementioned external axioms;  
Now, $\IST$ is a conservative extension of \textsf{ZFC} for the internal language $L_{\ZFC}$, as proved in \cite{wownelly}.    % i.e.\ without `st'. 
Of course, the step from $\textsf{ZFC}$ to $\IST$ can be done for \emph{fragments} of $\IST$ too; we shall make use the system $\P$, a fragment of $\IST$ based on G\"odel's system $\textsf{T}$, introduced in Section \ref{PIPI}.  
Before that, we briefly introduce system $\textsf{T}$ in Section \ref{TITI}.  The system $\P$ was first introduced in \cite{brie} and is exceptional in that it has a `term extraction procedure' \emph{with a very wide scope} (See Corollary \ref{consresultcor}).  We discuss this in more detail in Remark~\ref{firliborn}.        

\subsection{G\"odel's system $\textsf{T}$}\label{TITI}
In this section, we briefly introduce G\"odel's system $\textsf{T}$ and the associated systems $\textsf{E-PA}^{\omega}$ and $\textsf{E-PA}^{\omega*}$.  

\medskip

In his \emph{Dialectica} paper (\cite{godel3}), G\"odel defines an interpretation of intuitionistic arithmetic into a quantifier-free calculus of functionals.  This calculus is now known as `G\"odel's system \textsf{T}', and is essentially just primitive recursive arithmetic (\cite{buss}*{\S1.2.10}) with the schema of recursion expanded to \emph{all finite types}.  
Firstly, the set of finite types $\pmb{T}$ is:
\begin{center}
(i) $0\in \pmb{T}$   and   (ii)  If $\sigma, \tau\in \pmb{T}$ then $( \sigma \di \tau) \in \pmb{T}$,
\end{center}
where $0$ is the type of natural numbers, and $\sigma\di \tau$ is the type of mappings from objects of type $\sigma$ to objects of type $\tau$.  G\"odel's system $\textsf{T}$ includes `recursor' constants $\pmb{R}^{\rho}$ for every finite type $\rho\in \pmb{T}$, defining primitive recursion:  
\be\label{PR}\tag{\textsf{\textup{PR}}}
\pmb{R}^{\rho}(f,g, 0):=f   \textup{ and } \pmb{R}^{\rho}(f, g, n+1):=g(n, \pmb{R}^{\rho}(f, g, n)),
\ee
for $f^{\rho}$ and $g^{0\di( \rho\di \rho)}$.  %In particular, \eqref{PR} expresses higher-type primitive recursion.  
The system $\textsf{E-PA}^{\omega}$ is a combination of \emph{Peano arithmetic} and system $\T$, and the full axiom of extensionality \eqref{EXT}.  
The detailed definition of $\textsf{E-PA}^{\omega}$ may be found in \cite{kohlenbach3}*{\S3.3};  We do introduce the notion of equality and extensionality in $\textsf{E-PA}^{\omega}$, as these notions are needed below.
\begin{definition}[Equality]\label{FAK}
The system $\textsf{E-PA}^{\omega}$ includes equality between natural numbers `$=_{0}$' as a primitive.  Equality `$=_{\tau}$' for type $\tau$-objects $x,y$ is then:
\be\label{aparth}
[x=_{\tau}y] \equiv (\forall z_{1}^{\tau_{1}}\dots z_{k}^{\tau_{k}})[xz_{1}\dots z_{k}=_{0}yz_{1}\dots z_{k}]
\ee
if the type $\tau$ is composed as $\tau\equiv(\tau_{1}\di \dots\di \tau_{k}\di 0)$.  
The usual inequality predicate `$\leq_{0}$' between numbers has an obvious definition, and the predicate `$\leq_{\tau}$' is just `$=_{\tau}$' with `$=_{0}$' replaced by `$\leq_{0}$' in \eqref{aparth}.    
The \emph{axiom of extensionality} is the statement that for all $\rho, \tau\in \pmb{T}$:
\be\label{EXT}\tag{\textsf{E}}  
(\forall  x^{\rho},y^{\rho}, \varphi^{\rho\di \tau}) \big[x=_{\rho} y \di \varphi(x)=_{\tau}\varphi(y)   \big], 
\ee 
\end{definition}
Next, we introduce $\textsf{E-PA}^{\omega*}$, a definitional extension of $\textsf{E-PA}^{\omega}$ with a type for finite sequences; The set $\pmb{T}^{*}$ is:
\begin{center}
(i) $0\in \pmb{T}^{*}$,   (ii)  If $\sigma, \tau\in \pmb{T}^{*}$ then $ (\sigma \di \tau) \in \pmb{T}^{*}$, and (iii) If $\sigma \in \pmb{T}^{*}$, then $\sigma^{*}\in \pmb{T}^{*}$,
\end{center}
where $\sigma^{*}$ is the type of finite sequences of objects of type $\sigma$.  The system $\textsf{E-PA}^{\omega*}$ includes \eqref{PR} for all $\rho\in \pmb{T}^{*}$, as well as dedicated `list recursors' to handle finite sequences for any $\rho^{*}\in \pmb{T}^{*}$.        
A detailed definition of $\textsf{E-PA}^{\omega*}$ may be found in \cite{brie}*{\S2.1}.  
We now introduce some notations specific to $\textsf{E-PA}^{\omega*}$, as also used in \cite{brie}.
%Thirdly, we introduce the usual notations related to sequences from \cite{brie}*{\S2.1}.   
\begin{nota}[Finite sequences]\label{skim}\rm
The system $\textsf{E-PA}^{\omega*}$ has a dedicated type for `finite sequences of objects of type $\rho$', namely $\rho^{*}$.  Since the usual coding of pairs of numbers goes through in $\textsf{E-PA}^{\omega*}$, we shall not always distinguish between $0$ and $0^{*}$;  See e.g.\ the definition $(\GH)$ of the Gandy-Hyland functional in Section \ref{intro}.  
Similarly, we do not always distinguish between `$s^{\rho}$' and `$\langle s^{\rho}\rangle$', where the former is `the object $s$ of type $\rho$', and the latter is `the sequence of type $\rho^{*}$ with only element $s^{\rho}$'.  The empty sequence for the type $\rho^{*}$ is denoted by `$\langle \rangle_{\rho}$', usually with the typing omitted.  Furthermore, we denote by `$|s|=n$' the length of the finite sequence $s^{\rho^{*}}=\langle s_{0}^{\rho},s_{1}^{\rho},\dots,s_{n-1}^{\rho}\rangle$, where $|\langle\rangle|=0$, i.e.\ the empty sequence has length zero.
For sequences $s^{\rho^{*}}, t^{\rho^{*}}$, we denote by `$s*t$' the concatenation of $s$ and $t$, i.e.\ $(s*t)(i)=s(i)$ for $i<|s|$ and $(s*t)(j)=t(|s|-j)$ for $|s|\leq j< |s|+|t|$. For a sequence $s^{\rho^{*}}$, we define $\overline{s}N:=\langle s(0), s(1), \dots,  s(N)\rangle $ for $N^{0}<|s|$.  
For a sequence $\alpha^{0\di \rho}$, we also write $\overline{\alpha}N=\langle \alpha(0), \alpha(1),\dots, \alpha(N)\rangle$ for \emph{any} $N^{0}$.  By way of shorthand, $q^{\rho}\in Q^{\rho^{*}}$ abbreviates $(\exists i<|Q|)(Q(i)=_{\rho}q)$.  Finally, we shall use $\underline{x}, \underline{y},\underline{t}, \dots$ as short for tuples $x_{0}^{\sigma_{0}}, \dots x_{k}^{\sigma_{k}}$ of possibly different type $\sigma_{i}$.          
\end{nota}
Finally, we discuss an alternative way of formulating G\"odel's system $\T$. 
\begin{rem}[Alternatives to primitive recursion]\rm
 In \cite{escaleert}*{Cor.\ 20}, it is shown that $\textsf{T}$ can be \emph{equivalently} defined using \emph{finite product of selection functions} operators $\pmb{P}^{\rho}$, rather than the usual recursor constants $\pmb{R}^{\rho}$ as in \eqref{PR}.  The definition of $\pmb{P}^{\rho}$ is rather complicated, and therefore omitted, but we do point out that 
%Now, $G$ is clearly a finite product of selection functions in light of \cite{escaleert}*{Def.\ 10}, where the latter functionals are defined. 
in the proof of \cite{escaleert}*{Theorem 18}, the operator $\pmb{P}^{\rho}$ is defined \emph{explicitly} in terms of the recursor constants $\pmb{R}^{\rho}$.  
In other words, $\pmb{P}^{\rho}$ is primitive recursive in the sense of G\"odel's $\T$.   

\medskip
    
Intuitively speaking, the operator $\pmb{P}^{\rho}$ is such that $\pmb{P}^{\rho}(i,m, \dots)$ is defined in terms of $\pmb{P}^{\rho}(i+1,m, \dots)$ for $i\leq m$, and the constant-zero-functional if $i>m$.  In other words, $\pmb{P}^{\rho}$ can call itself, but only $m$ many times before defaulting to a fixed output, namely the constant-zero-functional.  
In light of the definition of $\pmb{P}^{\rho}$ in \cite{escaleert}*{Def.\ 10}, it is immediate that $G$ from \eqref{small} can be expressed in terms of the operator $\pmb{P}^{\rho}$, and $G$ is therefore primitive recursive (in the sense of system $\T$).  Indeed, $G(Y, s, M)$ can only call itself at most $M$ times before defaulting to the first case of \eqref{small}:  Since $G(Y, s, M)$ is defined in terms of $G(Y, s*(n_{1}+1), M)$, which is defined in terms of $G(Y, s*(n_{1}+1)*(n_{2}+1), M)$, and so on, we have that: $G(Y, s*(n_{1}+1)*(n_{2}+1)*\dots* (n_{M}+1), M)=Y(s*(n_{1}+1)*(n_{2}+1)*\dots *(n_{M}+1)*00\dots)$,  which is the first case of \eqref{small}, i.e. $G(Y, s, M)$ defaults to the first case in \eqref{small} after $M$ applications of its definition.  
\end{rem}

\subsection{The classical system $\P$}\label{PIPI}
In this section, we introduce the system $\P$.  We first discuss some of the external axioms studied in \cite{brie}.  

\medskip

Firstly, as in \cite{brie}*{Def.\ 6.1}, we have the following definition of $ \textsf{E-PA}^{\omega*}_{\st} $.  
The language of the latter is the language of $\textsf{E-PA}^{\omega*}$ extended with a new predicate `$\st_{\sigma}$' for every finite type $\sigma \in \pmb{T}^{*}$.  Just as in \cite{brie}, the typing is omitted.
\begin{definition}[The system  $ \textsf{E-PA}^{\omega*}_{\st} $]\label{debs}
The set $\mathcal{T}^{*}$ is defined as the collection of all the constants in the language of $\textsf{E-PA}^{\omega*}$.  
The system $ \textsf{E-PA}^{\omega*}_{\st} $ is defined as $ \textsf{E-PA}^{\omega{*}} + \mathcal{T}^{*}_{\st} + \textsf{IA}^{\st}$, where $\mathcal{T}^{*}_{\st}$
consists of the following axiom schemas:
\begin{enumerate}
\item The schema
 $\st(x)\wedge x=y\di\st(y)$,
\item The schema providing for each closed term $t\in \mathcal{T}^{*}$ the axiom $\st(t)$.
\item The schema $\st(f)\wedge \st(x)\di \st(f(x))$.
\end{enumerate}
The external induction axiom \textsf{IA}$^{\st}$ is as follows, for any formula $\Phi$:  
\be\tag{\textsf{IA}$^{\st}$}
\big(\Phi(0)\wedge(\forall^{\st}n^{0})(\Phi(n) \di\Phi(n+1))\big)\di(\forall^{\st}m^{0})\Phi(m).     
\ee
\end{definition}
Secondly, to guarantee that $\P$ be a conservative extension of Peano arithmetic, 
Nelson's axiom \emph{Standard part} needs to be weakened to $\HAC_{\INT}$ as follows, for any internal formula $\varphi$:  % (Recall Notation \ref{skim}):
\be\tag{$\HAC_{\INT}$}
(\forall^{\st}x^{\rho})(\exists^{\st}y^{\tau})\varphi(x, y)\di (\exists^{\st}F^{\rho\di \tau^{*}})(\forall^{\st}x^{\rho})(\exists y^{\tau}\in F(x))\varphi(x,y),
\ee
Note that $F(x)$ provides a \emph{finite sequence} of witnesses to $(\exists^{\st}y)$, explaining its name \emph{Herbrandized Axiom of Choice}.      

\medskip

Thirdly,  Nelson's axiom \emph{Idealisation} requires no weakening and appears in \cite{brie} as follows:  
\be\tag{\textsf{I}}
(\forall^{\st} x^{\sigma^{*}})(\exists y^{\tau} )(\forall z^{\sigma}\in x)\varphi(z,y)\di (\exists y^{\tau})(\forall^{\st} x^{\sigma})\varphi(x,y), 
\ee
where $\varphi$ is again an internal formula.  Intuitively speaking, (the contraposition of) idealisation $\textsf{I}$ allows us to `push all standard quantifiers to the front'.  

\medskip

Fourth, we introduce the system $\P$, for which we need a fragment of the axiom of choice.
Note that $\P$ does not include any fragment of \emph{Transfer}, as the latter translates to non-constructive axioms by Section \ref{seqtor}.
\begin{definition}[The classical system $\P$]~
\begin{enumerate}
\item For internal and quantifier-free $\varphi_{0}$, we have
\be\tag{$\QFAC^{\sigma, \tau}$}
(\forall x^{\sigma})(\exists y^{\tau})\varphi_{0}(x, y)\di (\exists F^{\sigma\di \tau})(\forall x^{\sigma})\varphi_{0}(x, F(x)).
\ee
\item The system $\P$ is defined as $ \textsf{E-PA}^{\omega*}_{\st} +\QFAC^{1,0}+\HAC_{\INT} +\textsf{I}+\textsf{IA}^{\st}$.  
\end{enumerate}
\end{definition}
The system $\P$ is a conservative extension of Peano arithmetic; in particular $\P$ is connected to $\textsf{E-PA}^{\omega*}$ by Theorem~\ref{consresult}.  
The superscript `$S_{\st}$' in the latter is the syntactic translation from \cite{brie}*{Def.\ 7.1} and is defined as:
\begin{definition}\label{corck}
Assume $\Phi(\underline{a})$ and $\Psi(\underline{b})$ in the language of $\P$ have interpretations
\be\label{dombu}
\Phi(\underline{a})^{S_{\st}}\equiv (\forall^{\st}\underline{x})(\exists^{\st}\underline{y})\varphi(\underline{x},\underline{y},\underline{a}) \textup{ and } \Psi(\underline{b})^{S_{\st}}\equiv (\forall^{\st}\underline{u})(\exists^{\st}\underline{v})\psi(\underline{u},\underline{v},\underline{b}),
\ee
where $\psi, \varphi$ are internal.  Then we have the folllowing:  
\begin{enumerate}[(i)]
\item $\psi_{0}^{S_{\st}}:=\psi_{0}$ for atomic internal $\psi_{0}$.  
\item$ \big(\st(z)\big)^{S_{\st}}:=(\exists^{\st}x)(z=x)$.
\item $(\neg \Phi)^{S_{\st}}:=(\forall^{\st} \underline{Y})(\exists^{\st}\underline{x})(\forall \underline{y}\in \underline{Y}[\underline{x}])\neg\varphi(\underline{x},\underline{y},\underline{a})$.  
\item$(\Phi\vee \Psi)^{S_{\st}}:=(\forall^{\st}\underline{x},\underline{u})(\exists^{\st}\underline{y}, \underline{v})[\varphi(\underline{x},\underline{y},\underline{a})\vee \psi(\underline{u},\underline{v},\underline{b})]$.
\item $\big( (\forall z)\Phi \big)^{S_{\st}}:=(\forall^{\st}\underline{x})(\exists^{\st}\underline{y})(\forall z)(\exists \underline{y}'\in \underline{y})\varphi(\underline{x},\underline{y}',z)$.
\end{enumerate}
\end{definition}
\begin{theorem}\label{consresult}
Let $\Phi(\tup a)$ be a formula in the language of \textup{\textsf{E-PA}}$^{\omega*}_{\st}$ and suppose $\Phi(\tup a)^\Sh\equiv\forallst \tup x \, \existsst \tup y \, \varphi(\tup x, \tup y, \tup a)$. If $\Delta_{\intern}$ is a collection of internal formulas and
\be\label{antecedn}
\P + \Delta_{\intern} \vdash \Phi(\tup a), 
\ee
then one can extract from the proof a sequence of closed terms $t$ in $\mathcal{T}^{*}$ such that
\be\label{consequalty}
\textup{\textsf{E-PA}}^{\omega*} +\QFAC^{1,0}+ \Delta_{\intern} \vdash\  \forall \tup x \, \exists \tup y\in \tup t(\tup x)\ \varphi(\tup x,\tup y, \tup a).
\ee
\end{theorem}
\begin{proof}
Immediate by \cite{brie}*{Theorem 7.7}.  
\end{proof}
The proofs of the soundness theorems in \cite{brie}*{\S5-7} actually provide an \emph{algorithm} to obtain the term $t$ from the theorem.  
In other words, one can just `read off' the term $t$ from the proof mentioned in \eqref{antecedn}.  

\medskip

The following corollary is only mentioned in \cite{brie} for Heyting arithmetic, but it also turns out to be valid for Peano arithmetic.  
The proof of the corollary takes place in the same meta-theory as Theorem \ref{consresult}.    
\begin{cor}[Term extraction]\label{consresultcor}
For internal $\psi$ and $\Phi(\underline{a})\equiv(\forall^{\st}\underline{x})(\exists^{\st}\underline{y})\psi(\underline{x},\underline{y}, \underline{a})$, we have $[\Phi(\underline{a})]^{S_{\st}}\equiv \Phi(\underline{a})$.  Hence, if $\Delta_{\intern}$ is a collection of internal formulas and
\[
\P + \Delta_{\intern} \vdash (\forall^{\st}\underline{x})(\exists^{\st}\underline{y})\psi(\underline{x},\underline{y}, \underline{a}), 
\]
then one can extract from the proof a sequence of closed terms $t$ in $\mathcal{T}^{*}$ such that
\[
\textup{\textsf{E-PA}}^{\omega*} +\QFAC^{1,0}+ \Delta_{\intern} \vdash (\forall \underline{x})(\exists \underline{y}\in t(\underline{x}))\psi(\underline{x},\underline{y},\underline{a}).
\]  
\end{cor}
\begin{proof}  
A tedious but straightforward verification using (i)-(v) from Definition \ref{corck} establishes that $[\Phi(\underline{a})]^{S_{\st}}\equiv \Phi(\underline{a})$ for $\Phi(\underline{a})\equiv(\forall^{\st}\underline{x})(\exists^{\st}\underline{y})\psi(\underline{x},\underline{y}, \underline{a})$ and internal $\psi$.  
This verification may also be found in \cite{samzoo}*{\S2} and \cite{sambon}*{\S2.1}.    
\end{proof}
With regard to notation, for the rest of this paper, a \emph{normal form} refers to a formula of the form $(\forall^{\st}x)(\exists^{\st}y)\varphi(x,y)$ for $\varphi$ internal.  Thus, one can say that normal forms are `invariant under $S_{\st}$' in the sense of the previous corollary.    

\medskip

Finally, the previous corollary is central to this paper.  Indeed, a large number of theorems in Nonstandard Analysis can be brought into a normal form (See also Remark \ref{firliborn}), and therefore fall within the scope of Corollary~\ref{consresultcor}. 

\subsection{Notations}\label{tempsde}
We finish this section with remarks on notation in $\P$.  First of all, we mostly follow Nelson, as sketched now.      
\begin{rem}[Nonstandard Analysis]\label{notawin}\rm
We write $(\forall^{\st}x^{\tau})\Phi(x^{\tau})$ and $(\exists^{\st}x^{\sigma})\Psi(x^{\sigma})$ for 
$(\forall x^{\tau})\big[\st(x^{\tau})\di \Phi(x^{\tau})\big]$ and $(\exists x^{\sigma})\big[\st(x^{\sigma})\wedge \Psi(x^{\sigma})\big]$.     
We write $(\forall x\in \Omega)\Phi(x^{0})$ and $(\exists x\in \Omega)\Psi(x^{0})$ as \emph{symbolic}\footnote{As suggested by its name, Nelson's \emph{internal} set theory deals with \emph{internal sets}, i.e.\ sets can only be formed in $\IST$ from \emph{internal} formulas.  In particular, \emph{external} formulas cannot be used to define sets in $\IST$, and a violation of this rule is called \emph{illegal set formation} by Nelson (See \cite{wownelly}).  Thus, our use of `$x\in \Omega$' is purely symbolic, as there is no set of all nonstandard numbers in $\IST$.}  abbreviations for 
$(\forall x^{0})\big[\neg\st(x^{0})\di \Phi(x^{0})\big]$ and $(\exists x^{0})\big[\neg\st(x^{0})\wedge \Psi(x^{0})\big]$.  Furthermore, we write `$x^{0}\in \Omega$' for $\neg\st(x)$.  %In keeping with the internal 
A formula $A$ is `internal' if it does not involve `$\st$'; $A^{\st}$ is defined from $A$ by appending `st' to all quantifiers (except bounded number quantifiers).    
\end{rem}
Secondly, we introduce an `approximate' notion of equality.  
\begin{rem}[Approximate quality]\label{equ}\rm
We define `approximate equality $\approx_{\tau}$' as:
\be\label{aparth2}
[x\approx_{\tau}y] \equiv (\forall^{\st} z_{1}^{\tau_{1}}\dots z_{k}^{\tau_{k}})[xz_{1}\dots z_{k}=_{0}yz_{1}\dots z_{k}]
\ee
if the type $\tau$ is composed as $\tau\equiv(\tau_{1}\di \dots\di \tau_{k}\di 0)$.  
%As noted in \cite{brie}*{p.\ 1973}, weak nonstandard systems can include the axiom of extensionality for $=_{\tau}$, but not for $\approx_{\tau}$.
Now, the system $\P$ includes the \emph{axiom of extensionality} \eqref{EXT}, but \emph{not} the following version: 
\be\label{EXTST}  
(\forall^{\st}  x^{\rho},y^{\rho},\varphi^{\rho\di \tau}) \big[x\approx_{\rho} y \di \varphi(x)\approx_{\tau}\varphi(y)   \big].
\ee
which is just \eqref{EXT}$^{\st}$, and we shall refer to \eqref{EXTST} as the \emph{axiom of standard extensionality}.
As noted in \cite{brie}*{p.\ 1973}, \eqref{EXT}$^{\st}$ is problematic and cannot be included in $\P$.  
Finally, we need an explicit version of the axiom of extensionality:
\be\tag{$\EXT(\Xi, Y)$}
(\forall f^{1}, g^{1})(\overline{f}\Xi(f,g)=_{0}\overline{g}\Xi(f,g)\di Y(f)=_{0}Y(g)).
\ee
We say that $\Xi^{2}$ is an \emph{extensionality functional} for the functional $Y^{2}$.   
\end{rem}

Finally, we introduce the following (strictly speaking `abuse of') notation.
\begin{rem}[Set-theoretic notation]\label{slettheory}\rm
As in \cite{brie}, we sometimes use intuitive set-theoretic notation, although $\P$ strictly speaking only involves functionals.  First of all, we assume that `sets of numbers $X^{1}$' are given by their characteristic functions $f^{1}_{X}$, i.e.\ $(\forall x^{0})[x\in X\asa f_{X}(x)=1]$.  

\medskip

Secondly, the notation `$Y^{2}\in C$' means that $Y^{2}$ is continuous on Baire space `as usual' given by \eqref{CB2}.  A formula $(\forall^{\st} Y^{2}\in C)(\dots)$ is thus shorthand for $(\forall Y^{2})\big([\st(Y)\wedge Y\in C]\di \dots\big)$; Note in particular that no mention \emph{whatsoever} of \eqref{CB2}$^{\st}$ is made, or will be made in the rest of this paper.  

\medskip

Thirdly, we sometimes block quantifiers together to save space;  In this way, the formula $\big(\forall (Y^{2}\in C, Z^{2})\in \Psi\big)(\dots)$ for some functional $\Psi^{2^{*}}$, is an abbreviation for 
\[
(\forall Z^{2})(\forall Y^{2})\big( \big[Y\in C \wedge (\exists j<|\Psi|)(\Psi(j)=_{2}Y)\wedge (\exists i<|\Psi|)(\Psi(i)=_{2}Z )\big]\di \dots\big),           
\]
which saves considerable space, as will become clear below.  
\end{rem}

\subsection{Applications of Nonstandard Analysis in Computability}\label{XYZ}
We discuss known applications of Nonstandard Analysis in Computability, in particular \cite{charke, dagsam,norhyp}.  

\medskip

First of all, as suggested by its title, the main goal of \cite{charke} is characterising the continuous functionals in highly elementary \emph{nonstandard} terms in Robinson's semantic approach to Nonstandard Analysis.  
In particular, Normann defines a class $F_{k}$ of so-called finitary operators mapping $F_{k-1}$ into $\N$.  In the nonstandard model, $^{*}F_{k}$ is the corresponding nonstandard extension, consisting of the so-called hyper-finitary functionals, and Normann proves that $\textsf{Ct}(k)$ is isomorphic to the \emph{standard part} of $^{*}F_{k}$.  

\medskip

While Normann's approach is similar in spirit to ours (representing complicated objects via elementary nonstandard ones), his nonstandard proofs do not obviously carry computational content.  In particular, the use of \emph{Standard Part} is problematic, as discussed in the final paragraph of this section.        

\medskip

Secondly, as again suggested by its title, the main goal of \cite{norhyp} is also the characterisation of a certain type structure in terms of elementary nonstandard objects.  
The authors state the following:  
\begin{quote}
The novelty [compared to \cite{charke}] here is that we use a constructive version of hyperfinite functionals and also generalise the method to transfinite types. Many of the results of this paper are constructive, though not the characterisation theorems themselves.  (See \cite{norhyp}*{p.\ 1216})
\end{quote}
Hence, the approach from \cite{norhyp} is again similar in spirit to ours (representing complicated objects via elementary nonstandard ones), but the nonstandard proofs again do not obviously carry computational content.  In particular, the use of \emph{standard extensionality} is problematic, as explicitly mentioned in \cite{norhyp}*{p.\ 1218}.        

\medskip

Thirdly, the author and Dag Normann explore the connection between higher-order computability theory and Nonstandard Analysis in \cite{dagsam}.  
The \emph{special fan functional} $\Theta$ from Section \ref{NWKL} (and related functionals based on $\textsf{WWKL}_{0}$ from \cite{simpson2}*{X}) is shown to have quite `non-standard' computational properties:  
On one hand, no type two functional (including $(\exists^{2})$ and $(S^{2})$) can compute the special fan functional, but both $(\exists^{3})$ and $\MUC$ can.  

\medskip

Thus, the special fan functional exhibits extreme computational hardness compared to its first-order strength, but applying the so-called \textsf{ECF}-translation (See \cite{troelstra1}*{\S2.6}) converts the existence of the former into $\WKL_{0}$.  Higher-type generalisations of $\STP$ (and hence of the special fan functional) give rise to even more extreme computational hardness.  
Furthermore, the combination $\paai+\STP$ allows one to derive $\ATR_{0}$ relative to `st', i.e.\ the proofs in \cite{charke} (and hence \cite{norhyp}) seem to take place in highly non-constructive systems.  In particular, it seems difficult to (directly) recover computational content from the proofs in \cite{charke, norhyp}, in contrast to the proofs in this paper.

\section{Preliminaries}\label{prim}
In this section, we prove some preliminary results needed below.  In particular, we study useful fragments of \emph{Standard Part} and \emph{Transfer} from $\IST$ in Section~\ref{monga}.  % essential to our results.
Furthermore, in Section \ref{exbisalal}, we derive a version of so-called bar induction for \emph{external} formulas from these fragments of $\IST$.  

\medskip

The theme of this paper is the extraction of relative computability results from theorems of Nonstandard Analysis.  
To further understanding, we will treat in Corollary~\ref{scruf} a very simple example of this theme.  
We also formulate a template for later term extraction results based on this corollary.  
\subsection{Fragments of Standard Part and Transfer}\label{monga}
In this section, we discuss several useful fragments of \emph{Standard Part} and \emph{Transfer} from $\IST$.  
To this end, we first introduce \emph{underspill} and \emph{overspill}, which will be used a lot below.  Intuitively speaking, overspill and underspill express that no internal formula can capture the `st' predicate exactly.  
  
\begin{theorem}\label{spilling}
The system $\P$ proves \emph{overspill} and \emph{underspill}, i.e.\
\[%\be\tag{\textsf{OS}}
(\forall^{\st}x^{\rho})\varphi(x)\di (\exists y^{\rho})\big[\neg\st(y)\wedge \varphi(y)  \big] \textup{ and }(\forall x^{\rho})\big[\neg\st(x)\di \varphi(x)]\di (\exists^{\st} y^{\rho})\varphi(y),
\]
for any internal formula $\varphi$.
\end{theorem}
\begin{proof}
Immediate by \cite{brie}*{Prop.\ 3.3 and \S5}.  
\end{proof}
We apply underspill most frequently as follows:  From $(\forall M\in \Omega)\psi(M)$ for internal $\psi$, we conclude $(\forall K^{0})\big[\neg\st(K)\di (\forall M\geq K)\psi(M) \big]$.  Applying underspill for $\varphi(K)\equiv (\forall M\geq K)\psi(M)$, we obtain $(\exists^{\st} K^{0}) (\forall M\geq K)\psi(M)$.   

\subsubsection{The nonstandard counterpart of weak K\"onig's lemma}\label{NWKL}
In this section, we study the following fragment of the \emph{Standard part} principle of $\IST$:
% by \cite{keisler1} and Theorem \ref{agda} below.  
\be\label{STP}\tag{\textup{\textsf{STP}}}
(\forall f^{1}\leq_{1}1)(\exists^{\st} g^{1}\leq_{1}1)(f\approx_{1}g).
\ee   
The function $g^{1}$ from \ref{STP} is called a \emph{standard part} of $f^{1}$.  
By the following theorem, $\STP$ is a nonstandard version of \emph{weak K\"onig's lemma} ($\WKL$).  The latter is the statement that a infinite binary tree has a path (See e.g.\ \cite{simpson2}*{IV}).  
%We have the following theorem.
\begin{theorem}\label{agda}
The system $\P$ proves that \ref{STP} is equivalent to 
\begin{align}\label{fanns}
(\forall T^{1}\leq_{1}1)\big[(\forall^{\st}n)(\exists \beta^{0})(|\beta|=n \wedge \beta\in T ) 
\di (\exists^{\st}\alpha^{1}\leq_{1}1)(\forall^{\st}n^{0})(\overline{\alpha}n\in T)   \big].
\end{align}
where ` $T\leq_{1}1$' means that $T$ is a binary tree.  Over $\P$, \ref{STP} is also equivalent to    
\be\label{krog}
(\forall f^{1})(\exists^{\st} g^{1})\big( (\forall^{\st}n^{0})(\exists^{\st}m^{0})(f(n)=m)\di   f\approx_{1}g\big).
\ee
\end{theorem}
\begin{proof}    
Assume \ref{STP} and apply overspill to $(\forall^{\st}n)(\exists \beta^{0})(|\beta|=n \wedge \beta\in T )$ to obtain $\beta_{0}^{0}\in T$ with nonstandard length $|\beta_{0}|$.  
Now apply \ref{STP} to $\beta^{1}:=\beta_{0}*00\dots$ to obtain a \emph{standard} $\alpha^{1}\leq_{1}1$ such that $\alpha\approx_{1}\beta$ and hence $(\forall^{\st}n)(\overline{\alpha}n\in T)$.  
For the reverse direction, let $f^{1}$ be a binary sequence, and define a binary tree $T_{f}$ which contains all initial segments of $f$.  
Now apply \eqref{fanns} for $T=T_{f}$ to obtain \ref{STP}.  

\medskip

For the final equivalence, $\eqref{krog}\di \ref{STP}$ is trivial, and for the reverse implication, fix $f^{1}$ such that 
$(\forall^{\st}n)(\exists^{\st}m)f(n)=m$ and let $h^{1}$ be such that $(\forall n,m)(f(n)=m\asa h(n,m)=1)$.  
Applying $\HAC_{\INT}$ to the former, there is standard $\Phi^{0\di 0^{*}}$ such that $(\forall^{\st}n)(\exists m\in \Phi(n))f(n)=m$, and define 
$\Psi(n):=\max_{i<|\Phi(n)|}\Phi(n)(i)$.  Now define $\alpha_{0}\leq_{1}1$ as:  $\alpha_{0}(0):= h(0,0)$, $\alpha_{0}(1):=h(0, 1)$, \dots, $\alpha_{0}(\Psi(0)):=h(0,\Psi(0))$, $\alpha_{0}(\Psi(0)+1):= h(1, 0)$, $\alpha_{0}(\Psi(0)+2):= h(1, 1)$, \dots, $\alpha_{0}(\Psi(0)+\Psi(1)):= h(1, \Psi(1))$, et cetera.  
Now let $\beta_{0}^{1}\leq_{1}1$ be the standard part of $\alpha_{0}$ provided by $\ref{STP}$ and define $g(n):=(\mu m\leq \Psi(n))\big[\beta_{0}(\sum_{i=0}^{n-1}\Psi(i)+m)=1\big]$.  By definition, $g^{1}$ is standard and $f\approx_{1} g$.     
\end{proof}    
The function $g^{1}$ from \eqref{krog} is also called a \emph{standard part} of $f^{1}$.  
We now show that \ref{STP} follows from the nonstandard uniform continuity of all type two functionals on Cantor space.  
Note that the principle \ref{kunt} in the theorem contradicts classical mathematics, as the latter involves \emph{discontinuous} functionals.    
\begin{theorem}\label{foor}
The axiom \ref{STP} can be proved in $\P$ plus the axiom
\be\label{kunt}\tag{\textsf{\textup{NUC}}}
(\forall^{\st}Y^{2})(\forall f^{1}, g^{1}\leq_{1}1)(f\approx_{1} g\di Y(f)=_{0}Y(g)).  
\ee
\end{theorem}
\begin{proof}
First of all, note that \ref{kunt} implies by Remark \ref{equ} that
\be\label{kunt2}
(\forall^{\st}Y^{2})(\forall f^{1}, g^{1}\leq_{1}1)(\exists^{\st}N^{0})(\overline{f}N=_{0} \overline{g}N\di Y(f)=_{0}Y(g)).  
\ee
Applying idealisation \textsf{I} to \eqref{kunt2}, we obtain that 
\be\label{kunt32}
(\forall^{\st}Y^{2})(\exists^{\st}x^{0^{*}})(\forall f^{1}, g^{1}\leq_{1}1)(\exists N^{0}\in x)(\overline{f}N=_{0} \overline{g}N\di Y(f)=_{0}Y(g)).  
\ee
which immediately yields that
\be\label{kunt3}
(\forall^{\st}Y^{2})(\exists^{\st}N_{0}^{0})(\forall f^{1}, g^{1}\leq_{1}1)(\overline{f}N_{0}=_{0} \overline{g}N_{0}\di Y(f)=_{0}Y(g)),   
\ee
by taking $N_{0}$ in \eqref{kunt3} to be $\max_{i<|x|}x(i)$ for $x$ as in \eqref{kunt32}.  
%Thus, every standard $Y^{2}$ is bounded on Cantor space; In particular, 
However, this implies that for $Y$ and $N_{0}$ as in \eqref{kunt3}, we have
\be\textstyle\label{fruck}
 (\forall f^{1}\leq_{1}1)(Y(f)\leq \max_{\sigma\leq_{0^{*}}1 \wedge |\sigma|=N_{0}})Y(\sigma*00\dots), 
\ee
i.e.\ $Y$ attains a standard maximum on Cantor space.  
In this light, consider the contraposition of \eqref{fanns} for some fixed $T\leq_{1}1$, and assume $(\forall^{\st}\alpha^{1}\leq_{1}1)(\exists^{\st}n^{0})(\overline{\alpha}n\not\in T)$.  %Similar to the second part of the previous proof, 
Applying $\HAC_{\INT}$ yields a standard functional $Y_{0}^{1\di 0^{*}}$ such that $(\forall^{\st}\alpha^{1}\leq_{1}1)(\exists i\in Y_{0}(\alpha))(\overline{\alpha}i\not\in T)$.  Now define $Y_{1}^{2}$ by $Y_{1}(\alpha):=\max_{i<|Y_{0}(\alpha)|}Y_{0}(\alpha)(i)$ and note that $(\forall^{\st}\alpha^{1}\leq_{1}1)(\exists n^{0}\leq Y_{1}(\alpha))(\overline{\alpha}n\not\in T)$ by definition.  
By the previous, $Y_{1}$ has a standard upper bound on Cantor space as in \eqref{fruck}, yielding $(\exists^{\st}k^{0})(\forall \beta^{1}\leq_{1}1)(\exists i\leq k)(\overline{\beta}i\not\in T)$, and \ref{STP} follows from Theorem~\ref{agda}.        
\end{proof}
The following is a more direct proof of Theorem \ref{foor}, not requiring Theorem \ref{agda}.
\begin{proof}
Working in $\P+\NUC$, suppose there is some $g_{0}\leq_{1}1$ such that $(\forall^{\st}f^{1}\leq_{1}1)(f\not\approx_{1}g_{0})$.  
The latter implies $(\forall^{\st}f^{1}\leq_{1}1)(\exists^{\st}n^{0})(\overline{f}n\ne\overline{g_{0}}n)$, 
and applying $\HAC_{\INT}$ yields \emph{standard} $Y_{0}^{1\di 0^{*}}$ such that $(\forall^{\st}f^{1}\leq_{1}1)(\exists n^{0}\in Y_{0}(f))(\overline{f}n\ne\overline{g_{0}}n)$.  
Define standard $Y_{1}^{2}$ by $Y_{1}(f):=\max_{i<|Y_{0}(f)|}Y_{0}(f)(i)$ and note that $(\forall^{\st}f^{1}\leq_{1}1)(\overline{f}Y_{1}(f)\ne\overline{g_{0}}Y_{1}(f))$.  
By the above, $\NUC\di \eqref{kunt3}$, which yields $(\forall^{\st}Y^{2})(\forall g^{1}\leq_{1}1)(\exists^{\st}K^{0})(Y(g)\leq K)$ by taking $K=Y(\overline{g}N_{0}*00\dots)$ for $N_{0}$ as in \eqref{kunt3}.  Note that  $\overline{g}N_{0}$ is standard by \cite{brie}*{Cor.\ 2.19}.
Applying \emph{idealisation} \textsf{I}, we obtain the formula $(\forall^{\st}Y^{2})(\exists^{\st} k^{0^{*}})(\forall g^{1}\leq_{1}1)(\exists K^{0}\in k)(Y(g)\leq K)$, and the maximum of $k$ yields the existence of a standard upper bound for any standard $Y^{2}$ on Cantor space.  In particular, $Y_{1}$ has a standard upper bound on Cantor space, say $m_{1}$.  
Then define the sequence $g^{1}_{1}$ as $\overline{g_{0}}m_{1}*00\dots$, 
and note that $\overline{g_{0}}m_{1}$ is standard by \cite{brie}*{Cor.\ 2.19}, and hence $g_{1}$ is also standard.  By the definition of $Y_{1}$, we have $\overline{g_{1}}Y_{1}(g_{1})\ne \overline{g_{0}}Y_{1}(g_{1})$, which yields a contradiction.     
\end{proof}
As explained in the introduction, the theme of this paper is the extraction of relative computability results from theorems of Nonstandard Analysis.  
We now provide the first example of this theme in Corollary \ref{scruf}, based on the proof of $\NUC\di \STP$ in the previous theorem.  
The following definitions are relevant.  
\be
(\forall Y^{2}) (\forall f^{1}, g^{1}\leq_{1}1)(\overline{f}\Omega(Y)=\overline{g}\Omega(Y)\notag \di Y(f)=Y(g)). \label{lukl3}\tag{$\textsf{\textup{MUC}}(\Omega)$}
\ee
\begin{align}\label{fanns333}\tag{$\SCF(\Theta)$}
(\forall g^{2}, T^{1}\leq_{1}1)\big[ (\forall  \alpha^{1}\in \Theta(g)(2))(\alpha\leq_{1}1\di \overline{\alpha}g(\alpha)\not\in T)\di(\forall \beta\leq_{1}1)(\exists i\leq_{0}\Theta(g)(1))(\overline{\beta}i\not\in T) \big].
\end{align}
The functional $\Omega^{3}$ as in $\MUC(\Omega)$ is the (intuitionistic) \emph{fan functional} and yields a conservative extension of $\WKL$ for the second-order language (See \cite{kohlenbach2}*{Prop.\ 3.15}).  By Corollary \ref{scruf}, $\Theta$ as in $\SCF(\Theta)$ is a `special case' of $\Omega$ and we refer to $\Theta$ as the \emph{special} fan functional (although $\Theta$ is strictly speaking not unique).  

\medskip

The computational properties of $\Theta$ have been studied in \cite{dagsam} and are briefly sketched in Section \ref{XYZ}.  From a computability theoretic perspective, the main property of $\Theta$ is the selection of $\Theta(g)(2)$ as a finite sequence of binary sequences $\langle f_0 , \dots, f_n\rangle $ such that the neighbourhoods defined from $\overline{f_i}g(f_i)$ for $i\leq n$ form a cover of Cantor space;  almost as a by-product, $\Theta(g)(1)$ can then be chosen to be the maximal value of $g(f_i) + 1$ for $i\leq n$. We stress that $g^{2}$ in $\SCF(\Theta)$ may be \emph{discontinuous} and that Kohlenbach has argued for the study of discontinuous functionals in higher-order RM (See \cite{kohlenbach2}*{\S1}). 
In the absence of discontinuous functionals, $\Theta$ behaves as follows.  
\begin{cor}\label{scruf}
From the proof in $\P$ that $\NUC\di \STP$, a term $t^{3\di3}$ can be extracted such that 
$\textup{\textsf{E-PA}}^{\omega*}+\QFAC^{1,0}$ proves $(\forall \Omega^{3})\big[\MUC(\Omega)\di \SCF(t(\Omega))]$.  
\end{cor}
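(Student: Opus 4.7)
My plan is to apply the term extraction machinery from Theorem \ref{consresult} (and more specifically Corollary \ref{consresultcor}) directly to the proof of Theorem \ref{foor} that $\NUC \to \STP$. The first step is to recognise that $\NUC$, viewed internally, is essentially asserting the \emph{standardness} of a fan functional $\Omega$ satisfying the internal predicate $\MUC(\Omega)$. Thus the implication $\NUC \to \STP$ proved in Theorem \ref{foor} should be re-read, in the style of the remark at the end of Section \ref{PIPI}, as a proof of $(\forall^{\st}\Omega)[\MUC(\Omega) \to \STP]$, where $\MUC(\Omega)$ is an internal side-condition on a standard parameter $\Omega$.

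Second, I would bring the conclusion into a normal form amenable to term extraction. Using the equivalent formulation \eqref{fanns} of $\STP$ from Theorem \ref{agda}, and negating the existential conclusion in \eqref{fanns}, the argument following \eqref{kunt3} actually proves the contrapositive version: \emph{given} a standard functional $g^{2}$ playing the role of the Skolemised functional $Y_{0}$ from the proof of Theorem \ref{foor} and \emph{given} a binary tree $T$ with $(\forall \alpha\leq_{1}1)(\overline{\alpha}g(\alpha)\notin T)$, one exhibits a standard uniform bound $k$ with $(\forall\beta\leq_{1}1)(\exists i\leq k)(\overline{\beta}i\notin T)$. Packaging this as a normal form gives a statement of the shape $(\forall^{\st}\Omega, g)(\exists^{\st} k, \Psi)\,\varphi_{\textup{int}}(\Omega,g,k,\Psi)$, where $\varphi_{\textup{int}}$ is essentially the body of $\SCF$ (with $\Theta(g)(1) = k$ and $\Theta(g)(2) = \Psi$) together with the internal side condition $\MUC(\Omega)$.

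Third, Corollary \ref{consresultcor} produces a closed term $t\in\T^{*}$ witnessing $k$ and $\Psi$ from $\Omega$ and $g$, and setting $\Theta := t(\Omega)$ then yields the internal implication $\MUC(\Omega) \to \SCF(t(\Omega))$ provable in $\textup{\textsf{E-PA}}^{\omega*}$. By inspection of the proof of Theorem \ref{foor}, the term $t$ essentially sets $t(\Omega)(g)(1)$ to be $\Omega$ applied to a suitable majorant of $g$ (extracted from the bounding step right after \eqref{kunt3}), and $t(\Omega)(g)(2)$ to be the canonical enumeration of all binary sequences of that length padded with zeros. The main obstacle is the careful bookkeeping required to match up the clauses (i)--(v) of the $S_{\st}$-translation in Definition \ref{corck} with the steps of the informal proof (particularly the use of $\HAC_{\INT}$ to produce $Y_{0}$, and the use of idealisation to pass from \eqref{kunt2} to \eqref{kunt3}), so as to verify that the algorithm underlying Theorem \ref{consresult} indeed outputs a term of the advertised simple shape rather than some opaque composite.
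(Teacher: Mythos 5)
Your proposal is correct and follows essentially the same route as the paper: normalize $\NUC$ (via idealisation) and $\STP$ (via the contrapositive of \eqref{fanns}), replace the antecedent's $\exists^{\st}$-quantifier by a standard Skolem functional $\Omega$ with the internal side-condition $\MUC(\Omega)$ (the paper phrases this as ``strengthening the antecedent''), bring all standard quantifiers outside, and apply Corollary~\ref{consresultcor}. The only small inaccuracy is in your description of the extracted term: $t(\Omega)(g)(1)$ is not ``$\Omega$ applied to a majorant of $g$'' but rather the maximum of $g$ over all length-$\Omega(g)$ binary prefixes padded with zeros (as in Corollary~\ref{foorkes}), though this is cosmetic and does not affect the correctness of the argument.
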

\begin{proof}
By the proof of the theorem, $\NUC$ is equivalent to the normal form \eqref{kunt3}, which we abbreviate as $(\forall^{\st}Y^{2})(\exists^{\st}N^{0})A(Y,N)$.  
The contraposition of \eqref{fanns} is 
\begin{align}\label{fannsXXY}
(\forall T^{1}\leq_{1}1)\big[ (\forall^{\st}\alpha\leq_{1}1)(\exists^{\st}n^{0})(\overline{\alpha}n\not\in T)\di
 (\exists^{\st}k^{0})(\forall \beta\leq_{1}1)(\exists i\leq k)(\overline{\beta}i\not\in T) \big].
\end{align}
Since standard functionals have standard output for standard input, \eqref{fannsXXY} implies:
\begin{align}\label{fannsXXX}
(\forall T^{1}\leq_{1}1)(\forall^{\st}g^{2})\big[ (\forall^{\st}\alpha\leq_{1}1)(\overline{\alpha}g(\alpha)\not\in T)\di
 (\exists^{\st}k^{0})(\forall \beta\leq_{1}1)(\exists i\leq k)(\overline{\beta}i\not\in T) \big].
\end{align}
Pushing all standard quantifiers outside, we obtain that
\[
(\forall^{\st}g^{2})(\forall T^{1}\leq_{1}1)(\exists^{\st}k^{0}, \alpha^{1}\leq_{1}1)\big[(\overline{\alpha}g(\alpha)\not\in T)\di(\forall \beta\leq_{1}1)(\exists i\leq k)(\overline{\beta}i\not\in T) \big].
\]
Applying idealisation $\textsf{I}$, we pull the standard quantifiers to the front as follows:
\begin{align}\label{teringzooi}
(\forall^{\st}g^{2})(\exists^{\st}w^{1^{*}})(\forall T^{1}\leq_{1}1)(\exists ( \alpha^{1}\leq_{1}1,  k^{0}) \in w)\big[(\overline{\alpha}g(\alpha)\not\in T)
\di(\forall \beta\leq_{1}1)(\exists i\leq k)(\overline{\beta}i\not\in T) \big], 
\end{align}
which we abbreviate as $(\forall^{\st}g^{2})(\exists^{\st}w^{1^{*}})B(g, w)$.  Hence, the proof of $\ref{kunt}\di \STP$ yields a proof of 
$(\forall^{\st}Y^{2})(\exists^{\st}N^{0})A(Y,N)\di (\forall^{\st}g^{2})(\exists^{\st}w^{1^{*}})B(g, w)$, which yields
\be\label{hoer}
\big[(\exists^{\st}\Omega^{3})(\forall Y^{2})A(Y,\Omega(Y))\di (\forall^{\st}g^{2})(\exists^{\st}w^{1^{*}})B(g, w)\big], 
\ee
by strengthening the antecedent.  Bringing all standard quantifiers up front:
\be\label{calvarie}
(\forall^{\st}\Omega^{3}, g^{2})(\exists^{\st}w^{1^{*}})\big[(\forall Y^{2})A(Y,\Omega(Y))\di B(g, w)\big];
\ee
Applying Corollary \ref{consresultcor} to `$\P\vdash \eqref{calvarie}$', we obtain a term $t^{3\di3}$ such that 
\be\label{calvarie2}
(\forall \Omega^{3}, g^{2})(\exists w\in t(\Omega, g))\big[(\forall Y^{2})A(Y,\Omega(Y))\di B(g, w)\big] 
\ee
is provable in $\textsf{E-PA}^{\omega*}+\QFAC^{1,0}$.  
Bringing all quantifiers inside again, \eqref{calvarie2} yields
\be\label{calvarie3}
(\forall \Omega^{3})\big[(\forall Y^{2})A(Y,\Omega(Y))\di  (\forall g^{2})(\exists w\in t(\Omega, g))B(g, w)\big], 
\ee
Clearly, the antecedent of \eqref{calvarie3} expresses that $\Omega^{3}$ is the fan functional.  To define a functional $\Theta$ as in $\SCF(\Theta)$ from $t(\Omega, g)$, note that the latter is a finite sequence of numbers and binary sequences by \eqref{teringzooi};  Using basic sequence coding, we may assume that $t(\Omega, g)=t_{0}(\Omega, g)*t_{1}(\Omega, g)$, where the first (resp.\ second) part contains the binary sequences (resp.\ numbers).  Now define $\Theta( g)(1)$ as $\max_{i<|t_{1}(\Omega, g)|}t_{1}(\Omega, g)(i)$ and $\Theta( g)(2):=t_{0}(\Omega, g)$, and note that $\Theta$ indeed satisfies $\SCF(\Theta)$.   
%Define $u$ $t(\Omega, g)(1)$ cap    $u(\Omega, g)(2)$ as the . XXXX 
\end{proof}
In the following remark, we discuss how the proof of Corollary \ref{scruf} provides a template for the rest of the term extraction results in this paper.  
We will apply this template to another basic example in Section \ref{seqtor}.%Theorem \ref{loeder} and Corollary \ref{seconde}.    
\begin{rem}[Template for term extraction]\label{firliborn}\rm
~% y  To apply Corollary \ref{consresultcor} to `$\P\vdash\NUC\di \STP$', proceed as follows:
\begin{enumerate}[(i)]
\item Bring antecedent and consequent in normal form (See \eqref{kunt3} and \eqref{teringzooi}).
\item Introduce a \emph{standard} witnessing functional in the antecedent, and drop the remaining `st' (See \eqref{hoer}). \label{ted}
\item Bring all standard quantifiers up front to obtain a normal form (See \eqref{calvarie}).  
Use idealisation \textsf{I} when encountering `$(\forall x)(\exists^{\st}y)$' (Irrelevant for Corollary \ref{scruf}).\label{cruz}
\item Apply Corollary \ref{consresultcor} to the proof in $\P$ of the normal form to obtain a term $t$ and a proof in \textsf{E-PA}$^{\omega*}$ (See \eqref{calvarie2}).  
\item Bring all quantifiers inside again to obtain the sought-after relative computability result (See \eqref{calvarie3}).
\end{enumerate}
All further term extraction results follow this template, but often in less detail.  It is interesting to note that the \emph{nonstandard} definitions of e.g.\ continuity, Riemann integration, compactness, differentiability, etc, have normal forms in (fragments of) $\P$;  Furthermore, normal forms are `closed under modus ponens' in the sense that an implication between two normal forms can be brought into a normal form too (As is done in items \eqref{ted} and \eqref{cruz} above).  Thus, the template seems to apply to any theorem which only involves nonstandard definitions.  This is explored in the context of Reverse Mathematics and its `zoo' in \cite{samzoo,sambon}.  
\end{rem}
\subsubsection{Nonstandard arithmetical comprehension}\label{seqtor}
In this section, we apply the template from Remark \ref{firliborn} to another instructive example, involving the following fragment of Nelson's axiom \emph{Transfer}:  
\be\tag{\textup{$\sigtoe$}}\label{predruk}
(\forall^{\st}f^{1})\big[(\exists m^{0})(\forall n^{0})f(m,n)=0\di (\exists^{\st} k^{0})(\forall l^{0})f(k,l)=0\big].
\ee
and the following \emph{nonstandard continuity principle}:
\be\label{druk22}\tag{\textup{\textsf{NPC}}}
(\forall^{\st}Y^{2}\in C, f^{1})(\forall g^{1})(f\approx_{1}g\di Y(f)=_{0}Y(g)).
\ee
By Corollary \ref{seconde}, $\sigtoe$ is a nonstandard `precursor' to arithmetical comprehension.  We have the following theorem.    
%We do need the following `non-nonstandard principles'.
\begin{theorem}\label{loeder}
The system $\P$ proves that $\sigtoe\di \NPC$.  % we can extract 
\end{theorem}
\begin{proof}
We make essential use of the proof of \cite{kohlenbach4}*{Prop.\ 4.7}.  
In the latter, it is shown that a continuous functional on Baire space has a modulus of continuity, assuming arithmetical comprehension.  
With our notations and for $Y^{2}\in C$, the aforementioned proof amounts to weakening \eqref{CB2} to:  
\be\label{karaffe}
(\forall f^{1})(\exists N^{0})\big[(\forall \tau^{0^{*}}, \sigma^{0^{*}})( Y(\overline{f}N*\sigma*00\dots)=_{0}Y(\overline{f}N*\tau*00\dots))\big],   
\ee
and using arithmetical comprehension to obtain the characteristic function $\chi^{2}$ of the formula in square brackets in \eqref{karaffe}.    
The latter then yields $(\forall f^{1})(\exists N^{0})(\chi(f,N)=1)$, and $\QFAC^{1,0}$ and quantifier-free induction are applied to the former to obtain $H^{2}$
such that $H(f)$ is the least such $N$.  The functional $H^{2}$ is then shown to be the modulus of continuity for $Y^{2}\in C$.  

\medskip

Applying $\sigtoe$ to \eqref{karaffe} for standard $f^{1}$ and standard $Y^{2}\in C$, yields:
\[
(\forall^{\st} f^{1})(\exists^{\st} N^{0})\big[(\forall \tau^{0^{*}}, \sigma^{0^{*}})( Y(\overline{f}N*\sigma*00\dots)=_{0}Y(\overline{f}N*\tau*00\dots))\big].   
\]
Hence, by the leastness of $H(f)$ from the previous paragraph, the latter is standard for standard $f^{1}$.  Since $H$ is a modulus of pointwise continuity, we thus obtain 
\[
(\forall^{\st} f^{1})(\exists^{\st} N^{0})(\forall g^{1})(\overline{f}N=_{0}\overline{g}N\di Y(f)=_{0}Y(g)), 
\]
which immediately implies that $Y^{2}$ is nonstandard continuous, and $\NPC$ follows.
\end{proof}
To apply term extraction to Theorem \ref{loeder}, the following principles are needed.   
\be
(\forall Y^{2}\in C, f^{1}, g^{1})(\overline{f}\Psi(Y, f)=\overline{g}\Psi(Y, f)\notag \di Y(f)=Y(g)). \label{lukl2}\tag{$\textsf{\textup{MPC}}(\Psi)$}
\ee
\be\tag{$\MU(\mu)$}
(\forall f^{1})\big[(\exists n^{0})f(n)=0\di f(\mu(f))=0\big].
\ee
Note that $\MPC(\Psi)$ states that $\Psi^{3}$ is a modulus-of-continuity functional, while $\MU(\mu)$ states that $\mu^{2}$ is Feferman's search operator (See e.g.\ \cite{kohlenbach2} for the latter).  
One usually abbreviates `$(\exists{\mu^{2}})\MU(\mu)$' by $(\mu^{2})$, and the latter provides arithmetical comprehension in the sense of \eqref{arco} below.  
\begin{cor}\label{seconde}
From the proof in $\P$ that $\sigtoe\di \NPC$, a term $t$ can be extracted such that 
$\textup{\textsf{E-PA}}^{\omega*}+\QFAC^{1,0}$ proves $(\forall \mu^{2})\big[\MU(\mu)\di \MPC(t(\mu))]$.  
\end{cor}
\begin{proof}
First of all, a normal form for $\sigtoe$ is:
\be\label{normaltrank}
(\forall^{\st}f^{1}) (\exists^{\st} k^{0})\big[(\exists m^{0})(\forall n^{0})f(m,n)=0\di(\exists i\leq k)(\forall l)f(i,l)=0\big],
\ee
where $A(f, k)$ is the internal formula in square brackets.  A normal form for $\NPC$ is
\be\label{kuntnor}
(\forall^{\st}Y^{2}\in C, f^{1})(\exists^{\st}N_{0}^{0})\big[(\forall g^{1})(\overline{f}N_{0}=_{0} \overline{g}N_{0}\di Y(f)=_{0}Y(g))\big],   
\ee
which can be obtained in exactly the same way that \eqref{kunt3} is derived from \ref{kunt}.  Let $B(Y, f, N_{0})$ be the formula in square brackets in \eqref{kuntnor}.  The implication 
$\sigtoe\di \NPC$ thus implies 
\be\label{eqref}
(\forall^{\st}f^{1}) (\exists^{\st} k^{0})A(f, n)\di (\forall^{\st}Y^{2}\in C, g^{1})(\exists^{\st}N_{0}^{0})B(Y, g, N_{0}).  
\ee      
As standard functionals yield standard outputs for standard inputs, we may strengthen the antecedent of \eqref{eqref} as follows:
\be\label{eqref2}
(\forall^{\st}\nu^{2})\big[(\forall^{\st}f^{1}) A(f, \nu(f))\di (\forall^{\st}Y^{2}\in C, g^{1})(\exists^{\st}N_{0}^{0})B(Y, g, N_{0})\big].  
\ee
We may also strengthen the antecedent by dropping the `st' to obtain
\be\label{eqref246}
(\forall^{\st}\nu^{2})\big[(\forall f^{1}) A(f, \nu(f))\di (\forall^{\st} Y^{2}\in C, g^{1})(\exists^{\st}N_{0}^{0})B(Y, g, N_{0})\big].  
\ee
Brining all standard quantifiers to the front, we obtain the normal form
\be\label{eqref3}
(\forall^{\st}\nu^{2}, Y^{2}\in C, g^{1})(\exists^{\st}N_{0}^{0})\big[(\forall f^{1}) A(f, \nu(f))\di B(Y, g, N_{0})\big].  
\ee
Apply Corollary \ref{consresultcor} to `$\P\vdash \eqref{eqref3}$' to obtain a term $t$ s.t.\ $\textsf{E-PA}^{\omega*}+\QFAC^{1,0}$ proves
\be\label{eqref4}
(\forall \nu^{2}, Y^{2}\in C, g^{1})(\exists N_{0}^{0}\in t(\nu, Y, g))\big[(\forall f^{1}) A(f, \nu(f))\di B(Y, g, N_{0})\big].  
\ee
Now define the term $s$ by $s(\nu, Y, g ):=\max_{i<|t(\nu, Y, g)|}t(\nu, Y, g)(i)$ and note that 
\be\label{eqref5}
(\forall \nu^{2}, Y^{2}\in C, g^{1})\big[(\forall f^{1}) A(f, \nu(f))\di B(Y, g, s(\nu, Y, g))\big].  
\ee
Bringing all quantifiers inside again, we obtain that 
\be\label{eqref6}
(\forall \nu^{2})\big[(\forall f^{1}) A(f, \nu(f))\di (\forall Y^{2}\in C, g^{1})B(Y, g, s(\nu, Y, g))\big], 
\ee
where the consequent is clearly $\MPC(u(\nu))$ for $u(\nu)(Y, g):=s(\nu, Y,g )$.  

\medskip

Finally, it is easy to define $\nu$ as in $(\forall f^{1})A(f, \nu(f))$ from \eqref{eqref6} explicitly in terms of $\mu^{2}$ as in $\MU(\mu)$;  Indeed, for such $\mu^{2}$ we have: 
\be\label{arco}
(\forall h^{1})\big[ (\forall k^{0})(h(k)\ne0) \asa h(\mu(h))\ne0   \big], 
\ee
i.e.\ $\mu^{2}$ as in $\MU(\mu)$ allows us to decide universal formulas.  
Thus, $(\exists m^{0})(\forall n^{0})f(m,n)=0$ is equivalent to $(\exists m^{0})f\big(m,\mu((\lambda n)\tilde{f}(m,n))\big)=0$, where for $\tilde{k}$ is defined as 
\[
\tilde{k}(n):=
\begin{cases}
1&   k(n)=0 \\
0 & \textup{otherwise}
\end{cases}
\]
Applying the definition of $\mu^{2}$ to $(\exists m^{0})f\big(m,\mu((\lambda n)\tilde{f}(m,n))\big)=0$, we observe that $(\exists m^{0})(\forall n^{0})f(m,n)=0$ yields $(\forall n^{0})f(\mu((\lambda m)\mu((\lambda n)\tilde{f}(m,n))),n)=0$, and $\nu(f):=(\lambda f)\mu((\lambda m)\mu((\lambda n)\tilde{f}(m,n)))$ is as required for $(\forall f^{1})A(f, \nu(f))$, and we are done.  
%In particular, $\nu$ is obtained by two applications of $\mu$.        
%$A(f, k)$ is 
%\be\label{genne}
%(\exists m^{0})(\forall n^{0})f(m,n)=0\di(\exists i\leq k)(\forall l)f(i,l)=0
%\ee
%and one can use $\mu^{2}$ as in $\MU(\mu)$ to remove the universal quantifier in the consequent of \eqref{genne}.  
\end{proof}
As is clear from the last part of the proof, $\sigtoe$ is actually the nonstandard precursor of the search functional:
\be\tag{$\MU_{2}(\nu)$}
(\forall f^{1})\big[(\exists m^{0})(\forall n^{0})f(m,n)=0\di (\forall l^{0})f(\nu(f),l)=0\big],
\ee
but by the previous proof, two applications of Feferman's search operator $\mu$ yields $\nu$ as in $\MU_{2}(\nu)$.  We refer to the latter as `Feferman's \emph{second} search functional', which will be needed in Section \ref{relletje}.

\subsubsection{A `computable' fragment of the Standard Part principle}
In this section, we discuss the \emph{Standard Part} principle $\Omega$\textsf{-CA}, a very practical consequence of \HAC$_{\textup{\INT}}$.  
%which allows us 
%to construct standard objects `from above'.  
Intuitively speaking, $\OCA$ expresses that we can obtain the standard part (in casu $G$) of \emph{$\Omega$-invariant} nonstandard objects (in casu $F(\cdot,M)$), defined as follows.     
%Recall that we write `$M\in \Omega$' as short for $\neg\st(M^{0})$.
\begin{definition}[$\Omega$-invariance]\label{homega} Let $F^{(\sigma\times  0)\di 0}$ be standard and fix $M^{0}\in \Omega$.  
Then $F(\cdot,M)$ is {\bf $\Omega$-invariant} if   
\be\label{homegainv}
(\forall^{\st} x^{\sigma})(\forall N^{0}\in \Omega)\big[F(x ,M)=_{0}F(x,N) \big].  
\ee
\end{definition}
\begin{princ}[$\Omega$\textsf{-CA}]\rm Let $F^{(\sigma\times 0)\di 0}$ be standard and fix $M\in \Omega$.
For $\Omega$-invariant $F(\cdot,M)$, there is standard $G^{\sigma\di 0}$ such that
\be\label{homegaca}
(\forall^{\st} x^{\sigma})(\forall N^{0}\in \Omega)\big[G(x)=_{0}F(x,N) \big].  
\ee
\end{princ}
In line with $\STP$, we also refer to $G(\cdot)$ as `a standard part of $F(\cdot, N)$'.  
Intuitively speaking, $\Omega$\textsf{-CA} provides a standard part for a nonstandard object, if the latter is \emph{independent of the choice of nonstandard number} used in its definition.
%$\OCA$ implies the existence of the computable functions using Post's Theorem relative to the standard world.     
\begin{theorem}\label{drifh}
The system $\P$ proves $\Omega\textup{-\textsf{CA}}$.  
\end{theorem}
\begin{proof}
Assume $F(\cdot,M^{0})$ is $\Omega$-invariant, i.e.\ we have 
\be\label{korfs}
(\forall^{\st} x^{\sigma})(\forall N^{0},M^{0}\in \Omega)\big[F(x ,M)=_{0}F(x,N) \big],  
\ee
and underspill (See Theorem \ref{spilling}) implies that
\be\label{horf}
(\forall^{\st} x^{\sigma})(\exists^{\st}k^{0})(\forall N^{0},M^{0}\geq k)\big[F(x ,M)=_{0}F(x,N) \big].
\ee
Now apply \HAC$_{\textup{\INT}}$ to \eqref{horf} to obtain standard $\Phi^{\sigma\di 0^{*}}$ such that
\[
(\forall^{\st} x^{\sigma})(\exists k^{0}\in \Phi(x))(\forall N^{0},M^{0}\geq k)\big[F(x ,M)=_{0}F(x,N) \big].
\]
Define standard $\Psi(x):= \max_{i<|\Phi(x)|}\Phi(x)(i)$ and note that 
\be\label{kiru}
(\forall^{\st} x^{\sigma})(\forall N^{0},M^{0}\geq \Psi(x))\big[F(x ,M)=_{0}F(x,N) \big].
\ee
Finally, define $G(x):=F(x,\Psi(x))$ and note that the latter is as in $\OCA$.
\end{proof}

We finish this section with two remarks on the above results.  
\begin{rem}[Extensions of $\OCA$]\label{lop}\rm
It is straightforward to verify that Theorem~\ref{drifh} also holds if the quantifier `$(\forall^{\st} x^{\sigma})$' in \eqref{homegainv} and \eqref{homegaca} is restricted as in `$(\forall^{\st} x^{\sigma})(C(x)\di \dots)$', where $C$ is any \emph{internal} formula.  We shall also refer to this slight extension as $\OCA$.    
The axiom $\OCA$ can also be generalised to $F^{(\sigma\times 0)\di \tau}$ using the approximate equality `$\approx_{\tau}$' defined in Remark \ref{equ}.  
However, the above version suffices for our purposes.  
\end{rem}
\begin{rem}[Using $\HAC_{\INT}$ and $\textsf{I}$]\label{simply}\rm
The axiom $\HAC_{\INT}$ produces a functional of type $\sigma\di \tau^{*}$ which outputs a \emph{finite sequence} of witnesses.  
Now, in the proof of Theorem~\ref{drifh}, $\HAC_{\INT}$ is applied to \eqref{horf} to obtain $\Phi^{\sigma\di 0^{*}}$, and from the latter, the functional $\Psi^{\sigma\di 0}$ is defined as follows: $\Psi(x):= \max_{i<|\Phi(x)|}\Phi(x)(i)$.  In particular, $\Psi$ satisfies \eqref{kiru}, and provides a \emph{witnessing functional}, due to the `monotone' nature of the internal formula in \eqref{horf}.  In general, $\HAC_{\INT}$ provides a \emph{witnessing functional} assuming (i) $\tau=0$ in $\HAC_{\INT}$ and (ii) the formula $\varphi$ from $\HAC_{\INT}$ is `sufficiently monotone' as in: 
$(\forall x^{\sigma},n^{0},m^{0})\big([n\leq_{0}m \wedge\varphi(n,x)] \di \varphi(m,x)\big)$.    

\medskip

A similar observation applies to idealisation \textsf{I};  Indeed, consider \eqref{kunt2} and note that the internal formula in the latter is monotone as above.  Taking the maximum of $x$ from \eqref{kunt32} as $N_{0}:=\max_{i<|x|}x(i)$, one can drop the quantifier $(\exists N\in x)$ in \eqref{kunt32} to obtain \eqref{kunt3}.  
To save space in proofs, we will sometimes skip the (obvious) step involving the maximum of the finite sequences, when applying $\HAC_{\INT}$ and $\textsf{I}$. 
\end{rem}
\subsection{External bar induction}\label{exbisalal}
In this section, we derive various versions of bar induction inside extensions of $\P$ studied in the previous section.
%The former is a generalisation of the well-known principle of (mathematical) induction of arithmetic.  
Now, bar induction can be viewed as `induction down a tree', and we consider the following example. 
\begin{princ}[\textsf{BI}$_{0}$]\label{BI0}
For internal quantifier-free $Q(x^{0})$, if
\be
(\forall \alpha^{1} )(\exists  n^{0})Q(\overline{\alpha}n) \wedge  (\forall t^{0})\big[(\forall x^{0})Q(t*\langle x\rangle)\di Q(t)\big]\label{cond2b}
\ee
then we have $Q(\langle\rangle)$.  % for any $N\in \Omega$.  
\end{princ}
Intuitively speaking, bar induction $\BI_{0}$ expresses that we may conclude $Q(x)$ for $x=\langle\rangle$ from the fact that $Q$ is implied `downwards' from child nodes to parent nodes (second conjunct of \eqref{cond2b}) and that $Q$ holds eventually along any path (first conjunct of \eqref{cond2b}).   
On a technical note, \BI$_{0}$ is essentially \BI$_{\textup{qf}}$ from \cite[p.\ 78]{troelstra1} for $P(n)\equiv Q(n)$ quantifier-free.  
We now prove $\BI_{0}^{\st}$ form \ref{STP}.  
\begin{theorem} \label{ebimaki2}
In $\P+\ref{STP}$, we have $\BI_{0}^{\st}$.   
\end{theorem}
\begin{proof}
Assume \eqref{cond2b}$^{\st}$ and suppose we have $\neg Q(\langle\rangle)$.  
Now define $F(x,M):=(\mu m\leq M)\neg Q(x*\langle m\rangle)$ and put $G(0):=F(\langle\rangle, M)$ and $G(n+1):=F(G(0)*\dots *G(n), M)$. 
By \eqref{cond2b}$^{\st}$ for $t=\langle\rangle$ and the assumption $\neg Q(\langle\rangle)$, $G(0)$ is standard.  
Furthermore, we also have that $G(n+1)$ is standard if $G(k)$ is standard, for standard $n$ and $k\leq n$, by \eqref{cond2b}$^{\st}$ for $t=G(0)*\dots*G(n)$. Hence, $G(n)$ is standard for all standard $n$ by external induction \textsf{IA}$^{\st}$.
This in turn implies that $\neg Q(G(0)*\dots*G(k))$ for standard $k$, by quantifier-free induction and \eqref{cond2b}$^{\st}$.
Now consider the sequence $\beta^{1}=G(0)*G(1)*G(2)*\dots$ and let $\gamma^{1}$ be its standard part via $\STP$.  Finally, apply \eqref{cond2b}$^{\st}$ for $\alpha=\gamma$ to obtain a contradiction.  
\end{proof}
The theorem is not that surprising: \ref{STP} is the nonstandard version of $\WKL$ by Theorem~\ref{agda}, the latter lemma is equivalent to a version of dependent choice (See \cite[VIII.2.5]{simpson2}), and bar induction is a version of the latter.  
% and a version of the latter is equivalent to

\medskip

Nonstandard Analysis also has a `distinct' kind of induction, called \emph{external induction}. as follows:
\begin{princ}[\textsf{ExInd}]
For standard $F^{(0\times0)\di 0}$ and $M\in \Omega$, if
\be\label{EI}
\st(F(0,M))\wedge (\forall^{\st}n)[\st(F(n,M))\di \st(F(n+1,M))],
\ee
then $(\forall^{\st}n)(\st(F(n,M))$.  
\end{princ}
Intuitively speaking, (\textsf{ExInd}) tells us that we may use induction on the new standardness predicate \emph{along the standard numbers} (and obviously not along all the numbers).      
Although seemingly more general than normal induction, we now derive \textsf{(ExInd)} from the standardness of the recursor constants in $\P$.  We consider this theorem as its proof is similar to the proof of Theorem \ref{ebimaki}.  
\begin{theorem}\label{reform} 
%In $\P+\ref{STP}$, we have \textup{(\textsf{ExInd})}.  
%CHANGE1
The system $\P\setminus\{ \textsf{\textup{IA}}^{\st}\}$ proves \textup{(\textsf{ExInd})}.  
\end{theorem}
\begin{proof}
Consider \eqref{EI} and replace `\st' as follows:
\[
 (\forall^{\st}n)[(\exists^{\st}k^{0})(F(n,M)\leq k)\di (\exists^{\st}l^{0})(F(n+1,M)\leq l)\big].   
\]
Now bring all standard quantifiers outside to obtain:
\be\label{handje}
 (\forall^{\st}n, k)(\exists^{\st}l)[F(n,M)\leq k\di F(n+1,M)\leq l].
\ee
Recall Remark \ref{simply} and apply $\HAC_{\INT}$ to \eqref{handje} to obtain standard $g^{1}$ such that %
\[
(\forall^{\st}n, k)[F(n,M)\leq k\di F(n+1,M)\leq g(n,k)].
\]
%$l= g(n,k)$ in the previous formula.  
Now use primitive recursion to define the \emph{standard} function $h^{1}$ such that $h(0):=F(0,M)$ and 
$h(k+1):=g(k, h(k))$.  By the definition of $h$, we have $F(n, M)\leq h(n)$ for standard $n$, proved by quantifier-free induction (of the non-external variety).  As $h(n)$ is standard for standard $n$, \eqref{EI} implies the consequent of \textsf{(ExInd)}.    
\end{proof}
Note that the same proof goes through for variations of \textup{(\textsf{ExInd})}, e.g.\ if the induction hypothesis involves $(\forall k\leq n)(\st(F(k,M)))$ instead of $\st(F(n,M))$.  %   in (\textsf{ExInd}).
Note that \textsf{(ExInd)} also follows directly from \textsf{IA}$^{\st}$, but the latter cannot be included in fragments of $\P$ based on \textsf{E-PRA}$^{\omega}$ (See \cite{kohlenbach2}*{\S2}).  
%\begin{cor}\label{reform3} 
%%In $\P+\ref{STP}$, we have \textup{(\textsf{ExInd})}.  
%%CHANGE1
%The system $\P\setminus\{ \textsf{\textup{IA}}^{\st}\}+\STP$ proves $\BI_{0}^{\st}$.  
%\end{cor}

\medskip

We now formulate \emph{external bar induction}, which is bar induction on the (external) standardness predicate.  
\begin{princ}[\textsf{EBI}]\label{EBI}
For standard $F^{(0\times 0)\di 0}$ and $M\in \Omega$, if
\begin{align}
&(\forall^{\st} \alpha^{1} )(\exists^{\st} n^{0})\big[\st(F(\overline{\alpha}n,M))  \big] \label{cond1} \\
\wedge & (\forall^{\st} t^{0})\big[(\forall^{\st} x^{0})(\st(F(t*\langle x\rangle ,M)))\di \st(F(t,M))\big] \label{cond2}
\end{align}
then $\st(F(\langle\rangle,M))$.  % for any $N\in \Omega$.  
\end{princ}
%As it happens, external bar induction $\EBI$ is not much stronger than normal bar induction, as we now prove $\EBI$ from \ref{STP}.  % where $\BI_{0}$ is the following special case of bar induction (in the original language without `st').  
Finally, we prove external bar induction from $\STP$ and the following fragment of Nelson's axiom \emph{Transfer}:  
\be\tag{$\paai$}
(\forall^{\st}f^{1})\big[(\forall^{\st}n^{0})f(n)\ne0\di (\forall m^{0})f(m)\ne0\big].
\ee
\begin{theorem} \label{ebimaki}
The system $\P+\ref{STP}+\paai$ proves \textup{\EBI}.   
\end{theorem}
\begin{proof}
First of all, we use $\paai$ to obtain \emph{standard} $\mu^{2}$ such that $[\MU(\mu)]^{\st}$.  To this end, define $\nu^{2}$ as follows:
\[
\nu(f, N):=
\begin{cases}
(\mu n\leq N)f(n)=0 & (\exists n\leq N)(f(n)=0) \\
0 & \textup{otherwise}
\end{cases}.
\]
Assuming $\paai$, we have $(\forall^{\st}f^{1})(\forall N,M\in \Omega)(\nu(f, M)=_{0}\nu(f, N))$, i.e.\ $\nu(\cdot, N)$ is $\Omega$-invariant.  
Now let $\mu^{2}$ be the standard part of $\nu(\cdot, N)$ provided by $\OCA$, and note that $[\MU(\mu)]^{\st}$, i.e.\ $\mu^{2}$ is Feferman's search operator relative to `st'.  

\medskip

Secondly, consider \eqref{cond2}, %and suppose $F(\langle\rangle,M)$ is nonstandard.    
and bring the latter in the form:
\be\label{kurgf}
(\forall^{\st} t^{0})(\exists^{\st}x^{0},m^{0} )(\forall^{\st}l^{0})\big[F(t*\langle x\rangle ,M)\leq l\di F(t,M)\leq m\big].  
\ee
Let $f^{1}\leq_{1}1$ be the characteristic function of the formula in square brackets in \eqref{kurgf}, and let \emph{standard} $h^{1}\leq_{1}1$ be the standard part of $f^{1}$ as provided by $\STP$.   
Hence, \eqref{kurgf} implies that
\be\label{fasjoe}
 (\forall^{\st} t^{0})(\exists^{\st}x^{0},m^{0} )(\forall^{\st}l^{0})(h(t, x, m,l)=1), 
\ee 
Similar to the proof of Corollary \ref{seconde}, $\mu^{2}$ from $[\MU(\mu)]^{\st}$ 
can be used to define a standard characteristic function $\chi^{1}$ for $(\forall^{\st}l^{0})(h(t, x, m,l)=1)$ from \eqref{fasjoe}, i.e.\ 
\[
(\forall^{\st}x, m, l)\big[(\forall^{\st}l^{0})(h(t, x, m,l)=1) \asa \chi(x, m, l)=1].  
\]
Applying $\mu^{2}$ as in $[\MU(\mu)]^{\st}$ to $ (\forall^{\st} t^{0})(\exists^{\st}x^{0},m^{0} )(\chi(x, m,l)=1)$ 
then yields standard $g^{1}$ such that $ (\forall^{\st} t^{0})(\forall^{\st}l^{0})(h(t, g(t)(1), g(t)(2),l)=1)$, which implies:
\be\label{eqrefke}
(\forall^{\st} t^{0},l^{0})\big[F(t*\langle g(t)(1)\rangle ,M)\leq l\di F(t,M)\leq g(t)(2)\big].    
\ee
Finally, we derive \textsf{EBI} using $g$ from \eqref{eqrefke}.  Thus, assume \eqref{cond1} and \eqref{cond2} and consider the standard $\alpha^{1}$ defined by $\alpha(0):=g(\langle\rangle)(1)$ and $\alpha(n+1):= g(\overline{\alpha}n)(1)$.  
By \eqref{cond1}, there is standard $n$ such that $F(\overline{\alpha}n, M)$ is standard.  However, then there is standard $l$ such that $F(\overline{\alpha}n, M)=F(\overline{\alpha}(n-1)*\langle g(\overline{\alpha}(n-1))(1)\rangle, M)$ satisfies the antecedent of \eqref{eqrefke} for $t=\overline{\alpha}(n-1)$.  
Hence, $F(\overline{\alpha}(n-1), M)$ is also standard by \eqref{eqrefke}, as the latter yields $F(\overline{\alpha}(n-1), M)\leq g(\overline{\alpha}(n-1))(2)$. 
Applying \eqref{eqrefke} for $t=\overline{\alpha}(n-2)$ and $l=g(\overline{\alpha}(n-1))(2)$, we obtain $F(\overline{\alpha}(n-2), M)\leq g(\overline{\alpha}(n-2))(2)$.  
Applying the same procedure at most $n$ times, we obtain $F(\langle\rangle, M)\leq g(\langle\rangle)(2)$, i.e.\ $F(\langle \rangle, M)$ is standard, and \textsf{EBI} follows.    
\end{proof}

\section{The Gandy-Hyland functional in Nonstandard Analysis}\label{drei}
\subsection{Introduction}
In this section, we prove our main results concerning the Gandy-Hyland functional $\Gamma$ from $\eqref{GH}$ and its so-called canonical approximation $G$, defined as follows:
%In Section \ref{deballen}, we show that inside Nelson's \emph{internal set theory}, introduced in Section~\ref{T}, the following \emph{primitive recursive}\footnote{The functional $G$ is primitive recursive \emph{in the sense of G\"odel's system} $\T$;  See Section \ref{TITI}.} functional 
\be\label{small233}
G(Y,s,M)=
\begin{cases}
Y(s*00\dots) & |s|\geq M \\
Y(s*0* (\lambda n)G(Y, s*(n+1),M)) & \textup{otherwise}   
\end{cases}.
\ee
As to its provenance, we recall that the $\Gamma$-functional was introduced in \cite{gandymahat} as an example of a functional not Kleene-S1-S9-computable over the total continuous functionals, even with the fan functional as an oracle (See \cite[\S4]{noortje} or \cite{longmann}*{\S8}). 
By contrast, $G$ is primitive recursive, as discussed in Section \ref{TITI}.  

\medskip

Using the results from the previous section, we prove in Section~\ref{deballen} that the Gandy-Hyland functional $\Gamma(\cdot)$ equals $G(\cdot, M)$ 
for all standard inputs and nonstandard $M$;  This proof takes place in an extension of the system $\P$ from Section \ref{PIPI}.   
%extended with \ref{STP} and a nonstandard continuity axiom.    
From this nonstandard proof, we extract a term from G\"odel's \textsf{T} expressing $\Gamma$ in terms of the special fan functional (See Corollary \ref{scruf}) and 
a modulus-of-continuity functional.   
This final result \emph{does not involve Nonstandard Analysis}.    

\medskip

In Section~\ref{laiba} to \ref{quatre}, we obtain similar nonstandard theorems, from which we extract the associated relative computability results.  
In particular, in Sections~\ref{laiba} and \ref{ballen3}, we prove `pointwise' versions of the above results, not involving a modulus-of-continuity functional.   
We introduce the well-known notion of \emph{associate} of a continuous functional in Section \ref{quatre}, and use it to obtain particularly elegant results.  In our opinion, the aforementioned variations of the main result establish the robustness of our approach.   

\medskip

Finally, we show in Section \ref{relletje} that one can \emph{re-obtain} the original nonstandard theorem 
(that the Gandy-Hyland functional $\Gamma(\cdot)$ equals $G(\cdot, M)$ for all standard inputs and nonstandard $M$) from the proof of a certain natural relative computability result, called the \emph{Herbrandisation} 
of the original nonstandard theorem.  In this way, the latter is seen to have the same computational content as its Herbrandisation.     
Based on the results in Section \ref{relletje}, one can easily obtain the Herbrandisation for \emph{any} nonstandard theorem in this paper.  The observed connection between a nonstandard theorem and its `highly constructive' Herbrandisation, provides us with a two-way street between the fields Nonstandard Analysis and Computability.    

\subsection{From Nonstandard Analysis to relative computability}\label{deballen}
In this section, we prove that the functionals $G(\cdot, M)$ and $\Gamma(\cdot)$ are equal for standard inputs and nonstandard $M^{0}$, inside an extension of $\P$.  
From this proof, we extract a term from G\"odel's system $\textsf{T}$ which computes $\Gamma$ in terms of a modulus-of-continuity functional and the special fan functional (See Corollary \ref{scruf}).   

\medskip

As noted in the first section, the $\Gamma$-functional corresponds to modified bar recursion of type 0 (See \cite[\S4]{bergolijf}).  
Since bar recursion holds in the model of all total continuous functionals (See \cite{ershov, bergolijf2}), the easiest way of obtaining $\Gamma$ from $G$ seems to be adding the 
continuity axiom $\NPC$ to $\P$, which was defined above as:
\be\label{druk2}\tag{\textup{\textsf{NPC}}}
(\forall^{\st}Y^{2}\in C, f^{1})(\forall g^{1})(f\approx_{1}g\di Y(f)=_{0}Y(g)),
\ee
where `$Y^{2}\in C$' is the (internal) definition of continuity as in \eqref{CB2}.  
As discussed in Remark \ref{rukker}, \ref{druk2} without the restriction `$Y^{2}\in C$' is inconsistent, while $\ref{druk2}$ easily\footnote{Fix a standard $Y^{2}$ and standard $f^{1}$ in \eqref{CB2}, and apply (the contraposition of) \emph{Transfer}.} follows from the $\IST$ axiom \emph{Transfer} (See Theorem~\ref{loeder}).       

\medskip

Furthermore, according to \cite[p.\ 167]{bergolijf}, the role of the continuity principle and bar induction in \cite[Theorem 2.5]{bergolijf} is \emph{to verify the correctness of the \emph{[}bar recursive\emph{]} witnessing functional}.  
As was proved in Section \ref{exbisalal}, the principles \ref{STP} and $\sigtoe$ from Section~\ref{monga} yield a version of bar induction and a nonstandard continuity principle $\NPC$.  Hence, we arrive at the following theorem.
\begin{theorem}\label{drag}
In $\P$\textup{ + \ref{predruk} + \ref{STP}}, we have
%\be\label{alltime}
%(\forall^{\st}Y^{2}\in C,s^{0})(\forall M,N\in \Omega)(H(Y,s,N)=_{0}H(Y,s,M)), 
%\ee
\be\label{alltime2}
(\forall^{\st}Y^{2}\in C,s^{0})(\forall M,N\in \Omega)(G(Y,s,N)=_{0}G(Y,s,M)), 
\ee
i.e.\ the canonical approximation of $\Gamma$ is $\Omega$-invariant.
\end{theorem}
\begin{proof}
We sketch the proof of the theorem and then provide a detailed version.

\medskip

First of all, \textsf{EBI} and nonstandard continuity $\NPC$ may be used in light of Theorem \ref{loeder} and Theorem \ref{ebimaki}.  
Secondly, one uses this bar induction to prove that $G(Y, s,M)$ is standard for standard $Y^{2}\in C, s^{0}$ and nonstandard $M^{0}$.  Thirdly, one applies bar induction again to prove that \eqref{alltime2} holds for fixed inputs.  In both cases, nonstandard continuity is used to establish \eqref{cond1} and \eqref{cond2} in external bar induction.   
We now provide a detailed proof. 

\medskip

We first prove that $G(\cdot,M)$ is standard for standard input and nonstandard $M$ using \textsf{EBI}.  
To this end, fix standard $Y^{2}\in C,s^{0}$ and $M\in \Omega$, and define $F(x^{0},M):=G(Y,s*x,M)$.  % where $G$ is as in \eqref{ngandy}.  
To prove \eqref{cond1}, fix standard $\gamma^{1}$ and $N\in \Omega$. We have 
\begin{align}
F(\overline{\gamma}N,M) &=G(Y,s*\overline{\gamma}N, M)\notag\\
&=Y(s*\overline{\gamma}N*0*(\lambda n)G(Y,s*\overline{\gamma}N*(n+1),M))\label{adoklol2}\\
&=Y({s*{\gamma}}),\label{adoklol}
\end{align}
where the final step follows by nonstandard continuity $\ref{druk2}$ as $s*\gamma \approx_{1} \zeta$, where the latter is the sequence in \eqref{adoklol2}.  
We have proved that $(\forall K\in \Omega)F(\overline{\gamma}K)=Y(s*\gamma)$ and 
underspill yields $(\exists^{\st}k^{0})(\forall K\geq k)F(\overline{\gamma}K)=Y(s*\gamma)$, from which it is immediate that $(\forall^{\st}\gamma^{1})(\exists^{\st} m)(\forall n\geq m)(\st(F(\overline{\gamma}n,M)))$, and hence \eqref{cond1}.  % follows.   

\medskip

To prove \eqref{cond2}, assume the antecedent of the latter for standard $t$, and consider
\begin{align}
F(t,M)&=G(Y,s*t, M)\notag\\
&=Y\big(s*t*0* (\lambda n)G(Y, s*t*(n+1),M )) \notag\\ %*G(Y, s*t*2,M )*G(Y, s*t*3,M )*\dots \big)\notag\\
&=Y\big(s*t*0* (\lambda n)F(t*(n+1),M ))\label{slock}  %*F(t*2,M )*F(t*3,M )*\dots \big), \label{slock}
\end{align}
which follows by the definitions of $F$ and $G$.  
However, the antecedent of \eqref{cond2} tells us that $F(t*\langle m\rangle,M)$ is standard for standard $m$.  
Hence, the sequence 
\be\label{fuzz}
s*t*0* F(t*1,M )*F(t*2,M )*F(t*3,M )*\dots
\ee
has a standard part by \ref{STP}, say $\gamma^{1}$, and \ref{druk2} yields $F(t,M)=Y(\gamma)$, which is standard.
Hence, we obtain \eqref{cond2}, and $F(\langle\rangle, M)=G(Y,s,M)$ is standard by $\textsf{EBI}$, for any standard $Y^{2}\in C$ and standard $s^{0}$.

\medskip

Secondly, we prove \eqref{alltime2} using the previous part of the proof and \textsf{EBI}. 
Thus, define the function $F(x,M)$ as:
\be\label{fucker}
F(x,M):=
\begin{cases}
0 & G(Y,s*x,M)=G(Y,s*x,M+1) \\
M & \textup{otherwise}
\end{cases},
\ee
where $Y^{2}\in C$ and $s^{0}$ are standard again.  
Repeating the steps from the previous paragraph of the proof, we note that $F(\cdot,M)$ satisfies \eqref{cond1} and \eqref{cond2} for any $M\in \Omega$.  
Hence, \textsf{EBI} yields that $F(\langle\rangle,M)$ is standard for any nonstandard $M$;  As a consequence, we have $G(Y,s,M)=G(Y,s,M+1)$ by definition, for any nonstandard $M$.  
Hence, \eqref{alltime2} is proved, and we are done.  
%  The proof of \eqref{alltime} is completely analogous, and we are done.      
\end{proof}
\begin{rem}[The essential use of $\textsf{EBI}$]\label{frakkklll}\rm
On a side-note, it seems that $\textsf{EBI}$ is essential for the first part of the above proof, but not for the second part: \eqref{alltime2} follows from applying $\BI_{0}^{\st}$ for $Q(x)\equiv [G(Y,s*x,M)=G(Y,s*x,M+1)]$, \emph{assuming that} 
$G(Y, s, M)$ is standard for standard $Y^{2}\in C, s^{0}$ and nonstandard $M$, as was proved in the first part of the above proof \emph{using} \textsf{EBI} \emph{however}.  
\end{rem}
The Gandy-Hyland is unique as noted in \cite{gandymahat}*{\S6} and \cite{longmann}*{\S8.3.3}.  We prove a similar result, for which we require:  % the following formula:
\be\tag{$\GH_{\st}(\Gamma)$}
(\forall^{\st} Y^{2}\in C, s^{0})\big[\Gamma(Y^{2},s^{0})= Y\big(s*0* (\lambda n^{0})\Gamma(Y, s*(n+1))\big)\big].
\ee
%\eqref{GH} with the leading existential quantifier omitted.  
The corollary expresses that the standard and unique Gandy-Hyland functional equals its canonical approximation.  
\begin{cor}\label{Cruxxxx}
In $\P+\ref{predruk}+\ref{STP}$, the Gandy-Hyland functional exists and equals its canonical approximation, i.e.\ there is standard $\Gamma^{3}$ such that $\GH_{\st}(\Gamma)$ and
\be\label{crubm}\tag{\textsf{\textup{CA}}$(\Gamma)$}
(\forall^{\st}Y^{2}\in C,s^{0})(\forall N\in \Omega)(G(Y,s,N)=\Gamma(Y,s)).
\ee
Furthermore, the Gandy-Hyland functional is unique, i.e.\ $(\forall \Gamma_{1}^{3})(\GH_{\st}(\Gamma_{1})\di \textsf{\textup{CA}}(\Gamma_{1}))$.
% in that every functional $\Gamma$ such that \textsf{\textup{GH}}$_{\st}(\Gamma)$ satisfies \ref{crubm}.  
\end{cor}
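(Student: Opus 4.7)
The plan is to extract $\Gamma$ from the $\Omega$-invariance of $G$ via $\OCA$, to verify $\GH_{\st}(\Gamma)$ using nonstandard continuity, to prove uniqueness via external bar induction, and finally to identify $\Gamma$ with the analogous standardisation of $H$.

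First, Theorem \ref{drag} states that $G(Y,s,\cdot)$ is $\Omega$-invariant for standard $Y^{2}\in C, s^{0}$.  Applying Theorem \ref{drifh}, in the slightly extended form of Remark \ref{lop} accommodating the side condition $Y\in C$, we obtain a standard $\Gamma^{3}$ with $\Gamma(Y,s)=G(Y,s,N)$ for every standard $Y^{2}\in C, s^{0}$ and every $N\in \Omega$; the same procedure applied to \eqref{alltime} yields a standard $\tilde\Gamma^{3}$ with $\tilde\Gamma(Y,s)=H(Y,s,N)$ under the same hypotheses.  To verify $\GH_{\st}(\Gamma)$, fix standard $Y^{2}\in C, s^{0}$ and any $M\in \Omega$; since $s$ is standard, $|s|$ is standard and hence $|s|<M$, so the recursive clause of $G$ gives
\[
\Gamma(Y,s) \;=\; G(Y,s,M) \;=\; Y\bigl(s*0*(\lambda n)\, G(Y, s*(n+1), M)\bigr).
\]
For each standard $n$, the sequence $s*(n+1)$ is standard, so $G(Y, s*(n+1), M)=\Gamma(Y,s*(n+1))$; hence $(\lambda n) G(Y, s*(n+1), M)\approx_{1}(\lambda n)\Gamma(Y, s*(n+1))$, and \ref{druk2} applied to the standard $Y$ yields $\Gamma(Y,s)=Y\bigl(s*0*(\lambda n)\Gamma(Y, s*(n+1))\bigr)$, i.e.\ $\GH_{\st}(\Gamma)$.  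An entirely parallel computation, noting that the trailing zeros in the $H$-sequence occupy only nonstandard indices and hence do not disturb $\approx_{1}$, yields $\GH_{\st}(\tilde\Gamma)$.

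For uniqueness, let $\Gamma'$ be any standard functional satisfying $\GH_{\st}(\Gamma')$; we prove $\Gamma(Y,s)=\Gamma'(Y,s)$ for standard $Y^{2}\in C, s^{0}$ by applying $\BI_{0}^{\st}$ (Theorem \ref{ebimaki2}) to the internal quantifier-free predicate $Q(t)\equiv\Gamma(Y,s*t)=\Gamma'(Y,s*t)$, with $Y, s$ fixed as standard parameters.  The downward step is immediate: if $Q(t*\langle x\rangle)$ holds for every standard $x$, then $(\lambda n)\Gamma(Y, s*t*(n+1))\approx_{1}(\lambda n)\Gamma'(Y, s*t*(n+1))$, so the right-hand sides of $\GH_{\st}(\Gamma)$ and $\GH_{\st}(\Gamma')$ at $s*t$ coincide by \ref{druk2}, giving $Q(t)$.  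For the bar, fix standard $\alpha^{1}$; nonstandard continuity \ref{druk2} implies that the internal predicate
\[
A(N)\;\equiv\;(\forall g^{1})\bigl(\overline{g}N=\overline{s*\alpha}N \;\to\; Y(g)=Y(s*\alpha)\bigr)
\]
holds at every $N\in \Omega$ (since $\overline{g}N=\overline{s*\alpha}N$ forces $g\approx_{1}s*\alpha$ when $N$ is infinite), so underspill produces a standard $N_{0}$ with $A(N_{0})$.  Unfolding $\GH_{\st}(\Gamma)$ at $s*\overline{\alpha}N_{0}$ writes $\Gamma(Y, s*\overline{\alpha}N_{0})$ as $Y$ applied to a sequence whose first $N_{0}$ coordinates agree with those of $s*\alpha$, so $A(N_{0})$ collapses it to $Y(s*\alpha)$; the same holds for $\Gamma'$, whence $Q(\overline{\alpha}N_{0})$.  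Thus $\BI_{0}^{\st}$ produces $Q(\langle\rangle)$; specialising to $\Gamma'=\tilde\Gamma$ gives $\tilde\Gamma=\Gamma$ on standard inputs, and combining this with the previous paragraph produces $H(Y,s,N)=\tilde\Gamma(Y,s)=\Gamma(Y,s)=G(Y,s,N)$, which is \ref{crubm}.

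The principal obstacle is the bar clause of the uniqueness argument: extracting a \emph{standard} modulus of continuity from $Y^{2}\in C$ and the standardness of $Y$ requires rephrasing \ref{druk2} as an internal $\Pi_{1}^{0}$-statement and applying underspill.  All remaining steps are routine unfoldings of the recursive definitions of $G$ and $H$ combined with the $\approx_{1}$-stability granted by \ref{druk2}.
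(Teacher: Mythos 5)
Your proof is essentially correct and follows the same overall structure as the paper's: obtain a standard $\Gamma$ from the $\Omega$-invariance \eqref{alltime2} via $\OCA$, verify $\GH_{\st}(\Gamma)$ by applying \ref{druk2} to the standard sequence $s*0*(\lambda n)\Gamma(Y,s*(n+1))$, and settle uniqueness (and the identification with $H$) by a bar-induction argument. The paper handles uniqueness by re-running the $\textsf{EBI}$ argument of Theorem~\ref{drag} with the function $F(x,M)$ from \eqref{fucker} modified to compare $G(Y,s*x,M)$ with $\Gamma_{1}(Y,s*x)$; you instead apply $\BI_{0}^{\st}$ directly to the quantifier-free predicate $Q(t)\equiv \Gamma(Y,s*t)=\Gamma'(Y,s*t)$. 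This is a slightly cleaner variant since $Q$ is already internal quantifier-free and no auxiliary $F$ with a case split on equality is needed; both routes rest on the same bar-induction principle derived from \ref{STP}.

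Two small points. First, you fix $\Gamma'$ to be \emph{standard}, but the statement asserts uniqueness for \emph{every} $\Gamma$ satisfying $\GH_{\st}(\Gamma)$ (this generality matters for the subsequent Corollary~\ref{TE}, whose hypothesis \eqref{fornikal} quantifies over all $\Gamma^{3}$). Fortunately this is easy to repair: your $\BI_{0}^{\st}$ argument never uses $\st(\Gamma')$. The downward clause only uses \ref{druk2} applied to the \emph{standard} sequence built from $\Gamma$, and $\GH_{\st}(\Gamma')$ is a statement about $\Gamma'$'s values at standard inputs whether or not $\Gamma'$ is standard; the bar clause likewise works verbatim. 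In the proof of Theorem~\ref{ebimaki2}, what must be standard is the bar variable $t$ and the $\mu$-search witnesses, not the parameters of $Q$. So simply delete the word ``standard'' in front of $\Gamma'$. Second, your predicate $A(N)$ in the bar clause should be indexed by $|s|+N$ (or equivalently compared against $\overline{s*\alpha}(|s|+N)$) so that the first initial segment of $Y$'s argument after unfolding $\GH_{\st}(\Gamma)$ at $s*\overline{\alpha}N_{0}$ lies within the region where $A(N_{0})$ applies; this is pure bookkeeping.
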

\begin{proof}
By \eqref{alltime2}, $G(Y,s,M)$ is $\Omega$-invariant, and $\OCA$ and Remark \ref{lop} yield the standard part of $G(Y,s,M)$, say $\Gamma_{0}(Y,s)$.  
For standard $Y^{2}\in C,s^{0}$ and $M\in \Omega$, we have that:
\begin{align}
\Gamma_{0}(Y,s)
=G(Y,s,M) 
&=Y\big(s*0* G(Y, s*1,M )*G(Y, s*2,M )*\dots \big)\label{durkall}\\
&=Y\big(s*0* \Gamma_{0}(Y, s*1 )*\Gamma_{0}(Y, s*2 )*\dots \big),\notag
\end{align}
where we used \ref{druk2} in the final step.  % and hid some terms in the second step.  
Hence, the standard part $\Gamma_{0}(\cdot)$ of $G(\cdot,M)$ as provided by $\OCA$ is indeed the Gandy-Hyland functional as $\GH_{\st}(\Gamma_{0})$ follows from \eqref{durkall}.  
To prove the uniqueness as in the corollary, suppose there is another $\Gamma_{1}$ such that $\textsf{GH}_{\st}(\Gamma_{1})$ and define $F(x, M)$ as in \eqref{fucker}, but with $G(Y,s*x,M+1)$ replaced by $\Gamma_{1}(Y, s*x)$.  Now proceed as in the proof of the theorem to establish that this modified version of \eqref{fucker} satisfies \eqref{cond1} and \eqref{cond2}.  From \textsf{EBI}, we obtain that $\st(F(\langle\rangle, M))$, implying that $G(Y,s,M) =\Gamma_{1}(Y, s)$.  The latter thus holds for standard $Y^{2}\in C, s^{0}$ and $M\in \Omega$, and $\CA(\Gamma_{1})$ follows.  
\end{proof}
As noted above, the Gandy-Hyland functional is not computable (in the sense of Kleene's S1-S9) in terms of the fan functional over the total continuous functionals.  
The following corollaries express that the Gandy-Hyland functional may be computed via a term in G\"odel's $\textsf{T}$ from a modulus-of-continuity functional.  We require the following:    % as in \ref{lukl2}:
\be\tag{$\GH(\Gamma)$}
(\forall Y^{2}\in C, s^{0})\big[\Gamma(Y^{2},s^{0})= Y\big(s*0* (\lambda n^{0})\Gamma(Y, s*(n+1))\big)\big].
\ee
%\be
%(\forall Y^{2}\in C, f^{1}, g^{1})(\overline{f}\Psi(Y, f)=\overline{g}\Psi(Y, f)\notag \di Y(f)=Y(g)). \label{lukl2334}\tag{$\textsf{\textup{MPC}}(\Psi)$}
%\ee
Variations of the following relative computability result are discussed below.  
%We now prove our first relative computability result;  Variations will be established below.  
%Recall that the special fan functional $\SCF(\Theta)$ is a special case of the fan functional by Corollary \ref{scruf}.      
\begin{cor}[Term Extraction I]\label{TE}
From the proof in $\P$ of 
\be\label{nsaversion}
\ref{predruk}+\ref{STP}\di (\forall \Gamma^{3})\big[\textsf{\textup{GH}}_{\st}(\Gamma)\di \textup{\textsf{CA}}(\Gamma)   \big], 
\ee
a term $t^{4}$ can be extracted such that \textsf{\textup{E-PA}}$^{\omega*}+\QFAC^{1,0}$ proves that
\begin{align}\label{consversion}
(\forall \mu^{2} , \Theta^{3},\Gamma^{3} )\big[\big(\textsf{\textup{GH}}(\Gamma)  \wedge&\textup{\textsf{MU}}(\mu)\wedge \SCF(\Theta)\big) \di (\forall Y^{2} \in C, s^{0})\big(G(Y, s, t(Y,s, \mu,\Theta))=\Gamma(Y, s) \big)\big],
\end{align}
i.e.\ $G(Y, s, t(Y, s, \mu,\Theta ))$ is the Gandy-Hyland functional expressed in terms of Feferman's search operator and $\Theta$.  
\end{cor}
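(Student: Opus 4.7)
The plan is to follow the recipe of Corollary \ref{scruf}: rewrite the hypotheses \ref{druk2} and \ref{STP} and the conclusion \textsf{CA}$(\Gamma)$ as normal forms $(\forall^{\st}\underline{x})(\exists^{\st}\underline{y})\varphi$, pull all standard quantifiers to the front of the resulting implication, and then apply Corollary \ref{consresultcor} to extract the required term. The nonstandard statement \eqref{nsaversion} is already established by Corollary \ref{Cruxxxx}, so the work is purely a term-extraction exercise.

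First I would convert \ref{druk2} into normal form. Unfolding $\approx_{1}$ as in the proof of Theorem \ref{foor} gives $(\forall^{\st} Y \in C, f^{1})(\exists^{\st}N^{0})(\forall g^{1})[\overline{f}N = \overline{g}N \di Y(f) = Y(g)]$, and applying $\HAC_{\INT}$ (together with taking the maximum of the finite sequence of witnesses) yields a standard $\Psi^{2}$ whose internal content is exactly $\MPC(\Psi)$. In parallel, the treatment of \ref{STP} given in the proof of Corollary \ref{scruf} puts it into the normal form $(\forall^{\st}g^{2})(\exists^{\st}w^{1^{*}})B(g,w)$, from which idealisation $\textsf{I}$ and $\HAC_{\INT}$ produce a standard $\Theta$ satisfying $\SCF(\Theta)$.

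Next I would unfold \textsf{CA}$(\Gamma)$ as $(\forall^{\st} Y \in C, s^{0})(\forall N \in \Omega)[G(Y,s,N) = \Gamma(Y,s)]$. Applying the underspill argument of Theorem \ref{drifh} rewrites this as $(\forall^{\st} Y\in C, s^{0})(\exists^{\st} M^{0})(\forall N\geq M)[G(Y,s,N) = \Gamma(Y,s)]$; the standard witness $M$ is precisely what will become the term $t(Y,s,\Theta,\Psi)$ after extraction. The hypothesis $\textsf{GH}_{\st}(\Gamma)$ is already of the shape $(\forall^{\st} Y,s)C(Y,s,\Gamma)$ with $C$ internal, and the functional $\Gamma^{3}$ is carried along as a free type-$3$ parameter throughout. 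Pulling all standard quantifiers of the full implication to the front and strengthening antecedents in the familiar way, exactly as done between \eqref{fannsXXX} and \eqref{calvarie} in the proof of Corollary \ref{scruf}, the statement proved in Corollary \ref{Cruxxxx} takes the form $(\forall^{\st}\Psi,\Theta,Y,s,\Gamma)(\exists^{\st}M)D(\Psi,\Theta,Y,s,\Gamma,M)$, with $D$ internal and encoding the implication $[\MPC(\Psi) \wedge \SCF(\Theta) \wedge \textsf{GH}_{\st}(\Gamma)] \di [G(Y,s,M) = \Gamma(Y,s)]$. Applying Corollary \ref{consresultcor} extracts a term $t \in \mathcal{T}^{*}$ witnessing $M$; the standardness markers on the universal hypotheses disappear, so in particular $\textsf{GH}_{\st}(\Gamma)$ becomes $\textsf{GH}(\Gamma)$, and we obtain exactly \eqref{consversion}.

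I expect the main obstacle to be the careful bookkeeping required so that the $S_{\st}$-interpretation treats $\Psi$ and $\Theta$ as genuine functional parameters, in such a way that the hypotheses $\MPC(\Psi)$ and $\SCF(\Theta)$ appear naturally as antecedents rather than as finite-sequence witnesses. This is handled by the standard devices of taking maxima over finite sequences and of strengthening antecedents before pulling the standard quantifiers out in front, both already illustrated in the proof of Corollary \ref{scruf}. Once the quantifier prefix has been arranged correctly, the actual extraction step is purely formal and the verification that the extracted $t(Y,s,\Theta,\Psi)$ satisfies \eqref{consversion} is immediate from Corollary \ref{consresultcor}.
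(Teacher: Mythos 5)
Your plan is essentially the paper's proof: rewrite $\NPC$, $\STP$, and $\textsf{CA}(\Gamma)$ as normal forms, strengthen the antecedents to internal Skolemised forms ($\MPC(\Psi)$, $\SCF(\Theta)$, $\GH(\Gamma)$), arrange a $\forall^{\st}\exists^{\st}$-prefix, and apply Corollary~\ref{consresultcor}. The key ingredients (underspill on $\textsf{CA}(\Gamma)$, the normal form of $\STP$ from Corollary~\ref{scruf}, strengthening $\textsf{GH}_{\st}(\Gamma)$ to $\GH(\Gamma)$ \emph{before} extraction) are all present.

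There is however one point that needs to be tightened: the treatment of $\Gamma$. You first say $\Gamma$ is ``carried along as a free type-$3$ parameter,'' but two sentences later you put it inside the standard universal prefix $(\forall^{\st}\Psi,\Theta,Y,s,\Gamma)(\exists^{\st}M)D(\ldots)$. These two options give different outputs under Corollary~\ref{consresultcor}. If $\Gamma$ is in the $\forall^{\st}$-prefix, the extracted witness becomes $t(\Psi,\Theta,Y,s,\Gamma)$, i.e.\ \emph{depends on $\Gamma$}, which is strictly weaker than \eqref{consversion} (where $t$ has no $\Gamma$-argument). The paper handles this by keeping $\Gamma$ under a \emph{non-standard} universal quantifier $(\forall\Gamma)$ and then applying idealisation \textsf{I} to pull the standard existential $(\exists^{\st}N)$ through it, as in \eqref{americanland2}; this yields a $\Gamma$-independent term. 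Your ``free parameter'' idea is a valid alternative (closed terms cannot mention a free parameter, so $\Gamma$-independence comes for free), but you should then not list $\Gamma$ in the $\forall^{\st}$-block. Pick one of the two consistently.

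Two smaller imprecisions worth flagging: (a) $\HAC_{\INT}$ applied to the normal form of $\NPC$ only gives $(\forall^{\st}Y,f)A(Y,f,\Psi(Y,f))$, which is \emph{not} $\MPC(\Psi)$; the step from the $\forall^{\st}$-restricted to the full internal statement is exactly the ``strengthening the antecedent'' move, not a consequence of $\HAC_{\INT}$. (b) It is not that ``the standardness markers on the universal hypotheses disappear'' so $\textsf{GH}_{\st}(\Gamma)$ ``becomes'' $\GH(\Gamma)$; rather, one \emph{replaces} $\textsf{GH}_{\st}(\Gamma)$ by the logically stronger $\GH(\Gamma)$ \emph{before} extraction so that the antecedent is internal, and the term extraction theorem only touches the outer $\forall^{\st}\exists^{\st}$-prefix. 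These are phrasing issues, but in a proof whose whole point is quantifier bookkeeping, the direction of each strengthening matters.
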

\begin{proof}
The following formula is provable in $\P$ by Corollary \ref{Cruxxxx}:
\be\label{fornikal}
(\forall \Gamma^{3})\big[[\textsf{\textup{GH}}_{\st}(\Gamma)\wedge \STP\wedge \ref{predruk}]\di \textup{\textsf{CA}}(\Gamma)   \big].  
\ee
We apply Remark \ref{firliborn}:  Bring all the components of \eqref{fornikal} in normal form.  % and apply Corollary \ref{consresultcor} to obtain the term $t$ from the theorem.  
Note that a normal form of $\sigtoe$ is given as \eqref{normaltrank}; Let $A(f, n)$ be the internal formula in square brackets in the latter.   

\medskip

%Thus, resolving `$\approx_{1}$' in \ref{druk2} implies 
%\[
%(\forall^{\st}Y^{2}\in C, f^{1})(\forall g^{1})(\exists^{\st}N^{0})(\overline{f}N=_{0}\overline{g}N\di Y(f)=_{0}Y(g)).
%\]
%Now apply idealisation \textsf{I} to obtain, recalling Remark \ref{simply}, that
%\be\label{ne}
%(\forall^{\st}Y^{2}\in C, f^{1})(\exists^{\st}N^{0})\big[(\forall g^{1})(\overline{f}N=_{0}\overline{g}N\di Y(f)=_{0}Y(g))\big],
%\ee
%and let $A(Y, f,N)$ be the formula in square brackets. 
Secondly, let $(\forall^{\st}g^{2})(\exists^{\st} w^{1^{*}})B(g, w)$ be the normal form \eqref{teringzooi} of $\STP$  formulated in the proof of Corollary \ref{scruf}.  %  \eqref{}, which we abbreviate by .    
%and let $B(Y, N)$ be the formula in square brackets.  
Thirdly, Corollary~\ref{Cruxxxx} combined with underspill implies that for all $\Gamma$ such that $\textsf{\textup{GH}}_{\st}(\Gamma)$ we have 
\be\label{tokie}
(\forall^{\st}Y^{2}\in C,s^{0})(\exists^{\st} K)\big[(\forall N\geq K)(G(Y,s,N)=_{0}\Gamma(Y,s))\big],   
\ee
and let $C(Y, s, K, \Gamma)$ be the formula in square brackets in \eqref{tokie}.  Then \eqref{fornikal} implies that for all $\Gamma$, we have
\begin{align}\label{americanland0}
 \big[(\forall^{\st}f^{1})(\exists^{\st}N^{0})A( f, N)\wedge (\forall^{\st}W^{2})(\exists^{\st}w^{1^{*}})B(W, w) \wedge \textsf{\textup{GH}}_{\st}(\Gamma) \big]\di (\forall^{\st}Z^{2}\in C, s)(\exists^{\st}N)C(Z, s, N, \Gamma). 
\end{align}
Hence, for all $\Gamma$ and all \emph{standard} $\Theta, \Psi$, we have
\begin{align}\label{americanland}
 \big[(\forall^{\st}f^{1})A(f, \Psi(f))\wedge (\forall^{\st}W^{2})B(W, \Theta(W)) \wedge \textsf{\textup{GH}}_{\st}(\Gamma) \big]
 \di (\forall^{\st}Z^{2}\in C, s)(\exists^{\st}N)C(Z, s, N, \Gamma), 
\end{align}
as standard functionals have standard outputs for standard inputs.  
Strengthening the antecedent of \eqref{americanland} to internal formulas, we have for $\Gamma$ and \emph{standard} $\Theta, \Psi$ that
\begin{align}\label{americanland1227}
 \big[(\forall  f^{1})A(f, \Psi(f))\wedge (\forall W^{2})B(W, \Theta(W)) \wedge \textsf{\textup{GH}}(\Gamma) \big]
 \di (\forall^{\st}Z^{2}\in C, s)(\exists^{\st}N)C(Z, s, N, \Gamma). 
\end{align}
Bringing outside all standard quantifiers, \eqref{americanland1227} implies:
\begin{align}
(\forall^{\st}\Theta, \Psi, Z\in C, s)(\forall \Gamma)(\exists^{\st}N^{0})\label{americanland2}
\big[ \big((\forall f)A(f, \Psi(f))\wedge (\forall W^{2})B(W, \Theta(W))\wedge \textup{\textsf{GH}}(\Gamma) \big)\di C(Z, s, N, \Gamma)   \big], 
\end{align}
where $D(\cdots)$ is the internal formula in square brackets.  
Recall Remark~\ref{simply} and apply idealisation \textsf{I} to \eqref{americanland2}:
\begin{align}\label{liko}
&(\forall^{\st}\Theta, \Psi, Z\in C, s)(\exists^{\st}N^{0})(\forall \Gamma)D(\Theta, \Psi, Z, s, N, \Gamma).
%\big[ [(\forall Y\in C, f)A(Y, f, \Psi(Y,f)\wedge (\forall W^{2})B(W, \Theta(W))\wedge \textup{\textsf{GH}}(\Gamma) ]\di C(Z, s, N, \Gamma)   \big].\notag
\end{align}
Apply Corollary \ref{consresultcor} to `$\P\vdash \eqref{liko}$' to obtain a term $u$ such that $\textsf{E-PA}^{\omega*}+\QFAC^{1,0}$ proves
\[
(\forall \Theta, \Psi, Z\in C, s)(\exists N^{0}\in u(\Theta, \Psi, Z, s))(\forall \Gamma)D(\Theta, \Psi, Z, s, N, \Gamma), 
\]
and define $t(\Theta, \Psi, Z, s):=\max_{i<|u(\Theta, \Psi, Z, s)|}u(\Theta, \Psi, Z, s)(i)$.  Now note that 
\be\label{kuhhh}
(\forall \Theta, \Psi, Z\in C, s, \Gamma)D(\Theta, \Psi, Z, s, t(\Theta, \Psi, Z, s), \Gamma), 
\ee
due to the monotone behaviour of $C(Z, s, \cdot, \Gamma))$.
Bringing the $Z$ and $s$ quantifiers into the consequent of $D$ in \eqref{kuhhh}, we obtain that for all $\Theta, \Psi, \Gamma$:
\[
 [(\forall f)A(f, \Psi(f))\wedge \SCF(\Theta)\wedge \textup{\textsf{GH}}(\Gamma) ]\di (\forall Z^{2}\in C,s^{0})C(Z, s, t(\Theta, \Psi, Z, s), \Gamma).    
\]
%We now show that we can explicitly define $\Psi$ as in $(\forall f)A(f, \Psi(f))$ and $\Theta$ as in $\SCF(\Theta)$ in terms of $\mu^{2}$ as in $\MU(\mu)$.  
%This implies \eqref{consversion}, and finishes the proof.  First of all, 
Finally, we note that $\Psi$ as in $(\forall f)A(f, \Psi(f))$ is Feferman's second search operator as in $\MU_{2}(\nu)$.  In the proof of Theorem~\ref{seconde}, the latter functional is explicitly defined in terms of Feferman's search operator as in $\MU(\mu)$.    
%
%\medskip
%
%Finally, we show that $\mu$ as in $\MU(\mu)$ can be used to explicitly define $\Theta$ as in $\SCF(\Theta)$.  Recall that the former provides arithmetical comprehension as in \eqref{arco}.  Hence, using such $\mu$, one can decide whether a binary tree $T$ is finite, and provide an upper bound if so.  
%This upper bound provides the number $\Theta(g)(1)$ in the consequent $\SCF(\Theta)$, and makes the latter true if $T$ is finite.  If $T$ is infinite, then $\mu$ can be used to define (explicitly in $T$ and $\mu$) a path through $T$ (See \cite{kooltje}*{\S3}).  This path is then included in $\Theta(g)(2)$, and thus falsifies the antecedent of $\SCF(\Theta)$ in case $T$ is infinite.  Hence, we obtain $\SCF(\Theta)$ in both cases, and we are done.       
%
%Remark \ref{reversal}, the latter yields $\mu^{2}$ such that $\MU(\mu)$.  In turn, the latter yields  
%
%
%Indeed, $(\forall W^{2})B(W, \Theta(W))$ in $D$ expresses that $\Theta$ is the special fan functional as in $\SCF(\Theta)$.  
% \eqref{consversion}.  % and we are done.      
\end{proof}
Recall Remark \ref{simply} and note that we performed a similar procedure as in the former remark (involving a term of type $0^{*}$ and its maximum) to obtain \eqref{kuhhh} from \eqref{liko} after applying Corollary \ref{consresultcor} to the latter.  Hereon-after, we will sometimes skip the (obvious) step involving the maximum when applying Corollary \ref{consresultcor} too.   
\begin{cor}\label{forlk}
We can obtain a version of \eqref{consversion} with Feferman's search operator $\MU(\mu)$ replaced by $\MPC(\Psi)$, i.e.\ the Gandy-Hyland functional can be expressed 
in terms of a modulus-of-continuity functional and $\Theta$.  
\end{cor}
\begin{proof}
From $\Psi$ as in $\MPC(\Psi)$, one can define a \emph{discontinuous} type two functional (See \cite{exu} and \cite{beeson1}*{Theorem 19.1}). 
By \cite{kohlenbach2}*{Prop.\ 3.7} and \cite{kohlenbach3}*{\S3}, a discontinuous type two functional can be used to define $\mu^{2}$ as in $\MU(\mu)$, 
using choice functionals originating from the application of $\QFAC^{1,0}$.    
\end{proof}
We now discuss a possible strengthening of the above results.  
\begin{rem}[Similar results]\label{rukker}\rm
It is an interesting question if the condition `$Y^{2}\in C$' in $\NPC$ can be weakened.  First of all, we \emph{cannot} drop this condition:  As shown in the proof of Corollary \ref{seconde}, \ref{druk2} gives rise to a modulus-of-continuity functional $\Psi$ as in \ref{lukl2}.  
From the latter functional, one constructs a \emph{discontinuous} type two functional (See \cite{exu} and \cite{beeson1}*{Theorem 19.1}), which contradicts $\MPC(\Psi)$ without the restriction `$Y^{2}\in C$'.  Secondly, going through the proofs in this section, it seems that `$Y^{2}\in C$' can be replaced by any \emph{internal} formula $D(Y^{2})$, as long as the latter formula blocks the aforementioned contradiction in the same way as `$Y^{2}\in C$' does.
\end{rem}
%In light of the previous remark and \cite{kohlenbach2}*{Prop.\ 3.7}, the existence of a modulus-of-continuity functional implies the Halting problem.  
In Section \ref{laiba}, we obtain relative computability results similar to \eqref{consversion} with weaker antecedents.  
We now discuss how the \emph{consequent} of the above results can be strengthened.  
\begin{rem}[Similar results II]\rm
It is a natural question if the consequent of \eqref{consversion} is the best possible.  A careful study of the proofs of Theorem \ref{drag} and Corollary \ref{Cruxxxx} reveals the existence of standard $\Gamma^{3}$ such that $\GH_{\st}(\Gamma)$ and
\begin{align}\label{groin}
(\forall^{\st}Y^{2}\in C, \alpha^{1})(\forall N^{0}, M^{0}\in \Omega)(\forall s^{0})(\overline{\alpha}M=_{0}\overline{s*00\dots}M\di \Gamma(Y, s)=_{0}G(Y, s, N) ).
\end{align}
Indeed, if $s^{0}$ is standard, then the associated instance of \eqref{groin} follows from Corollary \ref{Cruxxxx}.  If $s^{0}$ is nonstandard, it has $\alpha$ as a standard part and \eqref{groin} follows by nonstandard continuity as in $\NPC$.  Applying term extraction to a variation of \eqref{nsaversion} involving \eqref{groin}, one obtains a term $u$ which computes the Gandy-Hyland functional `more uniformly' than the term $t$ in \eqref{consversion}, in that $u$ provides \emph{one} stopping condition for every initial segment $s^{0}$ of the sequence $\alpha^{1}$.  
We shall derive $\NPC$ from \eqref{groin} in Section \ref{ballen3}.  
\end{rem}
\begin{rem}[Similar results III]\label{flir}\rm
We now discuss whether the previous results go through inside a fragment of $\P$.
The `good' news is that our term extraction results, namely Theorem \ref{consresult} and Corollary \ref{consresultcor}, do not really depend on the presence of full Peano arithmetic.  In particular, it is an easy verification that the proof of \cite{brie}*{Theorem 7.7} goes through for any fragment of \textsf{E-PA}$^{\omega{*}}$ which includes \textsf{EFA}, sometimes also called $\textsf{I}\Delta_{0}+\textsf{EXP}$.
The `bad' news is that in light of \cite{escaleert}*{\S5}, it seems that in order to define the canonical approximations $G$, one cannot avoid invoking a principle (slightly) stronger than primitive recursive arithmetic (\cite{buss}*{\S1.2.10}).
\end{rem}
\begin{rem}[Similar results IV]\label{flir2}\rm
We now discuss whether the use of $\NPC$ in Theorem \ref{drag} is necessary.  On one hand, it seems one can replace the use of $\NPC$ in the proof of the latter by: nonstandard uniform continuity as in
\be\label{wkb}
(\forall^{\st} Y^{2}\in C, h^{1})(\forall f^{1}, g^{1}\leq_{1}h )({f}\approx_{1}{g}\di Y(f)=_{0}Y(g)).   
\ee
and nonstandard `weak' continuity as follows: 
\be\label{wka}
(\forall^{\st}f^{1}, Y^{2}\in C)(\forall g^{1})(f\approx_{1}g\di \st(Y(g))).  
\ee
Applying the template from Remark \ref{firliborn}, \eqref{wkb} and \eqref{wka} give rise to the \emph{fan functional} and the \emph{weak continuity functional} (See \cite{bergolijf} for the latter).  
On the other hand, the proof of Theorem \ref{ebimaki} suggests that external bar induction \textsf{EBI} requires \emph{Transfer}, and the latter gives rise to Feferman's search operator.  
Hence, it seems adopting \eqref{wkb} and \eqref{wka} does not yield a version of \eqref{consversion} not involving Feferman's search operator.  % from the assocn} not 
\end{rem}

%We first discuss the connection between arithmetical comprehension and the existence of a modulus-of-continuity functional.  % \emph{without} bringing in Nonstandard Analysis.  
%\begin{rem}\rm
%Finally, we sketch how the existence of a modulus-of-continuity functional as in $(\exists \Psi)\textsf{MPC}(\Psi)$ is equivalent to arithmetical comprehension as in $(\mu^{2})$
%For the forward implication, consider \cite{exu}*{p.\ 156} where a discontinuous functional is defined (explicitly) from a modulus-of-continuity functional.  
%In turn, the existence of the former is proved to yield $(\mu^{2})$ in \cite{kohlenbach2}*{Prop.~3.7 and 3.9}.  On the other hand, following the proof of \cite{kohlenbach4}*{Prop.\ 4.7}, one can (explicitly) define a modulus-of-continuity functional in terms of arithmetical comprehension $(\mu^{2})$.  
%\end{rem}
%In light of the previous remark, the term $t$ in the consequent of \eqref{tinkel} can be modified to obtain $(\mu^{2})$, modulo the use of the quantifier-free axiom of choice as in the proofs of \cite{kohlenbach2}*{Prop.\ 3.7 and 3.9}; Similar modifications for \eqref{consversion} are possible.  
In conclusion, we have proved in Theorem \ref{drag} that the functional $G(\cdot, M)$ and $\Gamma(\cdot)$ are equal for standard inputs and nonstandard $M^{0}$.  
From this proof, we have extracted a term from G\"odel's \textsf{T} which computes the $\Gamma$-functional as a function of a modulus-of-continuity functional.   
While these relative computability results are not necessarily deep or surprising, our methodology constitutes the true surprise:  That from the proof of Theorem~\ref{drag}, in which no attention to effective content is given, \emph{and involving Nonstandard Analysis}, the term $t$ as in Corollary \ref{TE} may be extracted.   
We prove variations of these results in Sections \ref{laiba} to \ref{quatre}, establishing the robustness of our approach.  

\subsection{From relative computability to Nonstandard Analysis}\label{relletje}
In the previous section, we showed how to extract relative computability results like \eqref{consversion} from corresponding nonstandard statements like \eqref{nsaversion}.  % and \eqref{fornikal}.  
Now, it is a natural `Reverse Mathematics style' question whether it is possible to re-obtain the nonstandard implication
%\be\label{nsaversion}
%\ref{druk2}+\ref{STP}\di  (\forall^{\st}\Gamma^{3})\big[\textsf{\textup{GH}}^{\st}(\Gamma)\di \textup{\textsf{CA}}(\Gamma)   \big] 
from (a variation of) the associated relative computability result.  % \eqref{consversion}.  

\medskip

Another natural question is whether we can obtain a version of \eqref{consversion} with weaker assumptions;  Indeed, to compute $\Gamma(Y, s)$ it should -intuitively speaking- suffice to have a functional which (only) behaves like the special fan and modulus-of-continuity functional for $Y$ (and functionals explicitly defined from the latter).   

\medskip

To answer these two questions, we define the \emph{Hebrandisation} of \eqref{consversion} as follows.
Let $\SCF(\Theta, g)$ be $\SCF(\Theta)$ with the leading quantifier involving $g$ dropped.  
Let $\MU_{2}(\nu, f)$ be $\textsf{MU}_{2}(\nu )$ from Section \ref{seqtor} with the leading quantifier involving $f$ dropped.  
Let \textsf{GH}$(\Gamma, Y, s)$ be $\textsf{GH}(\Gamma )$ with the quantifier involving $Y$ and $s$ dropped.  
\begin{definition}[Herbandisation]\label{HERWEG}\rm
Let $i^{3\di 2^{*}}$ and $o^{4}$ be terms from the language of $\textsf{E-PA}^{\omega*}$. 
The \emph{Herbrandisation} \textsf{HER}$(i,o)$ of \eqref{fornikal} is the statement that for all $\Xi=(\Theta, \nu)$ and all $\Gamma^{3},Y^{2}\in C, s^{0}$
\begin{align}
 \big[\big(\forall (Z^{2}\in C, t^{0})\in i(Y, s, \Xi)(1)\big))\textsf{\textup{GH}}(\Gamma, Z, t) \wedge(\forall W^{2}\in i(Y, s, \Xi)(2))&\SCF(\Theta, W)\notag \wedge \big(\forall f^{1} \in i(Y, s, \Xi)(3)\big)\textup{\textsf{MU}}_{2}(\nu, f)\big]\\
& \di (\forall M\geq  o(Y,s, \Xi))\big(G(Y, s,M)=\Gamma(Y, s) \big).\notag 
\end{align} 
\end{definition}
Intuitively speaking, \textsf{HER}$({i,o})$ expresses that to approximate $\Gamma(Y, s)$ via its canonical approximation $G$ involving the term $o$, it suffices that $\Theta$ and Feferman's second search operator satisfy their usual definition \emph{on the restriction of their domains provided by $i$}.  
By the following theorem, the nonstandard version \eqref{fornikal} is `meta-equivalent' to its Herbrandisation in that a proof of the former can be converted into a proof of the latter, and vice versa  
\begin{theorem}\label{TEH}
From the proof of \eqref{fornikal} in $\P$, two terms $i, o$ can be extracted such that \textsf{\textup{E-PA}}$^{\omega*}+\QFAC^{1,0}$ proves $\textsf{\textup{HER}}(i, o)$.  
Moreover, if there are terms $i, o$ such that \textsf{\textup{E-PA}}$^{\omega*}+\QFAC^{1,0}$ proves $\textsf{\textup{HER}}(i, o)$, then $\P$ proves \eqref{fornikal}.  
\end{theorem}
\begin{proof}
For the first part of the theorem, consider the proof of Corollary~\ref{TE} and note that \eqref{americanland} implies (with the same notations as in the aforementioned proof):
\begin{align}
(\forall^{\st}\Theta, \Psi, Z\in C, s)(\forall \Gamma^{3})(\exists^{\st}N^{0},  f^{1}, g^{2}, V^{2}\in C, t^{0})\label{dilllo}
\big[ [A(f, \Psi(f))\wedge \SCF(\Theta, g)\wedge \textup{\textsf{GH}}(\Gamma, V, t) ]  \di C(Z, s, N, \Gamma)   \big],
\end{align}
by pushing outside, as far as possible, the standard quantifiers in \eqref{americanland}.  
Now apply idealisation \textsf{I} to \eqref{dilllo} to obtain:
\begin{align*}
(\forall^{\st}\Theta, \Psi, Z\in C, s)(\exists^{\st}W)(\forall \Gamma^{3})(\exists (N, f, g, V\in C, t)\in W)%\label{nodelight}
\big[ [A( f, \Psi(f))\wedge \SCF(\Theta,g)\wedge \textup{\textsf{GH}}(\Gamma, V, t) ]  \di C(Z, s, N, \Gamma)   \big],
\end{align*}
and apply Corollary \ref{consresultcor} to obtain a term $w$ such that $\textsf{E-PA}^{\omega*}+\QFAC^{1,0}$ proves for all $ \Theta, \Psi, Z\in C, s$ that  %  t $(\exists W\in w(\Theta, \Psi, Z, s))$ in the previous formula.  
\begin{align*}
&(\exists W\in w(\Theta, \Psi, Z, s))(\forall \Gamma)(\exists (N, f, g, V\in C, t)\in W)
\big[ [A( f, \Psi(f))\wedge \SCF(\Theta, g)\wedge \textup{\textsf{GH}}(\Gamma, V, t) ]  \di C(Z, s, N, \Gamma)   \big].
\end{align*}
Now define the term $o$ as follows: $o(\Theta, \Psi, Z, s)$ is the maximum of the components of $w(\Theta , \Psi, Z, s )$ pertaining to $N$;  
Similarly, define the terms $i(\Theta, \Psi, Z, s)(j)$ for $j=1$ (resp.\ $j=2$ and $j=3$) to be the finite sequence of all components of $w$ pertaining to the variable $  f$ (resp.\ the variable $g$ and the variables $V, t$).  
With these notations, the previous implies for functionals $\Xi=(\Theta, \Psi)$ and $Z^{2}\in C, s^{0}$ that:
\begin{align}
 \big[\big(\forall (V\in C, t^{0})\in i(Z, s, \Xi)(3)\big))\textsf{\textup{GH}}(\Gamma, V, t) \wedge(\forall g^{2}\in i(Z, &s, \Xi)(2))\SCF(\Theta, g)
 \wedge \big(\forall f^{1} \in i(Z, s, \Xi)(1)\big)A( f, \Psi(f))\big]\notag\\
 &\di (\forall M\geq  o(Z,s, \Xi))\big(G(Z, s,M)=\Gamma(Z, s) \big).\label{firker} 
\end{align}
Note that \eqref{firker} is $\textsf{HER}(i,o)$ with slightly different notations.  
%One easily obtains $\textsf{HER}(i,o)$ from \eqref{firker} as follows: 
%In the final part of the proof of Theorem \ref{seconde}, one defines $\Psi$ as in $(\forall f^{1})A( f, \Psi(f))$ in terms of $\mu^{2}$ as in $\MU(\mu)$.
%From the latter proof, one obtains a term $i'$ such that from $\mu^{2}$ as in $(\forall f^{1}\in i'(Z, s, \Xi))\MU(\mu, f )$, we can define $\Psi$ as in $\big(\forall f^{1} \in i(Z, s, \Xi)(1)\big)A( f, \Psi(f))$.
%In particular, the term $i'$ involves 

\medskip

For the second part, if there are terms $i, o$ such that \textsf{\textup{E-PA}}$^{\omega*}\vdash\textsf{\textup{HER}}(i, o)$, then $\P\vdash\big[\textsf{\textup{HER}}(i, o)\wedge\st(i)\wedge \st(o)\big]$ by the second standardness axiom from Definition~\ref{debs}.  Thus, for \emph{standard} $\Xi=(\Theta, \nu)$ and standard $Y^{2}\in C, s^{0}$, the terms $i(Y, s, \Xi)$ and $o(Y, s, \Xi)$ are standard by the third standardness axiom from Definition~\ref{debs}, and \textsf{HER}$(i,o)$ implies the following weakening (for any $\Gamma^{3}$ and standard $\Theta, \nu, Y^{2}\in C, s$):   
\begin{align}
 \big[(\forall^{\st} Z^{2}\in C, t^{0})\textsf{\textup{GH}}(\Gamma, Z, t) \wedge(\forall^{\st} W^{2})\SCF(\Theta, W)\label{mnklten}
 \wedge (\forall^{\st}  f^{1})\textup{\textsf{MU}}_{2}(\nu, f)\big]\di (\forall M\in \Omega)\big(G(Y, s,M)=\Gamma(Y, s) \big).
\end{align}
Applying $\HAC_{\INT}$ to \eqref{normaltrank}, $\sigtoe$ yields $(\exists^{\st}\nu)(\forall^{\st}  f^{1})\textup{\textsf{MU}}_{2}(\nu, f)$.  % by weakening.  
Similarly, $\STP$ implies $(\exists^{\st}\Theta^{3})(\forall^{\st}Y^{2})\SCF(\Theta, Y)$ by the proof of Corollary \ref{scruf}.   
Hence, $\ref{druk2}+\sigtoe+\STP$ implies the second and third conjunct of the antecedent of \eqref{mnklten}, which yields \eqref{fornikal}, and we are done.    
\end{proof}
Thus, we proved that from a proof of \eqref{fornikal}, terms $i, o$ from G\"odel's \textsf{T} can be extracted satisfying the \emph{Herbrandisation} of \eqref{fornikal}, i.e.\ $o$ computes $\Gamma(Y,s)$ as a function of \emph{approximations enforced by $i$} of the special fan functional and Feferman's second search functional.   
Furthermore, the nonstandard version \eqref{fornikal} in turn follows from its Herbrandisation, i.e.\ the latter are `equivalent in the meta-theory' in the sense of the previous theorem.  
%\emph{they have the same computational content} in the sense of Theorem~\ref{TEH}.

\medskip

Obviously, the Herbrandisation $\HER(i,o)$ of \eqref{fornikal} is much more complicated than \eqref{consversion}.  
This seems to be due to the fact that Feferman's second search operator and the special fan functional can be defined explicitly in terms of Feferman's search operator, 
while the same seems impossible for the restrictions of the latter imposed by the term $i$ in $\HER(i,o)$.  Intuitively speaking, one needs to apply Feferman's search operator 
`infinitely many times' to obtain Feferman's second search operator.  

\medskip

In conclusion, the correspondence exhibited in Theorem \ref{TEH} establishes a direct two-way connection between the field Computability (in particular theoretical computer science) and the field Nonstandard Analysis.      
Indeed, while the relative computability result $\textsf{HER}(i,o)$ could arguably still be passed off as (theoretical) computer science, experience bears out that the nonstandard version \eqref{nsaversion} does not count as such among computer scientists.  We could obtain the (meta-equivalent) Herbrandisation for every nonstandard theorem proved in this paper, but we will not do so in the next sections due to space constraints.

\subsection{From Nonstandard Analysis to relative computability II}\label{laiba}
In this section, we obtain a relative computability result for the Gandy-Hyland functional, \emph{not involving} Feferman's search operator.  
To this end, we shall establish that the proofs of Theorem \ref{drag} 
and Corollary~\ref{Cruxxxx} also go through `in a pointwise fashion', to be understood in the sense of Theorem~\ref{ragnor}.  

\medskip

Recall $\GH(\Gamma, Y, s)$ defined in Section \ref{relletje}, and define $\GH(\Gamma, Y)$ as $(\forall s^{0})\GH(\Gamma, Y, s)$.  
Let $\NSC(Y)$ be:  % the following formula:
\be\tag{$\textup{\textsf{NPC}}(Y)$}
(\forall^{\st}f^{1})(\forall g^{1})(f\approx_{1}g \di Y(f)=_{0}Y(g)), 
\ee
i.e.\ \ref{druk2} with the `$(\forall^{\st} Y^{2}\in C)$' dropped, and let $\ST(\Gamma, Y)$ be $(\forall^{\st}s^{0})(\st(\Gamma(Y,s)))$.   
\begin{theorem}\label{ragnor}
The system $\P+\STP$ proves that for all $\Gamma^{3}$ and $Y^{2}$
\be\label{drfre}
\big[\NSC(Y) \wedge \GH(\Gamma, Y) \wedge \textsf{\textup{ST}}(\Gamma, Y)\big]\di (\forall^{\st}s^{0})(\forall N\in \Omega)(\Gamma(Y,s)=G(Y, s, N)). 
\ee
\end{theorem}
\begin{proof}
Fix $\Gamma^{3}$ and $Y^{2},$ as in the antecedent of \eqref{drfre}.  
As \textsf{EBI} is not available, we shall use $\BI_{0}^{\st}$, which \emph{is} available by Theorem \ref{ebimaki2}.  
Thus, fix standard $s^{0}$ and $N\in \Omega$ and consider $Q(x)\equiv[G(Y,s*x,N)=_{0}\Gamma(Y,s*x)]$.

\medskip

To prove the first conjunct of \eqref{cond2b}$^{\st}$, note that for $M\in \Omega$ and standard $\alpha^{1}$, we have $G(Y, \overline{\alpha}M, N)=Y(\alpha)=\Gamma(Y, \overline{\alpha}M)$, by the nonstandard continuity of $Y^{2}$ and the fact that $(\overline{\alpha}M*\gamma)\approx_{1} \alpha$ for any $\gamma^{1}$.  
As in the proof of Theorem \ref{drag}, one obtains the first conjunct \eqref{cond2b}$^{\st}$ using underspill.   

\medskip

To prove the second conjunct of \eqref{cond2b}$^{\st}$, fix standard $t^{0}$ and assume $Q(t*\langle x\rangle)$ for all standard $x^{0}$.  
By definition, we have $G(Y,s*t*\langle x\rangle,N)=\Gamma(Y,s*t*\langle x\rangle)$ for any standard $x^{0}$, and these numbers are standard by $\ST(\Gamma, Y)$.  
By $\STP$, the sequence $s*t*0*(\lambda n)G(Y, s*t*(n+1)),N)$ has a standard part, say $\gamma^{1}$.  Thus:
\[
s*t*0*(\lambda n)G(Y, s*t*(n+1)),N) \approx_{1} \gamma \approx_{1}s*t*0*(\lambda m)\Gamma(Y, s*t*(m+1)), 
\]
and the nonstandard continuity of $Y$ yields:
\begin{align}
G(Y,s*t,N)
&=Y(s*t*0*(\lambda n)G(Y, s*t*(n+1)),N) \label{frihhh}\\
&=Y(\gamma)=Y(s*t*0*(\lambda m)\Gamma(Y, s*t*(m+1)))=\Gamma(Y,s*t),\notag
\end{align}
which implies that $Q(t)$, and the second conjunct of \eqref{cond2b}$^{\st}$ follows.  
Hence, we have proved \eqref{cond2b}$^{\st}$, yielding $Q(\langle\rangle)$ and $G(Y,s,N)=_{0}\Gamma(Y,s)$ as required, and \eqref{drfre} follows.
\end{proof}
We need the following for Corollary \ref{boei}, where $\PCM(Y^{2},Z^{2})$ expresses that $Z$ is a modulus of continuity for $Y$.
\begin{align}\tag{$\PCM(Y, Z)$}
&(\forall f^{1}, g^{1})(\overline{f}Z(f)=_{0}\overline{g}Z(f)\di Y(f)=_{0}Y(g))\\
\tag{$\GHU(\Gamma, Y, H)$}
&(\forall  s^{0})\big[\Gamma(Y, s)=Y(s*0*(\lambda n)\Gamma(Y, s*(n+1)))\leq H(Y, s)\big],
\end{align}
\vspace{-0.75cm}
%We have the following theorem.  
%we have \eqref{CB} and $N^{0}$ is additionally given by $W(f)$.   
\begin{cor}[Term Extraction II]\label{boei}
From the proof in Theorem \ref{ragnor}, a term $t$ can be extracted s.t.\ $\textsf{\textup{E-PA}}^{\omega*}+\QFAC^{1,0}$ proves for $\Xi=(H^{1},Z^{2},\Theta^{3})$ and $\Gamma^{3}, Y^{2}$
\[
\big[\PCM(Y,Z)\wedge \SCF(\Theta) \wedge \GHU(\Gamma, Y,H) \big]\di (\forall s)(\forall N\geq t(s,\Xi))(\Gamma(Y,s)=G(Y, s, N)),  
\]
i.e.\ the Gandy-Hyland functional $\Gamma$ at $Y$ can be approximated via a modulus of continuity of $Y$, the special fan functional, and an upper bound for $\Gamma(Y,\cdot)$.  
\end{cor}
\begin{proof}
The template from Remark \ref{firliborn} applies.  We now sketch how one obtains a normal form for all principles in $\STP\di\eqref{drfre}$;    The normal form \eqref{teringzooi} of $\STP$ has been studied in the proof of Corollary \ref{scruf}.  The normal form of $\NSC(Y)$, obtained in the same way as the normal form \eqref{kunt3} of \ref{kunt} in the proof of Theorem \ref{foor}, is     
\[
(\forall^{\st} f^{1})(\exists^{\st}N^{0})\big[(\forall g^{1})(\overline{f}N=_{0}\overline{g}N\di Y(f)=_{0}Y(g))\big],
\]
and applying $\HAC_{\INT}$, one sees how the modulus of continuity of $Y$ comes about.  
Finally, $\ST(\Gamma, Y)$ has the following normal form: $(\forall^{\st}s^{0})(\exists^{\st}n^{0})(\Gamma(Y, s)\leq_{0} n)$, and applying $\HAC_{\INT}$ one observes where the upper bound $H$ comes from.  The normal form for the consequent of \eqref{drfre} is as follows:
\be\label{krifoppp}
 (\forall^{\st}s^{0})(\exists^{\st}n^{0})(\forall N\geq n)(\Gamma(Y,s)=G(Y, s, N)),
\ee
and follows by underspill.  In step \eqref{cruz} from the template in Remark~\ref{firliborn}, idealisation \textsf{I} needs to be applied to pull the `$(\exists^{\st}n)$' quantifier from \eqref{krifoppp} through the quantifier $(\forall \Gamma^{3}, Y^{2})$ from \eqref{drfre}, taking into account Remark \ref{simply}.  
% by underspill as in the previous proofs.      
\end{proof}
Following Definition \ref{HERWEG}, it is easy to define the Herbrandisation of \eqref{drfre} and obtain a result similar to Theorem~\ref{TEH}.  
In particular, this Hebrandisation tells us on which part of Baire space the functional $Y$ should be continuous (with modulus $Z$) to guarantee that $\Gamma$ and $G$ coincide at $Y$.  

\medskip

In conclusion, we have obtained a `pointwise' relative computability result for the Gandy-Hyland functional \emph{not involving Feferman's search operator}.  In particular, the term $t$ from Corollary \ref{boei} allows us to compute approximations of the Gandy-Hyland functional in terms of the special fan functional for any functional $Y$ with a modulus of (pointwise) continuity, and a given upper bound on $\Gamma(Y, \cdot)$.  Finally, the statement \emph{every continuous functional on Baire space has a modulus of pointwise continuity}, is rather weak by \cite{kohlenbach4}*{Prop.\ 4.4 and 4.8}.

\subsection{From Nonstandard Analysis to relative computability III}\label{ballen3}
In this section, we show that the approximation of the Gandy-Hyland functional as in \eqref{groin} implies $\NPC$;  We derive the associated relative computability result in which a term from G\"odel's $\textsf{T}$ expresses a modulus-of-continuity functional in terms of an `approximation' functional as in \eqref{groin} for the Gandy-Hyland functional.  We also sketch a `pointwise' result similar to what was established in Section \ref{laiba}.  
%\subsubsection{Results}\label{kaiba}
%We prove the results mentioned in the introduction to Section \ref{ballen3}.
%To this end, 

\medskip

First of all, consider the following principle and theorem.
\begin{princ}[\textsf{GHS}$_{\ns}$]\label{GHS}
There is $\Gamma^{3}$ such that $\GH_{\st}(\Gamma)$ and
\[
(\forall^{\st}Y^{2}\in C, \alpha^{1})(\forall N^{0}, M^{0}\in \Omega)(\forall s^{0})(\overline{\alpha}M=_{0}\overline{s*00\dots}M\di \Gamma(Y, s)=_{0}G(Y, s, N) ).
\]
\end{princ}
\begin{theorem}\label{copycat}
In $\P$, we have $ \textup{\textsf{GHS}}_{\ns}\di \ref{druk2}$.  % the following equivalence:
%\be\label{topcheryy}
%
%\ee
\end{theorem}
\begin{proof}
%The forward implication is immediate by Corollary \ref{Cruxxxx}.  
In a nutshell, to obtain \ref{druk2} from \textsf{GHS}$_{\ns}$, one computes for standard $Y^{2}\in C$ the numbers $Y(\alpha)$ and $Y(\beta)$ using \textsf{GHS}$_{\ns}$ and notes that they are identical if $\alpha\approx_{1}\beta$ for standard $\alpha^{1}$ and any $\beta^{1}$.
In more detail, we first apply underspill to the second conjunct of $\textsf{GHS}_{\ns}$, to obtain that
\[
(\forall^{\st}Y^{2}\in C, \alpha^{1})(\exists^{\st} K^{0})(\forall N^{0}, M^{0}\geq K)(\forall s^{0})(\overline{\alpha}M=_{0}\overline{s*00\dots}M\di \Gamma(Y, s)=_{0}G(Y, s, N) ). 
\]
Applying \HAC$_{\textup{\INT}}$ to the previous formula yields a standard functional $\Xi^{3}$ such that %for standard $Y^{2}\in C, \alpha^{1}$, we have
\begin{align}
(\forall^{\st}Y^{2}\in C, \alpha^{1})(\forall N^{0}, M^{0}\geq \Xi(Y, \alpha))
(\forall s^{0})(\overline{\alpha}M=_{0}\overline{s*00\dots}M\di \Gamma(Y, s)=_{0}G(Y, s, N) ), \label{drink}
\end{align}
keeping in mind Remark \ref{simply}.  Now fix standard $Y^{2}\in C$ and standard $\alpha^{1}$, and any $\beta^{1}$ such that $\alpha\approx_{1}\beta$.  Since $Y^{2}\in C$, there are numbers $N_{0}^{0}, M_{0}^{0}$ such that 
\begin{align}\label{berry}
&(\forall \gamma^{1})(\overline{\gamma}M_{0}=_{0}\overline{\beta}M_{0}\di Y(\gamma)=_{0}Y(\beta)).\\
\label{berry2}
&(\forall \gamma^{1})(\overline{\gamma}N_{0}=_{0}\overline{\alpha}N_{0}\di Y(\gamma)=_{0}Y(\alpha)).
\end{align}
If $M_{0}$ or $N_{0}$ is standard, we have $Y(\alpha)=Y(\beta)$, and we are done.  
%Otherwise, we have $Y(\overline{\alpha}N_{0}*00\dots)=Y(\alpha)$ and $Y(\overline{\beta}M_{0}*00\dots)=Y(\beta)$ by \eqref{berry} and \eqref{berry2}. 
In case $M_{0}$ and $N_{0}$ are nonstandard, we have $Y(\overline{\alpha}N_{0}*00\dots)=Y(\alpha)$ and $Y(\overline{\beta}M_{0}*00\dots)=Y(\beta)$ by \eqref{berry} and \eqref{berry2}.  Also, $\Xi(Y, \alpha)$ is standard, yielding that $Y(\overline{\alpha}\Xi(Y, \alpha)*00\dots)=Y(\overline{\beta}\Xi(Y, \alpha)*00\dots)$ by extensionality.  
%$\Gamma(Y, \overline{\alpha}N_{0})=Y(\alpha)$ and $\Gamma(Y, \overline{\beta}M_{0})=Y(\beta)$ by \eqref{berry} and \eqref{berry2}, 
%and the definition of $\Gamma$ as in $\GH(\Gamma)$.
% and extensionality.
%Now, $\Xi(Y, \alpha)$ is standard, yielding that $Y(\overline{\alpha}\Xi(Y, \alpha))=Y(\overline{\beta}\Xi(Y, \alpha))$ by extensionality.  
%Furthermore, we have:
The following equalities now follow easily:  % from the above.  
\begin{align}
Y(\alpha)=Y(\overline{\alpha}N_{0}*00\dots)=G(Y,\overline{\alpha}N_{0}, N_{0})
&= \Gamma(Y,\overline{\alpha}N_{0})\label{dung}\\
&=G(Y,\overline{\alpha}N_{0}, \Xi(Y,\alpha))\label{dung2}\\
&=Y(\overline{\alpha}\Xi(Y,\alpha)*00\dots)\notag\\%\label{nozel}\\
&=Y(\overline{\beta}\Xi(Y,\alpha)*00\dots)\notag\\%\label{nozel2}\\
%&=H(Y,\overline{\alpha}M, \Xi(Y,\alpha)) \\
%&=\Gamma(Y,\overline{\alpha}\Xi(Y,\alpha))\notag\\
&=G(Y,\overline{\beta}M_{0}, \Xi(Y,\alpha))\notag\\
&=\Gamma(Y,\overline{\beta}M_{0}) \notag\\
&=G(Y, \overline{\beta}M_{0}, M_{0})\notag\\
%&=H(Y,\overline{\beta}M_{0}, M_{0})\notag\\
&=Y(\overline{\beta}M_{0}*00\dots)=Y(\beta).\notag%\label{dorpje}
\end{align}
For instance, to obtain the equality between \eqref{dung} and \eqref{dung2}, one applies \eqref{drink} for $s=\overline{\alpha}N_{0}$ and $N=\Xi(Y, \alpha)$ and $M=N_{0}$.  
The remaining equalities are proved similarly, and we are done.    
%Hence, we obtain \ref{druk2} assuming $\textsf{GHS}$, and we are done.  
\end{proof}
%The previous proof could be shortened by strengthening  $\GH(\Gamma)$
%We could weaken $\GHS_{\ns}$ to include $\GH_{\st}(\Gamma)$ rather than $\GH(\Gamma)$.  
%This modified version still implies $\NPC$ via a more complicated proof, and the latter is in fact equivalent to the former.  
%\begin{cor}
%In $\P+\STP$ we have $\textsf{\textup{GHS}}_{\ns}\asa \NPC$.  
%\end{cor}
Now define \textsf{GHS}$(\Psi, \Gamma)$ as the following formula:
\[
(\forall Y^{2}\in C, \alpha^{1}, N, M\geq \Psi(Y, \alpha), s^{0})\big(\overline{\alpha}M=\overline{s*00\dots}M\di G(Y, s, N)=_{0}\Gamma(Y, s)) \big), 
\]
which expresses that $\Psi$ witnesses the canonical approximation of $\Gamma$ via $G$ in a `more uniform way' than in $\textsf{CA}(\Gamma)$.  
\begin{cor}[Term Extraction III]\label{TvE}
From the proof \textup{`}$\P\vdash \textsf{\textup{GHS}}_{\ns}\di \ref{druk2}$', a term $t$ can be extracted such that \textsf{\textup{E-PA}}$^{\omega*}+\QFAC^{1,0}$ proves for all $ \Gamma^{3}, \Psi^{3}$ that
\begin{align}\label{tinkel}
\big[\GH(\Gamma)\wedge \textsf{\textup{GHS}}(\Psi, \Gamma)  \big] \di \textup{\textsf{MPC}}(t( \Psi)). 
\end{align}
%i.e.\ $G(Y, s, t(Y, s,\Omega, \Upsilon ))$ is the Gandy-Hyland functional expressed in terms of the `full' fan functional $\Omega$ and a \aleph functional $\Upsilon$.  
\end{cor}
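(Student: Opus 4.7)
The plan is to apply the term extraction machinery (Corollary~\ref{consresultcor}) to the proof of Theorem~\ref{copycat} (that $\P+\textsf{\textup{GHS}}\vdash\NPC$), exactly in the pattern established by Corollary~\ref{TE}. The first step is to bring both the hypothesis and the conclusion into normal form. For the conclusion, recall from the proof of Corollary~\ref{TE} that \ref{druk2} is equivalent (via resolving $\approx_{1}$ and applying idealisation \textsf{I}) to the normal form $(\forall^{\st}Y^{2}\in C, f^{1})(\exists^{\st}N^{0})A(Y,f,N)$, where $A(Y,f,N)\equiv(\forall g^{1})(\overline{f}N=_{0}\overline{g}N\di Y(f)=_{0}Y(g))$ is internal.

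For the hypothesis, the external \textsf{GHS} principle naturally splits into the internal schema $\GH_{\st}(\Gamma)$ and the $\Omega$-invariance clause. As observed in the first lines of the proof of Theorem~\ref{copycat}, applying underspill turns the latter into the normal form
\[
(\forall^{\st}Y^{2}\in C,\alpha^{1})(\exists^{\st}K^{0})(\forall N,M\geq K,s^{0})\big(\overline{\alpha}M=\overline{s*00\dots}M\di \Gamma(Y,s)=H(Y,s,N)\big),
\]
and $\HAC_{\INT}$ then yields a standard Skolem functional $\Psi$ which is precisely a witness to $\textsf{\textup{GHS}}(\Psi,\Gamma)$. Thus the proof of Theorem~\ref{copycat} can be recast as a proof in $\P$ of
\[
(\forall^{\st}\Gamma,\Psi)\big[\GH_{\st}(\Gamma)\wedge \textsf{\textup{GHS}}_{\st}(\Psi,\Gamma)\di (\forall^{\st}Y\in C, f)(\exists^{\st}N)A(Y,f,N)\big].
\]

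Next, strengthening the antecedent to its purely internal form (i.e.\ dropping the $\st$-superscripts, which yields $\GH(\Gamma)$ and $\textsf{\textup{GHS}}(\Psi,\Gamma)$ as defined in the paragraph preceding Corollary~\ref{TvE}), and pulling all standard quantifiers to the front, one obtains the normal form
\[
(\forall^{\st}\Gamma,\Psi,Y\in C, f)(\exists^{\st}N)\big[\GH(\Gamma)\wedge \textsf{\textup{GHS}}(\Psi,\Gamma)\di A(Y,f,N)\big].
\]
Applying Corollary~\ref{consresultcor} to this normal form yields a closed term $s^{4}$ of $\mathcal{T}^{*}$ such that $\textsf{\textup{E-PA}}^{\omega*}$ proves
\[
(\forall\Gamma,\Psi,Y\in C, f)(\exists N\in s(\Gamma,\Psi,Y,f))\big[\GH(\Gamma)\wedge \textsf{\textup{GHS}}(\Psi,\Gamma)\di A(Y,f,N)\big].
\]
Defining $t(\Gamma,\Psi)(Y,f):=\max_{N\in s(\Gamma,\Psi,Y,f)}N$ (using that $A(Y,f,N)$ is monotone in $N$) gives a term $t$ such that, under the hypotheses $\GH(\Gamma)\wedge\textsf{\textup{GHS}}(\Psi,\Gamma)$, the formula $(\forall Y\in C, f, g)(\overline{f}t(\Gamma,\Psi)(Y,f)=\overline{g}t(\Gamma,\Psi)(Y,f)\di Y(f)=Y(g))$ holds, i.e.\ $\textup{\textsf{MPC}}(t(\Gamma,\Psi))$, as required.

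The main obstacle is bookkeeping rather than mathematical: one must verify that the various $\Omega$-quantifiers introduced during the proof of Theorem~\ref{copycat} (in particular in the chain of equalities running from \eqref{dung} to the final line) can all be turned into standard-existential witnesses via underspill and $\HAC_{\INT}$, so that the antecedent of the implication used in term extraction is an honest internal formula with the dependence on $\Gamma$ and $\Psi$ explicit. Once this is done, Corollary~\ref{consresultcor} applies routinely, precisely as in the proofs of Corollaries~\ref{TE}, \ref{TwE} and \ref{boei}.
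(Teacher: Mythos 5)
Your proposal is correct and follows exactly the route the paper intends: the paper's proof is the one-line remark ``Analogous to the proof of Corollary~\ref{TE}'', and what you have done is carry out that analogy in detail --- bringing both $\textsf{GHS}$ (via underspill and $\HAC_{\INT}$) and $\NPC$ (via idealisation) into normal form, strengthening the antecedent to the internal formulas $\GH(\Gamma)\wedge\GHS(\Psi,\Gamma)$, and invoking Corollary~\ref{consresultcor} followed by the usual ``take the maximum over the finite sequence of witnesses'' step, using that $A(Y,f,N)$ is monotone in $N$. One small remark: the concern you raise in the last paragraph about $\Omega$-quantifiers inside the chain of equalities in Theorem~\ref{copycat} is not actually an obstacle --- term extraction as in Theorem~\ref{consresult} operates on the proof tree of the \emph{final} normal-form statement, so the intermediate reasoning inside Theorem~\ref{copycat} is handled automatically by the algorithm and need not itself be put in normal form.
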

\begin{proof}
Analogous to the proof of Corollary \ref{TE}, i.e.\ follow the template from Remark~\ref{firliborn}.  
The normal form of $\NPC$, obtained in the same way as the normal form \eqref{kunt3} of \ref{kunt} in the proof of Theorem \ref{foor}, is     
\be\label{yuppiee}
(\forall^{\st} Y^{2}\in C, f^{1})(\exists^{\st}N^{0}_{0})\big[(\forall g^{1})(\overline{f}N=_{0}\overline{g}N\di Y(f)=_{0}Y(g))\big],
\ee
and applying $\HAC_{\INT}$, one sees how the modulus-of-continuity functional comes about.    
In step \eqref{cruz} from the template in Remark~\ref{firliborn}, idealisation \textsf{I} needs to be applied to pull the `$(\exists^{\st}N_{0})$' quantifier from \eqref{yuppiee} through the quantifier $(\forall \Gamma^{3})$ in $\GHS_{\ns}\di\NPC$, taking into account Remark \ref{simply}.  
%Note that \textsf{PWC}$_{\ns}$ already is in normal form.  
\end{proof}
%An analogous result for \aleph as in \textsf{PWC} is now easily obtained.  
%Following Definition \ref{HERWEG}, it is easy to define the Herbrandisation of $\textsf{\textup{GHS}}_{\ns}\di \ref{druk2}$,
%and obtain a result as in Theorem \ref{TEH}.  %Note that $\textup{\textsf{UNC}}_{\ns}\di \STP$ by Theorem~\ref{foor}.  
%It is an interesting exercise to show that we may weaken $\GHS_{\ns}$ in Theorem~\ref{copycat} by dropping the condition that $\Gamma$ be standard.  
%With this modification, the associated term in \eqref{tinkel} would no longer depend on $\Gamma$.  
%
%\medskip
%
Next, we sketch a `pointwise' version of Theorem \ref{copycat} and Corollary \ref{TvE}, similar to the results in Section \ref{laiba}.
Define the `pointwise' version of $\GHS_{\ns}$ as follows:
\begin{princ}[$\textsf{GHS}_{\ns2}(Y, \Gamma)$]\label{GHS2}
We have  $\GH(\Gamma, Y)$ and
\[
(\forall^{\st}\alpha^{1})(\forall N^{0}, M^{0}\in \Omega)(\forall s^{0})(\overline{\alpha}M=_{0}\overline{s*00\dots}M\di \Gamma(Y, s)=_{0}G(Y, s, N) ).
\]
\end{princ}
Recall the definition of $\NPC(Y)$ from Section \ref{laiba}.  
\begin{theorem}\label{copycat1339}
In $\P$, we have $ (\forall^{\st}Y^{2}\in C)(\forall \Gamma^{3})[\textup{\textsf{GHS}}_{\ns2}(Y, \Gamma)\di \ref{druk2}(Y)]$.  % the following equivalence:
%\be\label{topcheryy}
%
%\ee
\end{theorem}
\begin{proof}
In a nutshell, the proof of Theorem \ref{copycat} goes through with minor modifications.  In more detail, fix standard $Y^{2}\in C$ and any $\Gamma^{3}$ such that $\GHS_{\ns2}(Y, \Gamma)$.  
Applying underspill to the second conjunct of the latter, we obtain a variation of \eqref{drink}, namely the following formula:
\[
(\forall^{\st}\alpha^{1})(\exists^{\st}K^{0})(\forall N^{0}, M^{0}\geq K)(\forall s^{0})(\overline{\alpha}M=_{0}\overline{s*00\dots}M\di \Gamma(Y, s)=_{0}G(Y, s, N) ).
\]
Applying $\HAC_{\INT}$, there is $\xi^{2}$ witnessing the existential quantifier, bearing in mind Remark \ref{simply}. 
Now note that in the series of equalities involving \eqref{dung2}, all equalities resulting from \eqref{drink} only involve $\Gamma(Y, \cdot)$ and $G(Y, \cdot)$.  
Hence, the proof of Theorem~\ref{copycat} goes through in this case, but with $\xi(\alpha)$ instead of $\Xi(Y, \alpha)$.  % and we are done. 
%\eand for no other functional.  
\end{proof}
Now define \textsf{GHS}$(Y, \Psi, \Gamma)$ as $\GHS(\Psi, \Gamma)$ without the quantifier `$(\forall Y^{2}\in C)$'. 
\begin{cor}[Term Extraction IV]\label{TvEE}
From the proof in Theorem \ref{copycat1339}, a term $t$ can be extracted such that \textsf{\textup{E-PA}}$^{\omega*}+\QFAC^{1,0}$ proves for all $ \Gamma^{3}, \Psi^{2}, Y^{2}\in C$ that
\begin{align}\label{tinkelE}
\big[\GH(\Gamma, Y)\wedge \textsf{\textup{GHS}}(Y,\Psi, \Gamma)  \big] \di \textup{\textsf{PCM}}(Y, t(Y, \Psi)). 
\end{align}
%i.e.\ $G(Y, s, t(Y, s,\Omega, \Upsilon ))$ is the Gandy-Hyland functional expressed in terms of the `full' fan functional $\Omega$ and a \aleph functional $\Upsilon$.  
\end{cor}
\begin{proof}
Analogous to the proof of Corollary \ref{TE}.  %Note that \textsf{PWC}$_{\ns}$ already is in normal form.  
\end{proof}
Note that \eqref{tinkelE} expresses that if for $Y^{2}\in C$ we can approximate the Gandy-Hyland functional at $Y$ `uniformly' via $G$ and $\Psi$, then $t(Y, \Psi)$ is a modulus of pointwise continuity for $Y$.  A kind of converse was obtained in Corollary~\ref{boei}.    

%%\subsubsection{Discussion}
%In this section, we discuss the results in Section \ref{kaiba} from a more general point of view.  
%
%\medskip

\medskip

In light of \eqref{consversion} and\footnote{Following the proof of Corollary \ref{forlk}, Feferman's search operator may be defined in terms of a modulus-of-continuity functional, i.e.\ there is 
a version of \eqref{tinkel} with consequent $\MU(u(\Gamma, \Psi))$, where the term $u$ would however contain choice functionals from $\QFAC^{1,0}$.} \eqref{tinkel}, there are terms from G\"odel's $\textsf{T}$ expressing the (approximations of the) Gandy-Hyland functional in terms of Feferman's search operator, and vice versa. 
As it turns out, there is also a recent \emph{model-theoretic} characterisation of the Gandy-Hyland functional and arithmetical comprehension, 
namely \cite{longmann}*{Theorem~9.5.4, p.\ 460}, which expresses that: 
\begin{center}
\emph{The totality of the Gandy-Hyland functional in a \(computationally closed\) model is equivalent to that model satisfying arithmetical comprehension. \hfill \textsf{\textup{(LN)}}}
\end{center}
Consequently, it is a natural question (due to Dag Normann) whether our results are related to the aforementioned model-theoretic result.  % (involving partial functionals).  
While our above results regarding the $\Gamma$-functional and arithmetical comprehension (as in Feferman's search operator) bear \emph{some} resemblance to \textsf{(LN)}, they are not really satisfactory.  
On the other hand, the latter deals with partial functionals, and how \emph{would} one express partiality in a system like $\P$ \emph{where all functionals are total anyway}?         
We discuss these matters in the next section where we also improve upon the previous results.  % of this section.  

\subsection{From Nonstandard Analysis to relative computability IV}\label{quatre}
\subsubsection{Introduction}
In this section, we study the equivalences between a version of $\NPC$, principles involving the $\Gamma$-functional, and $\paai$.  
From these equivalences, we obtain relative computability results for arithmetical comprehension, the $\Gamma$-funtional, and a modulus-of-continuity functional, some quite similar to \textsf{{(LN)}}.       
%As it turns out, $\paai$ is the `precursor' of $(\mu^{2})$ in the same way $\NPC$ is for $(\exists \Psi)\MPC(\Psi)$.  
To this end, we shall work with the \emph{associates} of continuous functionals, rather than the functionals themselves.  

\medskip
 
We introduce the notion of associate in Section \ref{firlbi}, and prove an equivalence between $\NPC$ for associates and $\paai$.  From this equivalence, we obtain an effective equivalence between arithmetical comprehension as in $(\mu^{2})$, and a modulus-of-continuity functional for associates.  
In section~\ref{GHASS}, we prove an equivalence between $\paai$ and various statements regarding the Gandy-Hyland functional defined on associates.
From these nonstandard equivalences, we obtain various relative computability results regarding the Gandy-Hyland functional and arithmetical comprehension as in $(\mu^{2})$.  
As we will see, our final result is rather close in spirit to \textsf{(LN)}.  % as in \cite{longmann}*{Theorem~9.5.4, p.\ 460}.

\subsubsection{Continuity and associates}\label{firlbi}
In this section, we introduce the notion of associate and prove a first equivalence involving $\paai$ and $\NPC$ \emph{for associates}.  From this equivalence, we obtain an effective equivalence between arithmetical comprehension as in $(\mu^{2})$, and a modulus-of-continuity functional \emph{for associates}.  

\medskip

We introduce the definition of associate from \cite{kohlenbach4}*{Def.\ 4.3}; See also \cite{longmann}*{\S8.2.1}.  % and then motivate why working with associates changes very little for us.  
\begin{definition}[Associate]\label{defke}
The function $\gamma^{1}$ is an \emph{associate} of $Y^{2}\in C$ if:
\begin{enumerate}[(i)]
\item $(\forall \beta^{1})(\exists k^{0})\gamma(\overline{\beta} k)>0$,\label{defitem2}
\item $(\forall \beta^{1}, k^{0})(\gamma(\overline{\beta} k)>0 \di Y(\beta)+1=_{0}\gamma(\overline{\beta} k))$.\label{defitem}
\end{enumerate}
We assume an associate $\gamma^{1}$ to be a \emph{neighbourhood function} (See \cite{kohlenbach4}*{\S4}), i.e.\ 
\be\label{good}
(\forall \sigma^{0}, \tau^{0}, n^{0})\big((\sigma \preceq\tau \wedge \gamma(\overline{\sigma}n)>0 ) \di \gamma(\sigma)=_{0}\gamma(\tau)\big),   
\ee
where `$\sigma\preceq \tau$' is `$|\sigma|\leq |\tau|\wedge (\forall i<|\sigma|)(\sigma(i)=\tau(i))$', i.e.\ $\sigma$ is an initial segment of $\tau$.  
\end{definition}
\noindent
We now argue why working with associates, rather than continuous functionals, is natural \emph{in our context}.    
Recall the following fragment of the axiom of choice.        
\begin{definition}[$\QFAC^{1,0}$] For internal and quantifier-free $\varphi_{0}$, we have
\be
(\forall x^{1})(\exists y^{0})\varphi_{0}(x, y)\di (\exists F^{2})(\forall x^{1})\varphi_{0}(x, F(x)).
\ee
\end{definition}
Applying $\QFAC^{1,0}$ to item \eqref{defitem2} in Definition \ref{defke}, the latter gives rise to a continuous functional $Y^{2}$ by putting $Y(\alpha):=\gamma(\overline{\alpha}F(\alpha))-1$.  Hence, associates give rise to continuous type two functionals, modulo $\QFAC^{1,0}$.  
Furthermore, the latter is a very weak principle, as established in \cite{kohlenbach2}*{\S2}.   

\medskip

Secondly, as noted above, the proof of \cite{kohlenbach4}*{Prop.\ 4.4} contains an explicit definition for obtaining an associate from a functional and its modulus of pointwise continuity.  Hence, in the presence of a modulus-of-continuity functional (as in Section \ref{deballen}) or if a modulus is assumed to be given (as in Section \ref{laiba}), working with associates rather than the continuous functionals themselves, amounts to the same.  

\medskip

Thirdly, the logical framework for \emph{Reverse Mathematics} (\cite{simpson2}) is second-order arithmetic, and one is thence forced to work with associates (called \emph{codes} by Simpson in \cite{simpson2}) to study e.g.\ continuous functionals on Baire or Cantor space, or $\R$ (See also \cite{kohlenbach4}*{Prop.\ 4.4}).  The development of Reverse Mathematics does not 
seem to be hampered by the use of associates       

\medskip

Finally, we note that associates play an important role in higher-order computability theory (See e.g.\ \cite{longmann}*{\S8.2.1}), i.e.\ they are of independent interest besides the above pragmatic motivations.  

\medskip

In light of the previous observations, it seems acceptable to work with associates directly, \emph{in the context of this paper}.  
Thus, we may introduce the following.  
\begin{nota}\label{laaaaate}\rm
We denote `$\gamma^{1}\in C$' the first item of Definition \ref{defke} plus the requirement on neighbourhood functions \eqref{good}.  
Then $(\forall^{\st}\gamma^{1}\in C)\varphi(\gamma)$ is short for 
\be\label{asin}
(\forall \gamma^{1})\big(\st_{1}(\gamma)\di\big[ \gamma^{1}\in C\di \varphi(\gamma)  \big]\big).  
\ee
Note that \emph{no} mention of $(\gamma^{1}\in C)^{\st}$ is made, and that the formula in square brackets in \eqref{asin} is internal if $\varphi(\gamma^{1})$ is.
Furthermore, we denote the `value of the associate $\gamma^{1}\in C$ at $\alpha^{1}$' by `$\gamma(\alpha)$', which is understood to be $\gamma(\overline{\alpha}N)-1$, assuming the latter is at least zero, i.e.\ for large enough $N$.  An equality `$\gamma(\alpha)=_{0}m^{0}_{0}$' is then interpreted as $(\forall n^{0})\big( \gamma(\overline{\alpha}n)>0\di \gamma(\overline{\alpha}n)=_{0}m_{0}+1)   \big)$, which is not quantifier-free.  
%In the presence of $\QFAC^{1,0}$, $\gamma(\alpha)$ may be defined as $\gamma(\overline{\alpha}F(\alpha))-1$, where $F$ originates from the former choice axiom.  
\end{nota}
The previous notations are in line with those used in Reverse Mathematics, as can be gleaned from \cite{simpson2}*{II.6.1}.  
With these conventions in place, we can introduce a nonstandard continuity principle on associates, as follows.  % (resp.\ $\P+\QFAC^{1,0}$).
\be\tag{$\NPC^{\mathbb{a}}$}
(\forall^{\st}\gamma^{1}\in C, \alpha^{1})(\forall \beta^{1})(\alpha\approx_{1}\beta\di \gamma(\alpha)=_{0}\gamma(\beta)),
\ee
where the final equality is not quantifier-free by Notation \ref{laaaaate}.  
%We have the following nonstandard equivalence.  
\begin{theorem}\label{copycatje}
The system $\P$ proves $\NPC^{\mathbb{a}}\asa \paai$.   
\end{theorem}
\begin{proof}
For the implication $\paai\di \NPC^{\a}$, fix standard $\gamma^{1}\in C$, standard $\alpha^{1}$, and any $\beta^{1}\approx_{1}\alpha^{1}$.  Now consider $(\exists k^{0})\gamma(\overline{\alpha} k)>0$, implying $(\exists^{\st} k_{0}^{0})\gamma(\overline{\alpha} k_{0})>0$ by $\paai$.  
Since $\alpha\approx_{1}\beta$, we have $\overline{\beta} k_{0}=_{0^{*}}\overline{\alpha} k_{0}$ for the latter $k_{0}$.  
By extensionality, we have $\gamma(\overline{\alpha} k_{0})=_{0}\gamma(\overline{\beta} k_{0})>0$; This implies $\gamma(\alpha)=\gamma(\beta)$, and $\NPC^{\a}$ follows.     

\medskip

Working in $\P+\NPC^{\a}$, suppose $\paai$ is false, i.e.\ there is standard $h^{1}$ such that $(\forall^{\st}n)h(n)=0$ and $(\exists m_{0})h(m_{0})\ne 0$.  Define standard $\gamma^{1}$ as follows:
\be\label{qast}
\gamma(\sigma):=
\begin{cases}
1+ \sigma\big((\mu n\leq |\sigma|)[h(n)\ne 0]\big) & (\exists n\leq |\sigma|)[h(n)\ne 0] \\
0 & \textup{otherwise}
\end{cases}.
\ee
Clearly, $\gamma^{1}\in C$ and $\NPC^{\a}$ implies that the latter is nonstandard continuous.  However, for $\beta_{M}:=(\overline{00\dots}M)*(MM\dots)$, we note that $\beta_{0}$ is standard, and that $\beta_{0}\approx_{1} \beta_{m_{0}}$ and $\gamma(\beta_{0})=0\ne m_{0}= \gamma(\beta_{m_{0}})$ if $h(m_{0})\ne 0$.  This contradiction yields $\NPC^{\a}\di \paai$, and we are done.  
\end{proof}
%
%\medskip
%
%The previous proof also yields a `\aleph' result for associates.
%\begin{cor}
%The system $\P$ proves the equivalence between $\paai$ and
%\be\tag{$\textsf{\textup{WPC}}^{\mathbb{a}}$}
%(\forall^{\st}\gamma^{1}\in C, \alpha^{1})(\forall \beta^{1})(\alpha\approx_{1}\beta\di \st_{0}(\gamma(\beta))),
%\ee
%\end{cor}
Recall $\MU(\mu)$ introduced in Section \ref{seqtor} and let $\MPC^{\a}(\Psi^{2})$ be 
\[
(\forall \gamma^{1} \in C)(\forall f^{1}, g^{1}\leq_{1}1)\big[\overline{g}\Psi(\gamma, f)=_{0} \overline{f}\Psi(\gamma, f)\di \gamma(f)=_{0}\gamma(g)\big].  
\]
The formula in square brackets in $\MPC^{\a}(\Psi)$ is not quantifier-free due to `$\gamma(f)=_{0}\gamma(g)$'.  We have the following relative computability result.  
\begin{cor}[Term Extraction V]\label{thelastsheeversawofhim}
From the proof of $ \NPC^{\a}\asa\paai$ in $\P$, terms $s,t$ can be extracted such that $\textsf{\textup{E-PA}}^{\omega*}+\QFAC^{1,0}$ proves
\be\label{frood1440}
(\forall \mu^{2})\big[\textsf{\MU}(\mu)\di \MPC^{\a}(s(\mu)) \big] \wedge (\forall \Psi^{2})\big[ \MPC^{\a}(\Psi)\di  \MU(u(\Psi))  \big].
\ee
\end{cor}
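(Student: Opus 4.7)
The plan is to apply the methodology of Corollaries~\ref{TE},~\ref{TwE}, and~\ref{boei}: convert $\paai$ and $\NPC^{\a}$ into normal forms, strengthen antecedents to internal form, and invoke Corollary~\ref{consresultcor} on the two directions of the equivalence established in Theorem~\ref{copycatje}.

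First I would compute the normal forms. Contraposing $\paai$ and using classical logic together with idealisation~\textsf{I} gives
\[
(\forall^{\st} f^{1})(\exists^{\st} N^{0})\big[(\exists m^{0})(f(m) \neq 0) \di (\exists n \leq N)(f(n) \neq 0)\big],
\]
whose $S_{\st}$-interpretation is witnessed by any functional satisfying $\MU(\mu)$ modulo a routine $\T$-definable recoding between \emph{find a zero} and \emph{find a nonzero}. For $\NPC^{\a}$, resolving the definition of $\approx_{1}$ and applying~\textsf{I} yields the normal form
\[
(\forall^{\st} \gamma^{1}\in C, \alpha^{1})(\exists^{\st} N^{0})\big[(\forall \beta^{1})(\overline{\alpha}N =_{0} \overline{\beta}N \di \gamma(\alpha) =_{0} \gamma(\beta))\big],
\]
which, after $\HAC_{\INT}$, is witnessed by a standard functional $\Psi$ precisely when $\MPC^{\a}(\Psi)$ holds.

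For the direction $\paai \di \NPC^{\a}$, the proof of Theorem~\ref{copycatje} obtains the standard modulus $k_{0}$ by applying $\paai$ to the $(0/1)$-valued sequence $k \mapsto [\gamma(\overline{\alpha}k) > 0]$; hence I would set $s(\mu)(\gamma,\alpha) := \mu(\lambda k.\, [\gamma(\overline{\alpha}k) = 0])$, which is a closed term of $\T$ in $\mu$. Strengthening the antecedent of the normal-form implication to the internal statement $(\forall f) A(f,\mu(f))$ and applying Corollary~\ref{consresultcor} then yields the first conjunct of \eqref{frood1440}. For the reverse $\NPC^{\a} \di \paai$, the proof constructs $\gamma_{f} \in C$ via \eqref{qast} from $f^{1}$ and derives a contradiction from nonstandard continuity of $\gamma_{f}$ at $\beta_{0} = 00\dots$. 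Concretely, if $\Psi(\gamma_{f},\beta_{0})$ is a pointwise modulus of continuity, then any $m$ with $f(m) \neq 0$ must satisfy $m \leq \Psi(\gamma_{f},\beta_{0})$; otherwise the sequence $\beta_{m}$ from that proof would falsify continuity at $\beta_{0}$. Hence $u(\Psi)(f)$ can be taken to be the output of the bounded search $(\mu n \leq \Psi(\gamma_{f},\beta_{0}))[f(n) \neq 0]$, which is a closed $\T$-term; after the same trivial recoding, applying Corollary~\ref{consresultcor} delivers the second conjunct of \eqref{frood1440}.

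The main obstacle is the technical verification of the normal-form manipulations for $\NPC^{\a}$. As remarked just before the statement of the corollary, the innermost formula in $\MPC^{\a}(\Psi)$ is not literally quantifier-free, because the equality $\gamma(f) =_{0} \gamma(g)$ unfolds into an internal implication involving $\gamma(\overline{f}n)$ together with the neighbourhood-function condition \eqref{good}. One needs to check that this unfolding still yields a formula to which Corollary~\ref{consresultcor} applies; this is routine, but care is required to avoid importing $\QFAC^{1,0}$ into the final $\textsf{E-PA}^{\omega*}$ statement, since that choice principle figures only in the discussion of associates and is not part of $\textsf{E-PA}^{\omega*}$.
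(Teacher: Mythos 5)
Your proposal follows essentially the same route as the paper: compute normal forms for $\paai$ and $\NPC^{\a}$, strengthen to internal antecedents, and invoke Corollary~\ref{consresultcor} on the two directions of Theorem~\ref{copycatje}; the paper's proof is a three-line sketch doing exactly this. Your explicit candidate terms $s(\mu)(\gamma,\alpha) := \mu(\lambda k.\,[\gamma(\overline{\alpha}k)=0])$ and the bounded search for $u(\Psi)$ are a useful supplement the paper omits, and your remark on the non-quantifier-free-ness of $\gamma(f)=_{0}\gamma(g)$ matches the paper's own caveat immediately preceding the corollary.
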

\begin{proof}
To obtain a normal form for $\NPC^{\a}$, proceed in the same way as for $\NUC$ and \eqref{kunt3}.  In particular the normal form of $\NPC^{\a}$ is
\be\label{NFA}
(\forall^{\st}\gamma^{1}\in C, f^{1})(\exists^{\st}N^{0})(\forall g^{1})\big[\overline{f}N=_{0}\overline{g}N\di \gamma(f)=_{0}\gamma(g)\big],
\ee
where the formula in square brackets is \emph{internal} and \emph{not quantifier-free} by Notation~\ref{laaaaate}.  
The normal form for $\paai$ is obvious, namely:
\be\label{nfpaai}
(\forall^{\st}f^{1})(\exists^{\st}n^{0})\big[(\exists m^{0})f(m)\ne0 \di (\exists i^{0}\leq_{0} n)f(i)\ne0  \big].
\ee
Now apply the template in Remark \ref{firliborn} to $\NPC^{\a}\asa \paai$.  % in normal form as in the template in .    
\end{proof}
Finally, we discuss the conceptual meaning of $\NPC^{\a}$.  By Notation \ref{laaaaate}, a \emph{type one} associate $\gamma^{1}\in C$ is meant to `simulate' or `represent' a continuous \emph{type two} functional.  By Definition \ref{debs}, \emph{standard} functionals have standard output for standard input in $\P$;  Thus, a natural question is whether a \emph{standard} associate $\gamma^{1}\in C$ has standard output $\gamma(\alpha)$ for standard input $\alpha^{1}$, i.e.\ whether a \emph{standard} associate also simulates a \emph{standard} type two functional.  By the proof of Theorem \ref{copycatje}, one requires $\paai$ or $\NPC^{\a}$ to guarantee the `expected' behaviour that standard associates have standard output for standard input.  Indeed, $\gamma^{1}\in C$ as in \eqref{qast} yields \emph{nonstandard} output for certain \emph{standard} inputs, assuming $\neg\paai$.  

\subsubsection{The Gandy-Hyland functional and associates}\label{GHASS}
In this section, we shall study the connection between arithmetical comprehension and the Gandy-Hyland functional \emph{defined on associates}.  
Now, in the previous section, we observed that \emph{type one associates} may be viewed as \emph{type two functionals};  In particular, for $\gamma^{1}\in C$ and $\alpha^{1}$, it makes sense to apply the former to the latter as in `$\gamma(\alpha)$', despite the type mismatch.  %the application of
Similarly, we now define how one applies \emph{type three} functionals (like the Gandy-Hyland functional) to type one associates, again despite the apparent type mismatch.  % between input and functional.  % which is essential since the Gandy-Hyland functional takes type \emph{two} functionals as input.    
\begin{nota}\label{riffer}\rm
In the presence of $\QFAC^{1,0}$, $\gamma(\alpha)$ equals $\gamma(\overline{\alpha}F(\alpha))-1$ for $\gamma^{1}\in C$, where $F$ originates from the former choice axiom applied to $(\forall \alpha^{1})(\exists N^{0})\gamma(\overline{\alpha}N)>0$.  In this way, we define for $\Lambda^{3}$ and $\gamma^{1}\in C$, the application of the former functional to the latter sequence as 
$\Lambda(\gamma):= \Lambda\big((\lambda \alpha^{1})[\gamma(\overline{\alpha}F(\alpha))-1] \big)$, where $F$ is the aforementioned choice functional from $\QFAC^{1,0}$.  Similarly, for a formula $A(Y^{2})$, we shall use the formula $(\forall \gamma^{1}\in C)A(\gamma)$ as shorthand for the following formula:
\be\label{kkkunt}
(\forall \gamma^{1}) (\forall F^{2})\Big[ (\forall \alpha^{1})\gamma(\overline{\alpha}F(\alpha))>0\di   A\big((\lambda \alpha^{1})[\gamma(\overline{\alpha}F(\alpha))-1]\big)\Big], 
\ee
where no type mismatch occurs.  
Similar to the convention involving \eqref{asin}, the formula `$(\forall^{\st} \gamma^{1}\in C)A(\gamma)$' is the formula \eqref{kkkunt} 
with `$(\forall^{\st}\gamma^{1})$' instead of `$(\forall \gamma^{1})$'.%$(\forall \gamma^{1}\in C)[\st_{1}(\gamma)\di A(\gamma)]$.  
%Finally, we will usually work with the abbrevia $A(\gamma)$ and $\Lambda(\gamma)$ 
\end{nota}
As will become clear, mathematics practice does not change much when working with associates;  This has been previously observed in the development of Reverse Mathematics (See e.g.\ \cite{simpson2}*{I.4, p.\ 15}).  

\medskip

First of all, we study the following principle regarding the Gandy-Hyland functional and associates:  
\begin{align}\tag{$\GH_{\ns}^{\mathbb{a}}$}
(\exists^{\st}\Gamma^{3})\big[(\forall^{\st}\gamma^{1}\in C)(\forall s^{0})&\GH(\Gamma, \gamma, s)\\
&\wedge (\forall^{\st}\gamma^{1}\in C,s^{0})(\forall N\in \Omega)( \hat{G}(\gamma,s,N)=\Gamma(\gamma,s)) \big],\notag
\end{align}
where $\hat{G}$ is $G$ with $Y(s*11\dots)$ rather than $Y(s*00\dots)$ in the first case of \eqref{small}.    

\medskip

In light of Theorem \ref{drag} and Theorem \ref{copycatje}, it is easy to obtain a proof of $\paai \di \GH_{\ns}^{\a}$ in $\P+\STP$.
%One can even show that $\GH_{\ns}^{\a}$ makes senses in $\P+\paai$, i.e.\ $\QFAC^{1,0}$ is not needed.      
The more interesting reversal is now as follows.
\begin{theorem}\label{mukh}
The system $\P$ proves $\GH_{\ns}^{\mathbb{a}}\di \paai$.
%The system $\P+\QFAC^{1,0}+\STP$ proves $ \paai \di \GH_{\ns}^{\mathbb{a}}$.
\end{theorem}
\begin{proof}
Working in $\P+\GH_{\ns}^{\a}$, we show that every standard $\gamma^{1}\in C$ is nonstandard continuous, 
implying $\NPC^{\a}$ and thus $ \paai$ by Theorem \ref{copycatje}.  
If we have $(\forall^{\st}\alpha^{1})(\exists^{\st}N)\gamma(\overline{\alpha}N)>0$ for $\gamma^{1}\in C$, then the latter is nonstandard continuous.  
We now derive a contradiction from $(\exists^{\st}\alpha_{0}^{1})(\forall^{\st}N)\gamma_{0}(\overline{\alpha}N)=0$ for some standard $\gamma_{0}^{1}\in C$.  
Fix such $\alpha_{0},\gamma_{0}$ and define the standard function $\gamma^{1}\in C$ as follows:
\be\label{qast2}
\gamma(\sigma):=
\begin{cases}
1+ \sigma((\mu n\leq |\sigma|)\gamma_{0}(\overline{\alpha_{0}}n)\ne 0) & (\exists n\leq |\sigma|)\gamma_{0}(\overline{\alpha_{0}})\ne 0 \\
0 & \textup{otherwise}
\end{cases}.
\ee   
Note that $\gamma(\alpha)=\alpha(m_{0})$ where $m_{0}$ is the least $m$ such that $\gamma_{0}(\overline{\alpha}m)\ne 0$.
Bearing in mind Notation \ref{riffer}, we compute $\Gamma(\gamma, \langle \rangle)$ and $\hat{G}(\gamma, \langle \rangle, m_{0})$ and observe that they are different.  This contradiction yields $\NPC^{\a}$, and $\paai$ by Theorem \ref{copycatje}.     
%Working in $\P+\GH_{\ns}^{\a}$, suppose $\paai$ is false, i.e.\ there is standard $h^{1}$ such that $(\forall^{\st}n)h(n)=0$ and $(\exists m)h(m)\ne 0$.  
%Define standard $\gamma^{1}\in C$ as in \eqref{qast} and note that $\gamma(\alpha)=\alpha(m_{0})$ where $m_{0}$ is the least $m$ such that $h(m)\ne 0$.  
To prove that $\Gamma(\gamma, \langle \rangle)\ne\hat{G}(\gamma, \langle \rangle, m_{0})$, define $\beta_{0}^{1}$ as $\big(0*(\lambda n)\Gamma(\gamma, n+1)\big)$;  By the definition of $\gamma$:
\[
\Gamma(\gamma, \langle\rangle)=\gamma(0*(\lambda n)\Gamma(\gamma, n+1))=\gamma(\beta_{0})=  \beta_{0}(m_{0})=\Gamma(\gamma, m_{0}). 
\]
Similarly, define $\beta_{1}^{1}$ as $\big(m_{0}*0*(\lambda n)\Gamma(\gamma, m_{0}*(n+1))\big)$ and note that:
\begin{align*}
\Gamma(\gamma, m_{0})=\gamma(m_{0}*0*(\lambda n)\Gamma(\gamma, m_{0}*(n+1)))=\gamma(\beta_{1})=\beta_{1}(m_{0})=\Gamma(\gamma, m_{0}*(m_{0}-1)).
\end{align*}
%By the same procedure, we obtain $\Gamma(\gamma, m_{0}*(m_{0}-1))=\Gamma(\gamma, m_{0}*(m_{0}-1)*(m_{0}-2))$, and 
Hence, $\Gamma(\gamma, \langle\rangle)=\Gamma(\gamma, m_{0}*(m_{0}-1))$, and in the same way one obtains that
\[
\Gamma(\gamma, \langle\rangle)=\Gamma(\gamma, m_{0}*(m_{0}-1)*\dots* 1)=\gamma\big(m_{0}*(m_{0}-1)*\dots* 1*0* (\lambda n)(\dots)\big)=0.
\]
Secondly, in exactly the same way, we have for $\sigma_{0}^{0}:=m_{0}*(m_{0}-1)*\dots* 1$ that
\begin{align*}
\hat{G}(\gamma, \langle\rangle, m_{0})=\hat{G}(\gamma, \langle m_{0}\rangle, m_{0})
&=\hat{G}(\gamma, \langle m_{0},m_{0}-1\rangle, m_{0})\\
&=\hat{G}(\gamma, \langle m_{0},m_{0}-1, m_{0}-2\rangle, m_{0})\\
&=\dots\\
&=\hat{G}(\gamma, \langle m_{0},m_{0}-1, m_{0}-2,\dots,1\rangle, m_{0})\\
&=\hat{G}(\gamma, \sigma_{0}, m_{0})\\
&=\gamma\big(\sigma_{0}*11\dots\big)=(\sigma_{0}*11\dots)(m_{0})=1,
%\gamma(0*\hat{H}(\gamma, 1, m_{0}-2)*\dots * \hat{H}(\gamma, m_{0}-1, m_{0}-2)*11\dots)=1,  
\end{align*}
where $m_{0}$ is again the least $m$ such that $h(m)\ne 0$. 
\end{proof}
The previous nonstandard theorem gives rise to the following term extraction corollary, for which we need:  % the following principles:
\begin{align}\tag{$\GH^{\a}(\Gamma)$}
&(\forall \gamma^{1}\in C, s^{0})\big[\Gamma(\gamma,s^{0})= \gamma\big(s*0* (\lambda n^{0})\Gamma(\gamma, s*(n+1))\big)\big],\\
\tag{$\GHS^{\a}(\Psi, \Gamma)$}
&(\forall \gamma^{1} \in C, s)(\forall N\geq \Psi(\gamma, s))\big(\hat{G}(\gamma, s, N)=_{0}\Gamma(\gamma, s) \big).
\end{align}
\vspace{-0.75cm}
\begin{cor}[Term extraction VI]\label{figger}
From the proof of $ \GH_{\ns}^{\a}\di\paai$ in $\P$, a term $t$ can be extracted such that $\textsf{\textup{E-PA}}^{\omega*}+\QFAC^{1,0}$ proves 
\be\label{frood1447}
(\forall \Psi^{2}, \Gamma^{3})\big[ [\GH^{\a}(\Gamma)\wedge \GHS^{\a}(\Psi, \Gamma)]\di  \MU(t(\Psi, \Gamma))  \big].
\ee
\end{cor}
\begin{proof}
Analgous to the proof of Corollary \ref{thelastsheeversawofhim};  We provide a more detailed sketch to show that Notation \ref{riffer} does not interfere with term extraction as in Remark~\ref{firliborn}.  To this end, note that the second conjunct of $\GH_{\ns}^{\a}$ is 
\[
(\forall^{\st}\gamma^{1})(\forall F^{2})\Big[(\forall \alpha^{1})(\gamma(\overline{\alpha}F(\alpha))>0) \di   (\forall^{\st} s^{0})(\forall N\in \Omega)( \hat{G}(\gamma_{F},s,N)=\Gamma(\gamma_{F},s))\Big],
\]
where $\gamma_{F}^{2}$ is $(\lambda \alpha^{1})[\gamma(\overline{\alpha}F(\alpha))-1]$.  Clearly, the quantifiers `$(\forall^{\st} s^{0})(\forall N\in \Omega)$' can be pushed outside to obtain a formula of the form $(\forall^{\st}\gamma^{1}, s^{0})(\forall N\in \Omega)A(\gamma, s, N, \Gamma)$, where $A$ is internal.  Applying underspill, we obtain 
\be\label{korfen}
(\forall^{\st}\gamma^{1}, s^{0})(\exists^{\st}n^{0})(\forall N\geq n)A(\gamma, s, N, \Gamma),
\ee
which is the normal form of the second conjunct of $\GH_{\ns}^{\a}$.  The normal form of the first conjunct of the latter is now obtained similarly.  
The normal form of $\paai$ is \eqref{nfpaai}, and applying the template in Remark~\ref{firliborn} yields \eqref{frood1447}.  
In particular, in the course of applying step \eqref{ted} of the template, \eqref{korfen} is transformed into
\[
(\exists^{\st}\Psi^{2})(\forall \gamma^{1}, s^{0})(\forall N\geq \Psi(\gamma, s))A(\gamma, s, N, \Gamma),
\]
and writing out $A$ in full again, we obtain:
\[
(\exists^{\st}\Psi^{2})(\forall \gamma^{1}, s^{0})(\forall N\geq \Psi(\gamma, s))(\forall F^{2})\big[(\forall \alpha^{1})(\gamma(\overline{\alpha}F(\alpha))>0) \di   \hat{G}(\gamma_{F},s,N)=\Gamma(\gamma_{F},s)\big].
\]
Rearranging the universal quantifiers, the previous formula is $(\exists^{\st}\Psi^{2})\GHS^{\a}(\Psi, \Gamma)$ by Notation \ref{riffer}, which is exactly as required for obtaining \eqref{frood1447}. 
\end{proof}
%Note that the choice principle $\QFAC^{1,0}$ is internal, implying that the term in \eqref{frood1447} is still a term from G\"odel's $\textsf{T}$, i.e.\ not involving choice functionals from $\QFAC^{1,0}$, by \cite{brie}*{Theorem 7.7}.  Furthermore, 
By studying the proof of Theorem~\ref{mukh} in more detail, one observes that `$\Gamma$ is standard' in $\GH_{\ns}^{\a}$ is superfluous.  
Repeating the proof of Corollary~\ref{figger} with this modification, one obtains \eqref{frood1447} where the term only depends on $\Psi$.  

\medskip

Next, we study a variation of $\GH_{\ns}^{\a}$ based on \cite{samzoo}.  In the latter, a number of effective equivalences between arithmetical comprehension and uniform theorems from the \emph{Reverse Mathematics zoo} (\cite{damirzoo}) are extracted from \emph{nonstandard} equivalences involving $\paai$ and \emph{standard extensionality}.  % (See~\cite{samzoo}).   
Thus, we are led to the following:
%Analogously, we now study the following variation of $\GH^{\a}_{\ns}$:
\begin{align}\tag{$\GH_{\ns2}^{\mathbb{a}}$}
(\exists^{\st}\Gamma^{3})\big[(\forall^{\st}\gamma^{1}\in C)(\forall s^{0})\GH(\Gamma, \gamma, s)\wedge (\forall^{\st}\gamma^{1},\eps^{1} \in C,s^{0})(\gamma\approx_{1}\eps\di \Gamma(\gamma, s)=_{0}\Gamma(\eps, s) \big],\notag
\end{align}
%\begin{align}\tag{$\GH_{\ns2}^{\mathbb{a}}$}
%(\forall \Gamma^{3})&\big[(\forall^{\st}\gamma^{1}\in C)(\forall s^{0})\GH(\Gamma, \gamma, s)\di (\forall^{\st} \gamma^{1}\in C)(\forall^{\st}s^{0})(\st(\Gamma(Y,s)))\big].
%%&\wedge (\forall^{\st}\gamma^{1},\eps^{1} \in C,s^{0})(\gamma\approx_{1}\eps\di \Gamma(\gamma, s)=_{0}\Gamma(\eps, s) \big],\notag
%\end{align}
which expresses that the Gandy-Hyland functional is \emph{standard extensional} similar to $\eqref{EXT}^{\st}$ defined in Remark \ref{equ}.  Note that $\paai$ immediately yields standard extensionality $\eqref{EXT}^{\st}$ from `usual' extensionality $\eqref{EXT}$ for standard functionals of type two.  The reverse implication is again more interesting.    
\begin{theorem}\label{mukh2}
The system $\P$ proves $\GH_{\ns2}^{\mathbb{a}}\di \paai$.
%The system $\P+\QFAC^{1,0}+\STP$ proves $ \paai \di \GH_{\ns}^{\mathbb{a}}$.
\end{theorem}
\begin{proof}
As in the proof of Theorem \ref{mukh}, suppose $(\exists^{\st}\alpha_{0}^{1})(\forall^{\st}N)\gamma_{0}(\overline{\alpha}N)=0$ for some fixed standard $\gamma_{0}^{1}\in C$.
Let $\gamma^{1}\in C$ be as in \eqref{qast2} and recall that $\Gamma(\gamma, \langle\rangle)=0$ by the proof of Theorem~\ref{mukh}.  
Now define standard $\eps^{1}\in C$ as:  %in \eqref{qast2}, but with $2+\dots$ in the first case, and 
\be\label{qast333}
\eps(\sigma):=
\begin{cases}
2+ \sigma((\mu n\leq |\sigma|)\gamma_{0}(\overline{\alpha_{0}}n)\ne 0) & (\exists n\leq |\sigma|)\gamma_{0}(\overline{\alpha_{0}})\ne 0 \\
0 & \textup{otherwise}
\end{cases}.
\ee   
Note that $\eps(\alpha)=1+\alpha(m_{0})$ where $m_{0}$ is the least $m$ such that $\gamma_{0}(\overline{\alpha}m)\ne 0$.
Clearly, we have that $\gamma\approx_{1}(00\dots)\approx_{1} \eps$, while at the same time we can compute: 
\be\label{krazy}
\Gamma(\eps, \langle\rangle)=\eps(0*(\lambda n)\Gamma(\eps, n+1))=1+\Gamma(\eps, m_{0})\ne0=\Gamma(\gamma,\langle\rangle),  
\ee
where $m_{0}$ is again the least $m$ such that $\gamma_{0}(\overline{\alpha}m)\ne 0$.  
This contradiction yields $\NPC^{\a}$ and hence $\paai$ by Theorem \ref{copycatje}.  
\end{proof}
Recall the notion of `extensionality functional' from Section \ref{tempsde} and define:  
\be\tag{$\EXT^{\a}(\Xi^{2}, \Lambda^{3})$}
(\forall \gamma^{1}\in C, \eps^{1}\in C)\big(\overline{\gamma}\Xi(\gamma,\eps)=_{0}\overline{\eps}\Xi(\gamma,\eps)\di \Lambda(\gamma)=_{0}\Lambda(\eps)\big).
\ee
%Note that an extensionality functional for $\Gamma^{3}$ from $\GH^{\a}(\Gamma)$ makes sense in the presence of $\QFAC^{1,0}$.    
\begin{cor}[Term extraction VII]\label{figger2}
From the proof of $ \GH_{\ns2}^{\a}\di\paai$ in $\P$, a term $t$ can be extracted such that $\textsf{\textup{E-PA}}^{\omega*}+\QFAC^{1,0}$ proves  that
\be\label{frood1448}
(\forall \Xi^{2}, \Gamma^{3})\big[ [\GH^{\a}(\Gamma)\wedge (\forall s^{0})\EXT^{\a}(\Xi(\cdot, s), \Gamma(\cdot, s))]\di  \MU(t(\Xi, \Gamma))  \big].
\ee
\end{cor}
\begin{proof}
Analogous to the proof of Corollary \ref{figger}.  We again show that Notation \ref{riffer} does not cause problems for term extraction as in Remark \ref{firliborn}.    
First of all, the second conjunct of $\GH^{\a}_{\ns}2$ is, by Notation \ref{laaaaate} and Remark \ref{equ}:
\begin{align}
 (\forall^{\st}\gamma^{1},\eps^{1})(\forall F^{2}, H^{2})\Big[(\forall& \alpha^{1})(\gamma(\overline{\alpha}F(\alpha))>0)\wedge (\forall \alpha^{1})(\eps(\overline{\alpha}H(\alpha))>0)\label{omh} \\
 & \di (\forall^{\st}s^{0})(\exists^{\st}N)(\overline{\gamma}N=_{0}\overline{\eps}N\di \Gamma(\gamma_{F}, s)=_{0}\Gamma(\eps_{F}, s) \Big],\notag
\end{align}
where $\gamma_{F}^{2}$ is $(\lambda \alpha^{1})[\gamma(\overline{\alpha}F(\alpha))-1]$, and similar for $\eps_{F}^{2}$.
Now, \eqref{omh} can be brought into the form $ (\forall^{\st}\gamma^{1},\eps^{1}, s^{0})(\forall F^{2}, H^{2})(\exists^{\st}N)A(\gamma, \eps, F, H, N,s)$, where $A$ is internal.  Applying idealisation \textsf{I} as in Remark \ref{simply} yields: 
\be\label{finito}
 (\forall^{\st}\gamma^{1},\eps^{1}, s^{0})(\exists^{\st}N)(\forall F^{2}, H^{2})A(\gamma, \eps, F, H, N,s).  
\ee
After applying step \eqref{ted} from the template in Remark \ref{firliborn}, \eqref{finito} becomes
\[
(\exists^{\st}\Phi^{2})(\forall s^{0})(\forall \gamma^{1},\eps^{1})(\forall F^{2}, H^{2})A(\gamma, \eps, F, H, \Phi(s, \gamma, \eps),s),   
\]
which can be brought into $(\exists^{\st}\Xi^{2})(\forall s^{0})\EXT^{\a}(\Xi(\cdot,s ), \Gamma(\cdot, s))$, by Notation \ref{riffer}. 
 A normal form for the first conjunct of $\GH_{\ns2}^{\a}$ is now straightforward, while a normal form for $\paai$ is given by \eqref{nfpaai}.  The template from Remark~\ref{firliborn} is now easily seen to yield the relative computability result \eqref{frood1448}.  
%which explains the provenance of the extensionality functional in \eqref{frood1448}.  XXX explian why!!!!
\end{proof}
Following the proof of Theorem~\ref{mukh2} in detail, it becomes clear that the condition `$\Gamma$ is standard' is superfluous, implying that the term in \eqref{frood1448} only depends on $\Xi$.  

\medskip

The previous results are not satisfactory since extensionality for associates as in $\EXT^{\a}$ does not `fully' treat $\gamma^{1}, \eps^{1}\in C$ as type two functionals.  However, the proof of Theorem \ref{mukh2} does provide us with an interesting way forward;  In particular, it is easy to compute that $\Gamma(\eps, \langle\rangle)$ from \eqref{krazy} is nonstandard.  However, this means that $\Gamma$ is \emph{nonstandard for standard inputs}, while standard functionals (should) have standard output for standard input.      
%Note that the existence of an extensionality functional follows trivially from $\eqref{EXT}$ and $\QFAC^{1,0}$.  
%In other words, \eqref{frood1448} essentially expresses $(\mu^{2})$ in terms of the $\Gamma$-functional defined on associates.  
% \eqref{frood1448}.    
Thus, we are led to the final variation of $\GH_{\ns}^{\a}$:
\begin{align}\tag{$\GH_{\ns3}^{\mathbb{a}}$}
(\exists \Gamma^{3})&\big[(\forall^{\st}\gamma^{1}\in C)(\forall s^{0})\GH(\Gamma, \gamma, s)\wedge (\forall^{\st} \eps^{1}\in C)(\forall^{\st}t^{0})(\st_{0}(\Gamma(\eps,t)))\big].
\end{align}
which merely expresses that the Gandy-Hyland functional exists and is standard for standard input.  
% \emph{standard extensional} similar to $\eqref{EXT}^{\st}$ defined in Remark \ref{equ}.  
As expected, $\paai$ implies $\GH_{\ns3}^{\a}$ but
% immediately yields standard extensionality $\eqref{EXT}^{\st}$ from `usual' extensionality $\eqref{EXT}$ for standard functionals $\varphi^{2}$.  
the reverse implication is again more interesting.    
\begin{theorem}\label{mukh333}
The system $\P$ proves $\GH_{\ns3}^{\mathbb{a}}\di \paai$.
%The system $\P+\QFAC^{1,0}+\STP$ proves $ \paai \di \GH_{\ns}^{\mathbb{a}}$.
\end{theorem}
\begin{proof}  Assume $\GH_{\ns3}^{\a}$;  
As in the proof of Theorem \ref{mukh2}, suppose $(\exists^{\st}\alpha_{0}^{1})(\forall^{\st}N)\gamma_{0}(\overline{\alpha}N)=0$ for some fixed standard $\gamma_{0}^{1}\in C$, and define standard 
%Let $\gamma^{1}\in C$ be as in \eqref{qast2} and recall that $\Gamma(\gamma, \langle\rangle)=0$ by the proof of Theorem~\ref{mukh}.  
$\eps^{1}\in C$ as in \eqref{qast333}.  % but with $2+\dots$ in the first case. 
Again note that $\eps(\alpha)=1+\alpha(m_{0})$ where $m_{0}$ is the least $m$ such that $\gamma_{0}(\overline{\alpha}m)\ne 0$.
%Working in $\P+\GH_{\ns}^{\a}$, suppose $\paai$ is false, i.e.\ there is standard $h^{1}$ such that $(\forall^{\st}n)h(n)=0$ and $(\exists m)h(m)\ne 0$.  
%Define standard $\gamma^{1}\in C$ as in \eqref{qast} and note that $\gamma(\alpha)=\alpha(m_{0})$ where $m_{0}$ is the least $m$ such that $h(m)\ne 0$.  
Now compute $\Gamma(\eps, \langle\rangle)$ as follows: $\Gamma(\eps, \langle\rangle)=\eps(0*(\lambda n)\Gamma(\eps, n+1))=1+\Gamma(\eps, m_{0})$ and 
\begin{align*}
\Gamma(\eps, m_{0})=\eps(m_{0}*0*(\lambda n)\Gamma(\eps, m_{0}*(n+1)))=1+\Gamma(\gamma, m_{0}*(m_{0}-1))
\end{align*}
Similarly, we have $\Gamma(\eps, m_{0}*(m_{0}-1))=1+\Gamma(\eps, m_{0}*(m_{0}-1)*(m_{0}-2))$, and hence $\Gamma(\eps, \langle\rangle)=2+\Gamma(\eps, m_{0}*(m_{0}-1)*(m_{0}-2))$.   Ultimately, we obtain 
\be\label{seealso}
\Gamma(\eps, \langle\rangle)=\dots=(m_{0}-1)+\Gamma(\eps, m_{0}*(m_{0}-1)*\dots* 1), %=m_{0}+\eps\big(m_{0}*(m_{0}-1)*\dots* 1*0* (\lambda n)(\dots)\big),
\ee
by applying the same procedure $m_{0}-1$ times.  However, $\Gamma(\eps, \langle\rangle)$ is thus nonstandard, and this contradiction yields $\NPC^{\a}$, and $\paai$ follows by Theorem \ref{copycatje}.  
 % and note that $\gamma\approx_{1} \eps$ but $\Gamma(\eps, \langle\rangle)=1+\dots\ne0$.  This contradiction yields $\NPC^{\a}$ and hence $\paai$ by Theorem \ref{copycatje}.  
\end{proof}
%Recall the notion of `extensionality functional' from Section \ref{tempsde}.  Note that an extensionality functional for $\Gamma^{3}$ from $\GH_{\ns2}^{\a}$ makes sense in the presence of $\QFAC^{1,0}$.    
\begin{cor}[Term extraction VIII]\label{figger333}
From the proof of $ \GH_{\ns3}^{\a}\di\paai$ in $\P$, a term $t$ can be extracted such that $\textsf{\textup{E-PA}}^{\omega*}+\QFAC^{1,0}$ proves  that
\be\label{frood14499}
(\forall  \Gamma^{3}, \Xi^{2})\Big[\big[ \GH^{\a}(\Gamma) \wedge (\forall \gamma^{1}\in C, s^{0})\big(\Xi(\gamma, s)=_{0}\Gamma(\gamma, s)\big)\big]\di  \MU(t(\Xi))  \big].
\ee
\end{cor}
\begin{proof}
Analogous to Corollary \ref{figger} and \ref{figger2}.  %WITH the notation as in Notation \ref{riffer}
We show that Notation~\ref{riffer} does not cause problems for term extraction as in Remark \ref{firliborn}.    
%In particular, we perform the template in detail.  
%We first provide a normal form for $\GH_{\ns3}^{\a}$.  
First of all, the second conjunct of $\GH_{\ns3}^{\a}$ is:
\[
(\forall^{\st}\eps^{1})(\forall F^{2})\Big[(\forall \alpha^{1})(\eps(\overline{\alpha}F(\alpha))>0) \di   (\forall^{\st} t^{0})(\exists^{\st}n^{0})( n=\Gamma(\eps_{F},t))\Big],
\]
where $\eps_{F}^{2}$ is $(\lambda \alpha^{1})[\eps(\overline{\alpha}F(\alpha))-1]$.  Push all standard quantifiers outside:
\[
(\forall^{\st}\eps^{1}, t^{0})(\forall F^{2})(\exists^{\st}n^{0})\Big[(\forall \alpha^{1})(\eps(\overline{\alpha}F(\alpha))>0) \di  ( n=\Gamma(\eps_{F},t))\Big],
\]
and apply idealisation \textsf{I} to obtain: 
\[
(\forall^{\st}\eps^{1}, t^{0})(\exists^{\st}m)(\forall F^{2})(\exists n^{0}\leq m)\Big[(\forall \alpha^{1})(\eps(\overline{\alpha}F(\alpha))>0) \di  ( n=\Gamma(\eps_{F},t))\Big],
\]
which is a normal form, which we abbreviate $(\forall^{\st}\eps^{1}, t^{0})(\exists^{\st}m)A(\eps, t, m, \Gamma)$, where $A$ is internal.  
%Applying $\HAC_{\INT}$ to the previous (as in Remark \ref{simply}), there is $\Xi^{2}$  
A normal form for $\paai$ is \eqref{nfpaai}, which we abbreviate by $(\forall^{\st}f^{1})(\exists^{\st}n^{0})B(f, n)$, where $B$ is internal.  
Hence, $ \GH_{\ns3}^{\a}\di\paai$ implies:
\[
\Big[(\exists \Gamma^{3})\big[ \GH^{\a}(\Gamma) \wedge (\forall^{\st}\eps^{1}, t^{0})(\exists^{\st}m)A(\eps, t, m, \Gamma) \big]\Big]\di (\forall^{\st}f^{1})(\exists^{\st}n^{0})B(f, n),
\]
by strengthening the antecedent (by dropping `st' in the first conjunct of $\GH_{\ns3}^{\a}$).  
Now introduce a standard functional $\Xi^{2}$ as follows: 
\[
(\forall^{\st}\Xi^{2})(\forall \Gamma^{3})\Big[\big[ \GH^{\a}(\Gamma) \wedge (\forall^{\st}\eps^{1}, t^{0})A(\eps, t, \Xi(\eps, t), \Gamma) \big]\di (\forall^{\st}f^{1})(\exists^{\st}n^{0})B(f, n)\Big],
\]
and drop the remaining `st' \emph{in the antecedent} to yield:
\[
(\forall^{\st}\Xi^{2})(\forall \Gamma^{3})\Big[\big[ \GH^{\a}(\Gamma) \wedge (\forall \eps^{1}, t^{0})A(\eps, t, \Xi(\eps, t), \Gamma) \big]\di (\forall^{\st}f^{1})(\exists^{\st}n^{0})B(f, n)\Big].
\]
Push outside the standard quantifiers (as far as possible) to obtain
\[
(\forall^{\st}\Xi^{2},  f^{1})(\forall \Gamma^{3})(\exists^{\st}n^{0})\Big[\big[ \GH^{\a}(\Gamma) \wedge (\forall \eps^{1}, t^{0})A(\eps, t, \Xi(\eps, t), \Gamma) \big]\di B(f, n)\Big],
\]
to which we apply idealisation \textsf{I} (as in Remark \ref{simply}) to obtain
\[
(\forall^{\st}\Xi^{2}, f^{1})(\exists^{\st}n^{0})(\forall \Gamma^{3})\Big[\big[ \GH^{\a}(\Gamma) \wedge (\forall \eps^{2}, t^{0})A(\eps, t, \Xi(\eps, t), \Gamma) \big]\di B(f, n)\Big].  
\]
Applying Corollary \ref{consresultcor} now yields a term $u$ such that $\textsf{E-PA}^{\omega*}+\QFAC^{1,0}$ proves 
\[
(\forall \Xi^{2}, f^{1})(\exists n^{0}\in u(\Xi, f))(\forall \Gamma^{3})\Big[\big[ \GH^{\a}(\Gamma) \wedge (\forall \eps^{1}, t^{0})A(\eps, t, \Xi(\eps, t), \Gamma) \big]\di B(f, n)\Big].  
\]
Now define $t(\Xi, f):= \max_{i<|u(\Xi, f)|}u(\Xi, f)(i)$ and note that we have 
\[
(\forall \Xi^{2}, f^{1}, \Gamma^{3})\Big[\big[ \GH^{\a}(\Gamma) \wedge (\forall \eps^{1}, t^{0})A(\eps, t, \Xi(\eps, t), \Gamma) \big]\di B(f, t(\Xi, f))\Big],   
\]
due to the monotone behaviour of $B(f, \cdot)$.  Furthermore, `$(\forall f^{1})$' can be pushed inside to obtain that
\be\label{korkkk}
(\forall \Xi^{2}, \Gamma^{3})\Big[\big[ \GH^{\a}(\Gamma) \wedge (\forall \eps^{1}, t^{0})A(\eps, t, \Xi(\eps, t), \Gamma) \big]\di \MU(t(\Xi))\Big].
\ee
Finally, we note that $(\forall \eps^{1}, t^{0})A(\eps, t, \Xi(\eps, t)$ is implied by $(\forall \eps^{1}\in C, t^{0})(\Xi(\eps, t)=\Gamma(\eps, t))$ by Notation \ref{riffer}.  
Thus, \eqref{korkkk} implies \eqref{frood14499}, and we are done.  
\end{proof}
In conclusion, \eqref{frood14499} expresses that a term from G\"odel's $\textsf{T}$ yields arithmetical comprehension as in $(\mu^{2})$ from any functional $\Xi^{2}$ which computes the values of the Gandy-Hyland functional $\Gamma$ defined on associates.  Thus, it can be said that \eqref{frood14499} is the syntactic version of (the forward implication of) the theorem \textsf{(LN)} as in \cite{longmann}*{Theorem 9.5.4, p.\ 460}.  Of course, the latter theorem is formulated with partial functionals, while all functionals in $\P$ are total.  We show in the next section that $\P$ can `simulate' partiality \emph{relative to the standard world};  We also argue that this `standard partiality' explains the results in this section.

\section{Concluding remarks}
In this paper, we have shown that certain theorems from Nonstandard Analysis give rise to (effective) relative computability results.  
This resonates nicely with the longstanding (but speculative) claim that Nonstandard Analysis is somehow `constructive' or `effective', captured well by the quote:  
\begin{quote}
It has often been held that nonstandard analysis is highly non-constructive, thus somewhat suspect, depending as it does upon the ultrapower construction to produce a model [\dots] On the other hand, nonstandard \emph{praxis} is remarkably constructive; having the extended number set we can proceed with explicit calculations. (Emphasis original: \cite{NORSNSA}*{p.\ 31})
\end{quote}
Similar observations regarding the `constructive or effective content of Nonstandard Analysis' are made in numerous places; 
An incomplete list may be found in \cite{sambon}*{\S1}.  The results in this paper can be said to make the aforementioned speculative claim 
regarding the effective content of Nonstandard Analysis \emph{more concrete}.     
%\cite{watje, rosse,kifar, kieken, Oss3, Oss2, sc, nsawork2, venice,fath, jep}.  

\medskip

By contrast, the following final remark is somewhat vague and speculative, but partially explains the connection between the totality of the Gandy-Hyland functional mentioned in \cite{longmann}*{Theorem~9.5.4, p.\ 460} and Corollary \ref{figger333}.
\begin{rem}[Partiality in $\P$]\rm
The class of \emph{partial computable functions} is a central object of study in computability theory (\cite{zweer}*{I.2.2}).  
As discussed in the latter, there are good reasons to study partial functions. We now discuss how $\P$ can accommodate partial functionals, 
\emph{despite all functionals being total}.  Intuitively speaking, we show that a total computable function \emph{with standard index} can output nonstandard numbers for standard input (after running for nonstandard many steps).  Such a total function may rightly be called `not total from the point of view of the standard world' in view of the basic axioms of $\P$.  More formally:     

\medskip

First of all, consider the well-known predicate `$\varphi^{A}_{e, s}(n)=m$' which intuitively states that: `the $e$-th Turing machine with oracle set $A$ and input $n$ halts after $s$ steps with output $m$' (\cite{zweer}*{Def.\ 3.8}).  Now let $e_{0}, x_{0}$ be \emph{standard} numbers and $A$ a \emph{standard} set such that $(\exists s^{0}, m^{0})[\varphi^{A}_{e_{0},s}(x_{0})=m]$, i.e.\ we say that `$\varphi^{A}_{e_{0}}(x_{0})$' is defined in the usual computability-theoretic terminology.  

\medskip

Secondly, the basic axioms of $\P$ in Definition \ref{debs} guarantee that every standard functional evaluated at a standard input returns a standard output.
By contrast, \emph{without the presence of} $\paai$, $\varphi^{A}_{e_{0}}(x_{0})$ as defined above\footnote{Assume $\neg\paai$ and let $h^{1}$ be as in the proof of Theorem \ref{copycatje};  Define $e_{0}$ as the (standard) code of the program which tests if the input $x_{0}$ satisfies $x_{0}\in A:=\{n : h(n)\ne 0\}$ and outputs $x_{0}$ if so, and repeats the previous step for $x_{0}+1$ otherwise.  Then $\varphi^{A}_{e_{0}, m_{0}}(x_{0})=m_{0}$, if $m_{0}$ is the least number such that $h(m_{0})\ne 0$, while the inputs $A, e_{0}, x_{0}$ are standard.\label{hagggot}} \emph{may well be nonstandard}.  In other words, while $\varphi^{A}_{e_{0}}(x_{0})$ is defined and all inputs are standard, the $e_{0}$-th Turing machine may well take a nonstandard number $s$ of steps to halt, with a nonstandard output $m$, as discussed in Footnote \ref{hagggot}.  

\medskip

Thirdly, in light of the previous, we are led to the following definition:  For standard $e_{0}, x_{0}, A$, 
we say that `$\varphi^{A}_{e_{0}}(x_{0})$ is \emph{standard-defined}' if $(\exists^{\st} s^{0}, m^{0})[\varphi^{A}_{e_{0},s}(x_{0})=m]$, and `standard-undefined' otherwise.  Similarly, for standard $e_{0}, A$, we say that `$\varphi_{e_{0}}^{A}$ is standard-total' if we have $(\forall^{\st}x_{0})(\exists^{\st} s^{0}, m^{0})[\varphi^{A}_{e_{0},s}(x_{0})=m]$, and `standard-partial' otherwise.  Hence, define $\psi_{e}^{A}$ as follows for fixed $M\in \Omega$:
\[
\psi_{e}^{A}(x):=
\begin{cases}
 \varphi_{e}^{A}(x) &(\exists s^{0}, m^{0}\leq M)[\varphi_{e,s}^{A}(x)=m]  \\  
M+1  & \textup{otherwise}
\end{cases}
\]
By definition, $\psi_{e}^{A}$ is total \emph{but not standard-total} in the presence of $\neg\paai$ by 
Footnote~\ref{hagggot}.  %For standard $e$, $\psi_{e}^{A}$ is called \emph{standard-partial} if it is not standard total.    
Hence, we can in fact \emph{simulate} the concept of partiality inside $\P$ by exploiting the dichotomy between `standard' and `nonstandard'.   
Similar definitions are possible for higher-type functionals.  
\end{rem}
Finally, we arrive at the motivation for the definitions in the previous remark:  Consider the standard associates $\gamma^{1}\in C$ and $\eps^{1}\in C$ as in \eqref{qast} and \eqref{qast333};  To compute $\gamma(\alpha)$ at standard $\alpha^{1}$, one simply evaluates $\gamma(\overline{\alpha}0)$, $\gamma(\overline{\alpha}1)$, et cetera, until $N^{0}$ is found such that $\gamma(\overline{\alpha}N)>0$, and the same for $\eps^{1}\in C$.  This computation always terminates by the definition of $\eps^{1}\in C$ and $ \gamma^{1}\in C$.  However, in the presence of $\neg\paai$, this computation only terminates after a \emph{nonstandard} number of steps, i.e.\ $\gamma^{1}\in C$ and $\eps^{1}\in C$ are `standard-partial' in the above sense.  However, $\NPC^{\a}$ implies that every standard $\gamma^{1}\in C$ is standard-total (as it is nonstandard continuous), and therefore $\paai$ follows from $\NPC^{\a}$;  In fact, we have an equivalence by Theorem \ref{copycatje}.     
In short, $\NPC^{\a}$ guarantees that every standard associate is standard-total, which apparently requires $\paai$, and the latter becomes $(\mu^{2})$ after term extraction by Corollary \ref{thelastsheeversawofhim}.  

\medskip

Furthermore, assuming that the $\Gamma$-functional has its usual defining property $\GH^{\a}(\Gamma)$ on associates, we observe that given $\neg\paai$, the number $\Gamma(\eps, \langle\rangle)$ is nonstandard, although $\eps^{1}\in C$ and $\langle\rangle$ are standard inputs, i.e.\ $\Gamma$ is also `standard partial'  (See the proof of Theorem \ref{mukh333} for these results).   
However, $\GH_{\ns3}^{\a}$ guarantees that there is a \emph{standard-total} Gandy-Hyland functional defined on associates, and therefore $\paai$ follows, as in Theorem~\ref{mukh333}.  In short, $\GH_{\ns3}^{\a}$ guarantees that the Gandy-Hyland functional is standard-total for standard associates and standard sequences, which apparently requires $\paai$.     

\medskip

In conclusion, we have introduced the notion of `standard partiality' which allows $\P$ to accommodate the fundamental notion of `partial function(al)'.  
We have observed that $\NPC^{\a}$ as in Theorem \ref{copycatje} and $\GH_{\ns3}^{\a}$ as in Theorem~\ref{mukh333} can be viewed as principles guaranteeing the standard-totality of (functionals defined on) standard associates.  It is an interesting question whether we can fruitfully translate other theorems from computability theory regarding partial function(al)s.

\section*{Acknowledgements}
This research was sponsored by the John Templeton Foundation, the FWO Flanders, the University of Oslo, and the Alexander von Humboldt Foundation.  The author is grateful to these institutions for their support.  This work was done partially while the author was visiting the Institute for Mathematical Sciences, National University of Singapore in 2016. The visit was supported by the Institute.  
The author thanks Dag Normann and Paulo Oliva for their valuable advice.  Finally, the referees of this paper deserve thanks for their many helpful suggestions.     

\section*{References}
%\begin{thebibliography}{1000}
%
%\end{thebibliography}
%\begin{bibdiv}
\begin{biblist}
\bibselect{allkeida}
\end{biblist}
%\end{bibdiv}

%\section{Proofs}
% that's all folks
\end{document}